\newcommand{\nc}{\newcommand}
 \nc{\cl}{\centerline}
 \nc{\SL}{{\rm SL}}
 \nc{\Stind}{{\rm Stind}}
 \nc{\len}{{\rm len}}
 \nc{\Comod}{{\rm Comod}}
 \nc{\Diag}{{\rm Diag}}
 \nc{\comod}{{\rm comod}}
  \nc{\Am}{{\rm Am}}
    \nc{\Cosat}{{\rm Cosat}}
 \nc{\hatQ}{{\hat Q}}
 \nc{\sgn}{{\rm sgn}}
 \nc{\ad}{{\rm ad}}
  \nc{\idx}{{\rm index}}
 \nc{\Mat}{{\rm Mat}}
 \nc{\trace}{{\rm trace}}
 \nc{\Tor}{{\rm Tor}}
  \nc{\Ann}{{\rm Ann}}
  \nc{\Grot}{{\rm Grot}}
  \nc{\Ker}{{\rm Ker}}
  \renewcommand{\Im}{{\rm Im}}
  \nc{\calA}{{\mathcal A}}
  \nc{\A}{{\mathcal A}}
    \nc{\B}{{\mathcal B}}
      \nc{\C}{{\mathcal C}}
           \nc{\DD}{{\mathcal D}}
 \nc{\Loewy}{{\rm Loewy}}
 \nc{\Supp}{{\rm Supp}}
 \nc{\env}{{\rm env}}
 \nc{\wt}{{\rm wt}}
 \nc{\poll}{{\rm \l}}
 \nc{\efp}{{\Bbb F}_p}
\nc{\baru}{{\overline u}}
\nc{\baralpha}{{\overline \alpha}}
\nc{\bargamma}{{\overline \gamma}}
\nc{\barA}{{\bar A}}
\nc{\barq}{{\overline q}}
\nc{\barQ}{{\overline Q}}
\nc{\bara}{{\overline a}}
\nc{\barx}{{\overline x}}
\nc{\bary}{{\overline y}}
\nc{\barf}{{\overline f}}
 \nc{\Orb}{{\rm Orb}}
 \nc{\Cent}{{\rm Cent}}
 \nc{\gothg}{{\frak g}}
 \renewcommand\implies{$\Rightarrow$}
 \nc{\hatsigma}{{\hat \sigma}}
 \nc{\hatpi}{{\hat \pi}}
  \nc{\hatzeta}{{\hat \zeta}}
 \nc{\Ocal}{{\mathcal O}}
 \nc{\M}{\mathfrak{m}}
 \nc{\seee}{\mathbb C}
 \nc{\varleq}{\preccurlyeq}
 \newcommand{\id}{{\rm id}}
 \nc{\hatlambda}{{\hat\lambda}}
 \nc{\hatphi}{{\hat \phi}}
 \nc{\daggerlambda}{{\lambda^\dagger}}
    \nc{\barr}{{\bar r}}
  \nc{\bart}{{\bar t}}
    \nc{\barsigma}{{\bar \sigma}}
      \nc{\barB}{{\bar  B}}
      \newcommand{\bard}{{\bar  d}}
    \newcommand{\barG}{{\bar G}}
     \newcommand{\barT}{{\bar T}}
  \newcommand{\resp}{{resp.\,}}
  \newcommand{\Spec}{{\rm Spec}}
   \newcommand{\op}{{\rm op}}
   \nc\blank{-}
      \newcommand{\sharpF}{{F^\sharp}}
\nc\diag{{\rm diag}}
\nc{\barpi}{{\bar \pi}}
\nc{\barC}{{\bar C}}
\nc{\barLambda}{{\bar \Lambda}}
\nc{\hatR}{{\hat R}}
\nc{\barphi}{{\bar \phi}}
\renewcommand{\vert}{{\,|\,}}
\nc{\hatL}{{\hat L}}
\nc{\barE}{{\bar   E}}
\nc{\D}{{\mathcal D}}
\nc{\E}{{\mathcal E}}
\nc{\F}{{\mathcal F}}
\nc{\FF}{{\mathcal F}}
\nc{\I}{{\mathcal I}}
\nc{\calP}{{\mathcal P}}
\nc{\even}{{\rm e}}
\nc{\ep}{\epsilon}
\nc{\odd}{{\rm o}}
\nc{\Coker}{{\rm Coker}}
\nc{\olE}{{\overline E}}
\nc{\indBG}{{\rm ind}_B^G\,}
\nc{\indHG}{{\rm ind}_H^G\,}
\nc{\que}{{\mathbb Q}}
\nc{\barlambda}{{\bar\lambda}}
\nc{\barmu}{{\bar\mu}}
\nc{\barnu}{{\bar\nu}}
\nc{\bartau}{{\bar\tau}}
\nc{\barm}{{\overline  m}}
\nc{\divind}{{\rm div.ind}}
\nc{\tl}{{\tilde{\lambda}}}
\nc{\dar}{\downarrow}
\nc{\barnabla}{{\overline \nabla}}
\nc{\en}{{\mathbb N}}
\nc{\eno}{{\mathbb N}_0}
\renewcommand{\det}{{\rm det}}
\nc{\Sym}{{\rm Sym}}
\nc{\Symm}{{\rm Sym}}
\newcommand{\q}{\quad}
\newcommand{\de}{\delta}
\renewcommand{\mod}{{\rm mod}}
\newcommand{\iso}{\cong}
\newcommand{\bs}{\bigskip}
\renewcommand{\vert}{\,|\,}
\renewcommand{\sgn}{{\rm sgn}}
\renewcommand{\vert}{\,|\,}
 \newcommand{\tbw}{\textstyle\bigwedge}
\newcommand{\zed}{{\mathbb Z}}
\newcommand{\Ext}{{\rm Ext}}
\newcommand{\End}{{\rm End}}
\newcommand{\Hom}{{\rm Hom}}
\newcommand{\cf}{{\rm cf}}
\renewcommand{\mod}{{\rm mod}}
\nc{\barb}{{\overline b}}
\newcommand{\m}{{\mathfrak m}}
\renewcommand{\mod}{{\rm{mod}}}
\nc{\geom}{{\rm geom}}
\nc{\rep}{{\rm rep}}
\newtheorem{definition}{Definition}[section]
\newtheorem{proposition}[definition]{Proposition}
\newtheorem{theorem}[definition]{Theorem}
\newtheorem{lemma}[definition]{Lemma}
\newtheorem{corollary}[definition]{Corollary}
\newtheorem{remark}[definition]{Remark}
\begin{document}

\newpage


\cl{\bf On the conjugation action for quantum general linear groups}

\bigskip

\cl{Stephen  Donkin}

\cl{\it Department of Mathematics }

\cl{\it University of York }

\cl{\it Heslington}

\cl{\it York YO10 5DD,  England UK}

\bigskip

\cl{5 September  2022}

\bs\bs\bs

\bf Abstract  \q \rm  We consider the conjugation action of a quantum group  $G$ over an arbitrary field $k$. In particular we consider the coordinate algebra $k[G(n)]$  of the quantised general linear group $G(n)$, at an arbitrary non-zero parameter $q$, as a $G(n)$-module  and give analogues of results of Kostant and Richardson.  We also consider the case in which $G$ is a product of quantum general linear groups and  the problem of describing the conjugation invariants of $k[G]$ for the action of a quantum subgroup. This is approached  via finite dimensional sub-coalgebras of $k[G]$ 
and  the theory of quasi-hereditary algebras.

\bs

\section{Introduction}

\q\rm Domokos and Lenagan, \cite{DLA},  show that the algebra of polynomial invariants for the conjugation action of a  quantum general linear group  on quantum matrices of degree $n$, over a field of characteristic $0$ at parameter $q$ not a root of unity, is a commutative  polynomial algebra in $n$ variables. Further results  are given in \cite{DLB}. 

\q We consider this situation and also other aspects of the
conjugation representation of a quantum linear  group, as constructed in \cite{DD}. We prove that the algebra of polynomial invariants, for arbitrary $k$ and $0\neq q\in k$, is a commutative  polynomial algebra. We  consider the full algebra of invariants $C(G)$ of the coordinate algebra of a quantum general linear group $G$ and describe, Theorem 7.4,  a $(C(G),G)$-module filtration of the coordinate algebra $k[G]$. This is an analogue of the main result of  \cite{DonkinO} (which is itself an analogue of results of Richardson, \cite[Theorem A]{Richardson} and Kostant, \cite[Theorem 0.13]{Kostant}).  We treat only the case in which $G$ is a quantum general linear group, though it is clear that many of the arguments and results are valid also  for the coordinate algebras of quantized enveloping algebras of semisimple complex Lie algebras.

    \q One of the basic  features of Steinberg's theory of conjugacy classes of a semisimple, simply connected group $G$   (see, for example,   \cite{Steinberg})  is the description of the algebra of class functions $C(G)$ as the subalgebra of the coordinate algebra of $G$ spanned  by the trace functions on rational $G$-modules. In \cite{DonkinQ}, we extended this to give a description of the algebra $C(G,H)$ of regular functions  on $G$ that are class functions relative to a subgroup $H$ which enjoys a certain property which we call saturation. We give, as an investment for the future,  the  $q$ analogue of the more general result concerning the algebra of relative invariants indicated above. The argument that we give, in order to give a description of $C(G,H)$ in terms of trace functions, is a local one, using the structure of finite dimensional subcoalgebras of the coordinate algebra of $G$. With accessibility to a wider audience in mind, and for the sake of variety, we have
  chosen to present this by first dealing with the analogous properties for finite dimensional algebras and dualizing. Sections  2 and 3 are   thus devoted to a version for finite dimensional algebras (satisfying certain conditions) of the general result on $C(G,H)$, \cite[Section 1, Theorem]{DonkinQ}, with particular emphasis on quasi-hereditary algebras in Section 4.  In Section 5 we transfer to the coalgebra setting.   We apply the techniques developed so far to give, in Section 6, a description of the algebra of relative invariants $C(G,H)$ in terms of shifted trace functions. In Section 7 we concentrate on the case $G=H$ and give the results on $(C(G),G)$-module filtrations already mentioned.

   \bs\bs\bs\bs



\section{Constrained algebras}

\q Let $k$ be a field. We write $\dim_k V$, or simply $\dim V$, for the dimension of a finite dimensional vector space $V$ over $k$. 

\q Let $S$ be a finite dimensional $k$-algebra which we assume to be   split (or Schurian), i.e., that  $\End_S(L)=k$, for every simple $S$-module $L$. 

\q  We write $\mod(S)$ for the category of finite dimensional left $S$-modules and write $V\in\mod(S)$ to indicate that $V$ is a finite dimensional left $S$-module.  

An $S$-module will be assumed to be a left module and finite dimensional if no other indication is given.

\q We write $J(S)$ for the Jacobson radical of $S$ and write $[S,S]$ for the $k$-span of all commutators $[x,y]=xy-yx$, $x,y\in S$.

\begin{definition} We say that $S$ is (radically) constrained if $J(S)\subseteq  [S,S]$. 
\end{definition}

\q Note that if $S$ is a constrained algebra  and $I$ is an ideal of $S$ then 
$$J(S/I)=(J(S)+I)/I\subseteq ([S,S]+I)/I =  [S/I,S/I]$$
and so:
\bs

(1)\sl \q A factor algebra of a constrained algebra is constrained. 

\bs\rm

\q We write $V^*$ for the linear dual of a vector space $V$. For a subspaces  $H$ of $V$ and $L$ of $V^*$ we write $H^\perp$ and $L^\perp$  for the perpendicular subspaces of $V^*$ and $V$.   Thus  
$$H^\perp=\{\alpha\in V^*\vert \alpha(v)=0 \hbox{ for all } v\in H\}$$
 and  
 $$L^\perp=\{v\in V\vert \alpha(v)=0 \hbox{ for all } \alpha\in H\}.$$

\q For an $S$-module  $V$ and $s\in S$, we write $\trace(s,V)$ for the trace of $\rho(s)$, where $\rho:S\to\End_k(V)$ is the representation afforded by $V$. We have  the natural character $\chi_V\in S^*$ given by $\chi_V(s)=\trace(s,V)$, $s\in S$.  

\q We write $C(S)$ for $[S,S]^\perp$ and $C_0(S)$ for the span of all  natural characters  $\chi_V$, $V\in\mod(S)$.  Note that an element $f$  of $C_0(S)$ vanishes on nilpotent elements (since the trace of a nilpotent endomorphism is $0$)  and hence $f$  vanishes on $J(S)$. Moreover, all elements of  $C_0(S)$ vanish on $[S,S]$ (since $\trace(xy,V)=\trace(yx,V)$, for   $x,y\in S$) and so $C_0(S)\subseteq C(S)$.   The space  $C_0(S)$ has $k$-basis $\chi_1,\ldots,\chi_n$, where $\chi_r=\chi_{V_r}$, $1\leq r\leq n$, and $V_1,\ldots,V_n$ is a complete set of pairwise non-isomorphic simple left $S$-modules. In particular $C_0(S)$ has dimension $l(S)$, the number of isomorphism types of simple left $S$-modules. 
Thus, writing $\Grot(S)$ for the Grothendieck group of $\mod(G)$,  defined by short exact sequences:

\bs

(2) \sl\q We have a $k$-linear isomorphism  $k\otimes_\zed \Grot(S)\to C_0(S)$, taking $1\otimes [V]$ to $\chi_V$ (where $[V]\in \Grot(S)$ denotes the class of $V\in\mod(S)$).

\rm

\begin{lemma} The following are equivalent:

(i) $S$ is constrained;

(ii) $\dim S/[S,S]= l(S)$;

(iii)  $\dim S/[S,S]\leq  l(S)$;

(iv) $[S,S]=C_0(S)^\perp$.
\end{lemma}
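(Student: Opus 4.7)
The strategy is to locate $[S,S]$ inside the chain $[S,S] \subseteq [S,S]+J(S) \subseteq C_0(S)^\perp$ and show that the two outer terms both have codimension exactly $l(S)$ in $S$. Once this is done the four conditions collapse into one another by elementary linear algebra.

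For the baseline equality $\dim_k S/([S,S]+J(S)) = l(S)$, set $\bar S = S/J(S)$. Since $[\bar S, \bar S] = ([S,S]+J(S))/J(S)$, this reduces to $\dim \bar S/[\bar S,\bar S] = l(S)$. Because $S$ is split, Wedderburn gives $\bar S \cong \prod_{r=1}^{l(S)} M_{n_r}(k)$, and for each factor a direct computation on matrix units shows $[M_n(k), M_n(k)] = \ker(\trace)$, of codimension $1$ (valid in any characteristic); summing over the $l(S)$ blocks yields the claim. For the outer inclusion, (2) of the excerpt gives $\dim_k C_0(S) = l(S)$, and the observation in the text that each $\chi_V$ vanishes on both $[S,S]$ and on $J(S)$ yields $C_0(S) \subseteq ([S,S]+J(S))^\perp$, so $[S,S]+J(S) \subseteq C_0(S)^\perp$ and $\dim_k S/C_0(S)^\perp = l(S)$. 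Comparing codimensions forces $[S,S]+J(S) = C_0(S)^\perp$ and gives the always-true inequality $\dim_k S/[S,S] \geq l(S)$.

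The equivalences are now immediate. For (i) $\Leftrightarrow$ (ii): $J(S) \subseteq [S,S]$ iff $[S,S] = [S,S]+J(S)$ iff $\dim_k S/[S,S] = l(S)$. For (ii) $\Leftrightarrow$ (iii): the nontrivial direction follows from $\dim_k S/[S,S] \geq l(S)$. For (ii) $\Leftrightarrow$ (iv): since $[S,S] \subseteq C_0(S)^\perp$ with $\dim_k S/C_0(S)^\perp = l(S)$, the two subspaces coincide precisely when $\dim_k S/[S,S] = l(S)$. The only substantive input is the Wedderburn-based computation of $\dim \bar S/[\bar S,\bar S]$; everything else is perpendicular bookkeeping, and I do not expect any real obstacle.
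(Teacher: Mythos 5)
Your proof is correct and uses essentially the same ingredients as the paper's: the Wedderburn computation showing $[\bar S,\bar S]$ has codimension $l(S)$ in $\bar S=S/J(S)$, and the fact that $\dim C_0(S)=l(S)$ with $C_0(S)$ annihilating both $[S,S]$ and $J(S)$. The only difference is organizational: the paper runs a cycle (i)$\Rightarrow$(ii)$\Rightarrow$(iii)$\Rightarrow$(iv)$\Rightarrow$(i), whereas you first establish the unconditional identity $[S,S]+J(S)=C_0(S)^\perp$ (with codimension $l(S)$) and then read off each equivalence directly; this is a modest cleanup rather than a genuinely different route.
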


\begin{proof}  (i)\implies (ii). From $J(S)\subseteq [S,S]$ we get   \\ $[S/J(S),S/J(S)]=[S,S]/J(S)$. However, $S/J(S)$ is a direct sum of matrix algebras and it follows that 
$[S/J(S),S/J(S)]$ has codimension $l(S)$ in $S/J(S)$. We obtain 
$$\dim\,  [S/J(S),S/J(S)]+l(S)=\dim S/J(S)$$
 so that 
$$\dim [S,S]- \dim J(S)+l(S)=\dim S-\dim J(S)$$
 and therefore $\dim [S,S]+l(S)=\dim S$ and $\dim S/[S,S]= l(S)$. 

(ii)\implies (iii). Clear.

(iii)\implies (iv). Since every element of $C_0(S)$ is zero on  $[S,S]$ we have $[S,S]\subseteq  C_0(S)^\perp$. Hence  $C_0(S)$ embeds in $(S/[S,S])^*$ and so
$$\dim S/[S,S]\geq \dim C_0(S)=l(S)$$
and so $\dim S/[S,S]=l(S)$. Thus $\dim S/[S,S]=\dim C_0(S)$ and $[S,S]\subseteq C_0(S)$ so that $[S,S]=C_0(S)^\perp$.

(iv)\implies (i) We have $J(S)\subseteq  C_0(S)^\perp=[S,S]$. 
\end{proof}

\bs

(3) \sl\q From  (2) we get that if $S$ is constrained and $V_1,\ldots,V_n$ is a complete set of pairwise non-isomoprhic irreducible $S$-module then their natural characters $\chi_1,\ldots,\chi_n$ form a basis of $C(S)$.

\bs\rm

\q Given an $S$-bimodule $M$ we write $[S,M]$ for the subspace of $M$ spanned by all element $sm-ms$, with $s\in S$, $m\in M$. We write $H_i(S,M)$ for the degree $i$ Hochschild homology. Thus $H_0(S,M)=M/[S,M]$ and from Lemma 2.2 we have:
\bs

(4) \sl\q $S$ is constrained if and only if $H_0(S,S)$ has dimension at most (equivalently equal to) $l(S)$.

\bs

\q\rm Moreover, from the Morita invariance property for Hochschild homology (see e.g., \cite[Theorem 9.5.6]{Weibel})  we have:
\bs

(5)\sl\q\q Suppose that $S'$ is a finite dimensional $k$-algebra Morita equivalent to $S$. Then $S'$ is split  and $S'$ is constrained if and only if $S$ is constrained.
\bs\rm

(One may also see this directly by considering $R/[R,R]$ for an algebra $R$ of the form $\End_S(S\oplus Se)$, where $e\in S$ is an idempotent.)

\bs

(6) \q\sl If $K$ is an extension field of $k$ then the algebra $S_K=K\otimes_k S$, obtained by base change, is constrained if and only if $S$ is constrained.

\bs\rm

\q This follows from the fact that $l(S)=l(S_K)$ and the natural map $K\otimes_k [S,S]\to [S_K,S_K]$ is an isomorphism.

\bs

We write $\Cent(R)$ for the centre of a ring $R$.

\begin{lemma} (i) If $S$ is constrained and $R$ is a symmetric, factor algebra of $S$ then $R$ is semisimple.

(ii) If $S$ is a symmetric algebra that is not semisimple  then the dimension of the centre  $\Cent(S)$ of $S$ is greater than $l(S)$.
\end{lemma}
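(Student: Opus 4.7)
The plan is to prove (ii) directly and then obtain (i) as a one-line corollary.

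For (ii), I would fix a non-degenerate, associative, symmetric bilinear form $\langle -,-\rangle$ witnessing that $S$ is a symmetric algebra, and let $\phi\colon S\to S^*$ be the $S$-bimodule isomorphism $\phi(x)=\langle x,-\rangle$. Using only associativity ($\langle xy,z\rangle=\langle x,yz\rangle$) and symmetry of the form, one checks that $\phi(x)$ annihilates $[S,S]$ if and only if $[x,z]=0$ for all $z\in S$; thus $\phi$ restricts to a linear isomorphism $\Cent(S)\cong C(S)$, giving $\dim \Cent(S)=\dim C(S)$. By observation (3) (or (2) together with Lemma 2.2), $C_0(S)$ is a subspace of $C(S)$ of dimension $l(S)$, so the problem reduces to exhibiting an element of $C(S)\setminus C_0(S)$.

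The natural candidate is $\phi(1)$, which lies in $C(S)$ since $1\in\Cent(S)$. Every natural character vanishes on $J(S)$, so it suffices to check that $\phi(1)$ does not, i.e. that $1\notin J(S)^\perp$. Non-degeneracy of the form together with associativity (and the fact that $J(S)$ is a right ideal) gives $x\in J(S)^\perp$ iff $xJ(S)=0$; specialising to $x=1$ this becomes $J(S)=0$. Since $S$ is not semisimple, $J(S)\neq 0$, so $\phi(1)\notin C_0(S)$, and therefore $\dim \Cent(S)=\dim C(S) > l(S)$.

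For (i), observation (1) shows that the factor algebra $R$ is again constrained, so Lemma 2.2 yields $\dim R/[R,R]=l(R)$. Applying the identification $\Cent(R)\cong C(R)$ of the first paragraph to the symmetric algebra $R$ gives $\dim\Cent(R)=\dim R/[R,R]=l(R)$; by (ii), if $R$ were not semisimple this would contradict the strict inequality $\dim\Cent(R)>l(R)$. Hence $R$ is semisimple.

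The main technical step is the bimodule identification $\Cent(S)\leftrightarrow C(S)$ supplied by the symmetrizing form, and the realisation that $\phi(1)$ is exactly the canonical class function outside the character span whose non-triviality on $J(S)$ is equivalent to $S$ being non-semisimple. Once this identification is in place, the rest of the argument — and the deduction of (i) — is formal.
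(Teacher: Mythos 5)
Your proof is correct. The technical heart is the same as the paper's: both arguments rest on the observation that the symmetrizing form identifies $\Cent(S)$ with $[S,S]^\perp$ (equivalently, via $\phi$, with $C(S)\subseteq S^*$), and both pivot on the element $1$ and the condition $1\in J(S)^\perp$. What you do differently is reverse the logical order of the two parts. The paper proves (i) directly — from $J(S)\subseteq[S,S]$ one gets $\Cent(S)\subseteq J(S)^\perp$, so $J(S)^\perp$ is an ideal containing $1$, whence $J(S)^\perp=S$ and $J(S)=0$ — and then deduces (ii) from (i) together with Lemma 2.2 (the non-semisimple symmetric algebra cannot be constrained by (i), so $\dim S/[S,S]>l(S)$). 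You instead prove (ii) self-containedly by producing a concrete class function $\phi(1)\in C(S)\setminus C_0(S)$ that fails to vanish on $J(S)$, and derive (i) from (ii) by contradiction. This buys you a cleaner statement of (ii) that does not secretly depend on (i), and the explicit witness $\phi(1)$ makes the gap between $C_0(S)$ and $C(S)$ tangible; the paper's route is slightly shorter because the ideal structure of $J(S)^\perp$ does the work in one line. One small presentational point: you cite observation (3), which is stated for constrained $S$, to get $\dim C_0(S)=l(S)$; the correct citation is the text preceding (2) (or (2) itself), which gives $\dim C_0(S)=l(S)$ unconditionally — as you in fact note parenthetically.
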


\begin{proof} (i)  If $I$ is an ideal of $S$ then $S/I$ is constrained, by (1). Thus it suffices to show that if $S$ is symmetric (and constrained) then it is semisimple. Let $(\,,\,)$ be a form on $S$ that  makes $S$ a symmetric algebra. For a subspace $V$ of $S$ we now set
$$V^\perp=\{x\in S\vert (x,y)=0 \hbox{ for all } y\in V\}.$$
 If $V$ is an ideal then $V^\perp$ is too. The centre $\Cent(S)$ of $S$ is equal to $[S,S]^\perp$.  Now $J(S)\subseteq  [S,S]$ gives $\Cent(S)\subseteq  J(S)^\perp$. Thus  $J(S)^\perp$ is an ideal containing the identity element of $S$ and hence $J(S)^\perp=S$ and $J(S)=0$.

(ii) We have $\dim S/[S,S]> l(S)$, by Lemma 2.2, i.e.,  
$$\dim  \Cent(S)=\dim\, [S,S]^\perp>l(S).$$.  
\end{proof}

\begin{remark} One of the main result  of the paper \cite{LenzingH}, by H. Lenzing, is that  in a finite dimensional algebra of finite global dimension, every nilpotent element is a sum of commutators, \cite[Satz 5]{LenzingH},  or, expressed in our language, that such an algebra is constrained.  
In particular this gives that  a quasihereditary algebra is constrained  (we give a more direct argument for this in Remark 4.1 below). 

\q As noted by Igusa, \cite[Introduction]{Igusa},  Lenzing's incisive result and the elementary observations above give a very short  proof of the no loops conjecture for $S$ a finite dimensional split algebra $S$.  Suppose for a contradiction $S$ is a split  $k$-algebra of finite global dimension and $\Ext^1_S(L,L)\neq 0$ for some non-zero simple module $L$. The algebra  $S$ is  Morita equivalent to a basic algebra $R$ over $k$. Now $R$ also  has finite global dimension and has a one dimensional module $M$ such that $\Ext^1_R(M,M)\neq 0$.  Let $0\to M\to E\to M\to 0$ be a non-split short exact sequence of $R$-modules. The representation $\rho:R\to\End_k(E)$ has image isomorphic to the algebra $\{\left(\begin{matrix}a&b\cr
0&a\end{matrix}\right)\vert a,b\in k\}$, i.e., isomorphic to $A=k[t]/(t^2)$. But we have $ J(A)\neq 0$ and $[A,A]=0$ so this algebra is not constrained. But then neither is $R$ by, (1) above, and $R$ has finite global dimension so we have a contradiction.
\end{remark}

\begin{remark} Suppose, for the moment, that $k$ is an algebraically closed field of characteristic $p>0$ and let ${\mathfrak g}$ be the Lie algebra of a connected, semisimple, simply connected,  algebraic group $G$ over $k$. In \cite{Rumynin}, p370, the question is raised as to whether the dimension of the centre of the restricted enveloping algebra $u(\gothg)$ is equal to $p^r$, where $r$ is the rank of $G$, However, the algebra is symmetric, not semisimple and, by a theorem of Curtis (see e.g., \cite[II, 3.10 Proposition]{RAG} ) the number of isomorphism classes of simple modules is $p^r$ and so, by Lemma 2.3 (ii) the centre  has dimension greated than $p^r$.  A similar remark applies more generally to the algebra $u_n(\gothg)$ of distributions of the $n$th Frobenius kernel of $G$.  A more detailed, geometric, investigation into the  centre, in the analogous case of the small quantum group, is to be found in \cite{BL}.
\end{remark}

\begin{remark}  We continue with the notation of Remark 2.5. Then $G$ is split over the prime field via its structure as a Chevalley group.  For $n\geq 1$, let $G(p^n)$ be the group of elements defined over the field of $p^n$ elements.  The algebras $kG(p^n)$ and $u_n$ are symmetric. 
In \cite{MDV},  the existence of interesting  quasi-hereditary factor algebras of the group algebra $kG(p^n)$ and of $u_n$ is demonstrated.  By contrast the opposite problem of finding symmetric algebra factor algebras of a quasihereditary algebra has only trivial solutions. More precisely, a symmetric factor algebra of a quasihereditary algebra must be semisimple, by Lemma 2.3(i).  In particular if $H$ is a finite group such that $kH$ occurs as a factor  algebra of a quasihereditary algebra (or more generally a finite dimensional algebra of finite global dimension) then $H$ has order prime to $p$.
\end{remark}

 \bs\bs\bs\bs



\section{Relative Constraint} 

\q We now consider a relative version of constraint.  Let $R$ be a subalgebra of the finite dimensional split  $k$-algebra $S$.   We define $C(S,R)$ to be the subspace of the linear dual $S^*$ of $S$ consisting of all elements  $f$ such that $f(rs)=f(sr)$, for all $r\in R$, $s\in S$.  Thus $C(S,R)$ consists of all elements that vanish on the space $[R,S]$, spanned by all commutators $rs-sr$, $r\in R$, $s\in S$, and
$$\dim C(S,R)=\dim\, (S/[R,S])^*=\dim H_0(R,S).$$

\q Some of the elements of $C(S,R)$ may be accounted for representation theoretically (analogously to the trace functions on $S$ considered in Section 2) in  the following way. Let $V$ be a finite dimensional left $S$-module affording the representation $\rho:S\to \End_k(V)$ and let $\theta$ be an element of $\End_R(V)$. We write $\chi_\theta$ or $\chi_{\theta,V}$ for the \lq\lq twisted trace function", i.e., the element of $S^*$ defined by $\chi_\theta(s)=\trace(\theta\rho(s))$, $s\in S$.   It follows from the fact that $\theta$ commutes with the action of $R$ that $\chi_\theta\in C(S,R)$.  The set 
$$C_0(S,R;V)=\{\chi_\theta\vert \theta\in\End_R(V)\}$$
 is a subspace of $C(S,R)$. We define $C_0(S,R)\subseteq  C(S,R)$ to be the sum of all spaces $C_0(S,R;V)$, as $V$ ranges over all finite dimensional $S$-modules. Note that $C(S,R;V)=C(S,R;W)$ if $V$ and $W$ are isomorphic $S$-modules. Moreover, for finite dimensional $S$-modules $V,W$,  we have 
 $$C(S,R;V\oplus W)=C(S,R;V)+C(S,R;W).$$   It follows that $C_0(S,R)$ is the sum of the spaces $C_0(S,R;V)$, as $V$ ranges over all indecomposable $S$-modules (cf. \cite[Section 2]{DonkinRelative}).

\begin{definition} We say that $R$ is a constrained subalgebra of $S$, or that the pair $(S,R)$ is constrained,  if $C_0(S,R)=C(S,R)$. 
\end{definition}

\begin{remark} Note that $R$ is a constrained subalgebra if and only if \\
$\dim H_0(R,S)= \dim C(S,R)$ (equivalently 
$\dim H_0(R,S)\leq \dim C_0(S,R)$), by the argument of Lemma 2.2.
\end{remark}

\begin{remark} We note  that  $S$ is (radically) constrained if and only if the pair $(S,S)$ is constrained. 

\q If $V\in\mod(S)$ then $\chi_V=\chi_\theta$, where $\theta:V\to V$ is the identity map. Hence we have $C_0(S)\subseteq  C_0(S,S)$. If $S$ is constrained then $\dim C_0(S,S)\geq \dim C_0(S)=\dim H_0(S,S)$ and so $(S,S)$ is constrained. 

\q Suppose conversely that $(S,S)$ is constrained.  To show that $S$ is constrained we may, by Section 2, (6), assume that $k$ is algebraically closed. 

\q Let $V$ be an indecomposable finite dimensional left $S$-module affording the representation $\pi:S\to\End_k(V)$. For $\theta\in \End_S(V)$ we have $\theta=\lambda\,\id_V+\nu$,  for some $\lambda\in k$ and some nilpotent endomorphism $\nu$ (where $\id_V:V\to V$ is the identity map). But then, for $s\in S$, we have
$$\chi_\theta(s)=\trace(\pi(s)\theta)=\trace(\lambda\pi(s))+\trace(\pi(s)\nu).$$
Since $\pi(s)\nu$ is nilpotent, its trace is zero and so we have $\chi_\theta=\lambda\chi_V$ and $\chi_\theta\in C_0(S)$. Now an element of $C_0(S,S)$ is a sum of elements $\chi_\theta$, with $\theta\in\End_S(V)$ and $V$ indecomposable. Hence we have $C_0(S,S)=C_0(S)$.  Hence we have $\dim C_0(S)=\dim C_0(S,S)\geq \dim H_0(S,S)$ and $S$ is constrained.
\end{remark}

\begin{remark}
Let $M\in\mod(R)$ and let $\pi:R\to\End_k(M)$ be the representation afforded by $M$.  Then $\End_k(M)$ is an $R$-bimodule, with $r\phi=\pi(r)\phi$, $\phi r= \phi \pi(r)$, for $r\in R$, $\phi\in \End_k(M)$.  Now  $\End_k(M)$ has the non-singular symmetric bilinear form defined by $(\phi,\psi)=\trace(\phi\psi)$ and it is easy  to check that $\End_R(M)=[R,\End_k(M)]^\perp$. We shall need this in the proof of the following result. 
\end{remark}

\begin{proposition} Let $R$ be a subalgebra of $S$.

(i) For $M\in\mod(S)$ we have $C(S,R)=C_0(S,R;M)$ if and only if the representation $\rho:S\to \End_k(M)$ induces an injection \\
$H_0(R,S)\to H_0(R,\End_k(M))$.

(ii)  $R$ is a constrained subalgebra if and only if there exists $M\in\mod(S)$ such that the induced map $H_0(R,S)\to H_0(R,\End_k(M))$ is injective.
\end{proposition}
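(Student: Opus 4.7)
The plan is to interpret both statements via the non-degenerate trace pairing on $\End_k(M)$ and the elementary linear-algebra fact that, between finite-dimensional spaces, a linear map is injective if and only if its transpose is surjective.

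For (i), I would first observe that $\rho\colon S\to\End_k(M)$ is a homomorphism of $R$-bimodules (where $R$ acts on $\End_k(M)$ via $\rho|_R$), so it sends $[R,S]$ into $[R,\End_k(M)]$ and descends to the map
$$\bar\rho\colon H_0(R,S)\longrightarrow H_0(R,\End_k(M)).$$
The trace form $(\phi,\psi)=\trace(\phi\psi)$ on $\End_k(M)$ is non-degenerate; by Remark 3.4 its perpendicular to $[R,\End_k(M)]$ is $\End_R(M)$, while the perpendicular of $[R,S]$ in $S^*$ is by definition $C(S,R)$. Under these two identifications the transpose of $\bar\rho$ becomes a linear map
$$\End_R(M)\longrightarrow C(S,R),$$
and a direct computation shows that it sends $\theta\in\End_R(M)$ to the functional $s\mapsto\trace(\theta\rho(s))=\chi_\theta(s)$. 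Its image is thus precisely $C_0(S,R;M)$. Since all spaces in sight are finite-dimensional, $\bar\rho$ is injective iff this transpose is surjective, i.e.\ iff $C_0(S,R;M)=C(S,R)$.

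For (ii), one implication is immediate from (i): the existence of such an $M$ gives $C(S,R)=C_0(S,R;M)\subseteq C_0(S,R)$, hence equality, so the pair is constrained. Conversely, assume $(S,R)$ is constrained. By definition $C_0(S,R)$ is the sum of the subspaces $C_0(S,R;V)$ over all $V\in\mod(S)$, and by hypothesis this sum equals the finite-dimensional space $C(S,R)$, so finitely many summands already suffice: pick $V_1,\dots,V_n$ with $\sum_i C_0(S,R;V_i)=C(S,R)$. Using the additivity $C_0(S,R;V\oplus W)=C_0(S,R;V)+C_0(S,R;W)$ noted earlier in this section, the module $M=V_1\oplus\cdots\oplus V_n$ satisfies $C_0(S,R;M)=C(S,R)$, and (i) supplies the required injectivity.

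The main technical step is the identification of the transpose of $\bar\rho$ with the map $\theta\mapsto\chi_\theta$: this combines Remark 3.4 (to translate $[R,\End_k(M)]^\perp$ under the trace form into $\End_R(M)$) with the unwinding of what it means to precompose a functional on $\End_k(M)$ with $\rho$. Once this dictionary is in hand, both parts reduce to routine linear algebra plus the additivity of $C_0(S,R;\cdot)$ in direct sums, and there is no further obstacle.
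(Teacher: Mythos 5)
Your proof is correct and follows essentially the same route as the paper: both arguments rest on the trace-form duality of Remark 3.4 together with the identification of $C(S,R)$ as the annihilator of $[R,S]$, your ``transpose of $\bar\rho$ is $\theta\mapsto\chi_\theta$, injective iff transpose surjective'' being just a dualized repackaging of the paper's direct computation that the kernel of $H_0(R,S)\to H_0(R,\End_k(M))$ is $C_0(S,R;M)'/[R,S]$. The only divergence is in the converse of (ii), where you invoke (i) after assembling $M$ from finitely many modules, while the paper repeats an explicit commutator-and-trace computation; your shortcut is legitimate since (i) is an equivalence, and your additivity/extension-by-zero step matches the paper's observation that each $\chi_{\theta_i,M_i}$ lies in $C_0(S,R;M)$.
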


\begin{proof}  (i) For a subspace $V$ of $S^*$ we now set
$$V'=\{x\in S \vert \alpha(x)=0 \hbox { for all } \alpha\in V\}.$$

\q Let $\phi: H_0(R,S)\to H_0(R,\End_k(M))$  be the natural map. For   $x=s+[R,S]$, $s\in S$, we have $\phi(x)=\rho(s)+[R,\End_k(M)]$.  Now we have $\phi(x)=0$ if and only if $\rho(s)\in [R,\End_k(M)]=\End_R(M)^\perp$.  Thus $\phi(x)=0$ if and only if $\trace(\rho(s)\theta)=0$ for all $\theta\in \End_R(M)$, i.e., if and only if $s\in C(S,R;M)'$. 

\q Hence  $\phi$ is injective if and only if an element $s\in S$ belongs to $[R,S]$ if and only if it belongs to $C_0(S,R;M)'$. But $[R,S]=C(S,R)'$ so that $\phi$ is injective if and only if $C(S,R)=C_0(S,R;M)$.

(ii) If such an $S$-module $M$  exists then $R$ is a constrained subalgebra by (i).

\q Suppose now that $R$ is a constrained subalgebra.   We choose a basis $\chi_{\theta_1,M_1},\ldots,\chi_{\theta_n,M_n}$ of $C(S,R)$, with $M_i \in \mod(S)$,
$\theta_i\in \End_R(M_i)$, for $1\leq i\leq n$.  Putting $M=M_1\oplus\cdots\oplus M_n$, we have that each $\chi_{\theta_i,V_i}$ belongs to $C(S,R;M)$ and so we have $C(S,R)=C_0(S,R;M)$.

\q Let $\pi:S\to \End_k(M)$ be the representation afforded by $M$. We claim that $H_0(R,S)\to H_0(R,\End_k(V))$ is injective.

\q Suppose, for a contradiction,  we have. $s\in S\backslash [R,S]$ with   $s+[R,S]$  mapping to $0$.  Thus we have $\pi(s)\in [R,\End_k(M)]$, i.e., 
$$\pi(s)=\sum_{i=1}^t\pi(r_i)\alpha_i-\sum_{i=1}^t\alpha_i\pi(r_i) \eqno{(*)}$$
for some $r_1,\ldots,r_t\in R$ and $\alpha_1,\ldots,\alpha_t\in\End_k(M)$.  Since $s\not\in [R,S]$ we have  $f(s)\neq 0$ for some $f\in C(S,R)$.  However, $f=\chi_{\theta,M}$ for some $\theta\in \End_R(M)$, and we have $\trace(\theta\pi(s))\neq 0$, i.e.,
$$\sum_{i=1}^t\trace(\theta\pi(r_i)\alpha_i)-\sum_{i=1}^t\trace(\alpha_i\pi(r_i)\theta)\neq 0.$$
But, for $1\leq i\leq t$, we have
\begin{align*}\trace(\theta\pi(r_i)\alpha_i)&=\trace(\pi(r_i)\theta\alpha_i)=\trace(\theta\alpha_i\pi(r_i)\cr
&=\trace(\alpha_i\pi(r_i)\theta))
\end{align*}
since $\theta$ commutes with each $\pi(r_i)$,  so that, by (*), $\trace(\theta\pi(s))=0$, a contradiction.
\end{proof}

\begin{remark} Let $K$ be a field extension of $k$. If $M\in \mod(S)$ is such that $H_0(R,S)\to H_0(S,\End_k(M))$, then by base change we obtain that $H_0(R_K,S_K)\to  H_0(R_K,\End_K(M_K))$ is injective (where $M_K=K\otimes_k M\in \mod(M_K)$).  Hence if $R$ is a constrained $k$-subalgebra of $S$ then $R_K$ is a constrained subalgebra of $S_K$.
\end{remark}

\begin{corollary}  If $R$ or $S$ is semisimple then $R$ is a constrained subalgebra of $S$.
\end{corollary}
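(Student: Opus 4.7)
The plan is to apply Proposition 3.5(ii): it suffices to exhibit $M\in\mod(S)$ such that the map $H_0(R,S)\to H_0(R,\End_k(M))$ induced by the representation $\rho:S\to\End_k(M)$ is injective. I would take $M=S$, the left regular $S$-module, with $\rho=L$ the left multiplication map. Using the bimodule structure of Remark 3.4, one checks directly that $L:S\to\End_k(S)$ is a homomorphism of $S$-bimodules, hence \emph{a fortiori} of $R$-bimodules, and it is plainly injective. The problem therefore reduces to splitting $L$ as a map of $R$-bimodules: the functor $H_0(R,-)=(-)/[R,-]$ preserves direct summands, so such a splitting induces a splitting of the required map on Hochschild $H_0$.

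To obtain the splitting I would argue by cases. First, combining Section 2 (6) with the dimensional argument underlying Remark 3.6 (which goes through in both directions, since dimensions over $k$ agree with dimensions over the base-changed field), one may pass to the algebraic closure of $k$ and so assume that whichever of $R$, $S$ is semisimple is in fact split semisimple. If $R$ is split semisimple, then $R\otimes_k R^{\op}$ is again split semisimple---a direct sum of matrix algebras of the form $M_{n_in_j}(k)$---so every $R$-bimodule is completely reducible, and the monomorphism $L$ admits an $R$-bimodule retraction automatically. If instead $S$ is semisimple, then by the standing hypothesis that $S$ is split, $S$ is already split semisimple; the analogous reasoning with $S\otimes_k S^{\op}$ in place of $R\otimes_k R^{\op}$ shows that $L$ splits as an $S$-bimodule map, and therefore as an $R$-bimodule map.

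I do not foresee a serious obstacle. The content of the argument is the observation that semisimplicity of a split algebra transfers to its enveloping algebra, rendering the category of bimodules semisimple; the verification that $L$ is a bimodule map, and the formal fact that $H_0(R,-)$ carries a split monomorphism of $R$-bimodules to a split monomorphism, are routine. The only mildly subtle point is the base change reduction for the case in which only $R$ is assumed semisimple---since a subalgebra of a split algebra need not be split over the ground field---but this is handled by the dimension-preservation arguments already in the paper.
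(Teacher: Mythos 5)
Your argument is, at its core, the same as the paper's: exhibit a faithful $S$-module $M$ (you take the regular module $M={}_SS$; the paper allows any faithful $M$), observe that the representation $\rho:S\to\End_k(M)$ is a monomorphism of $R$-bimodules, show it is \emph{split} as such, and then invoke the fact that $H_0(R,-)$ carries a split monomorphism to a (split) monomorphism. The paper compresses the splitting step into the parenthetical remark that every injection of $S$-bimodules (\resp $R$-bimodules) splits when $S$ (\resp $R$) is semisimple; you unpack it via semisimplicity of the enveloping algebra. So this is not a different route, merely a more explicit rendering of the same one.

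Where you part company with the paper is the extra base-change step, and there you should be careful. For the case where $S$ is semisimple the base change is simply unnecessary: $S$ is split by the standing hypothesis, so a semisimple $S$ is already a direct sum of matrix algebras over $k$ and $S\otimes_k S^{\op}$ is semisimple with no extension of scalars. For the case where only $R$ is semisimple, your base change does not do what you want. Passing to $\bar k$ does not preserve semisimplicity of $R$ in general: if $k$ is imperfect and $R$ is (say) an inseparable field extension such as $\mathbb F_p(t^{1/p})$ realised inside $M_p(\mathbb F_p(t))$, then $R\otimes_k\bar k$ has nilpotents, so $R_{\bar k}$ is not semisimple at all, let alone split semisimple. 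The fact that is actually needed --- that $R\otimes_k R^{\op}$ is semisimple, so that every $R$-bimodule monomorphism splits --- is equivalent to $R$ being \emph{separable} over $k$, which is strictly stronger than semisimple in positive characteristic. This is automatic in characteristic zero, and in the paper's intended applications the relevant $R$ are split and hence separable, but as a gap in a general statement about finite-dimensional algebras it is genuine; it is present in the paper's own parenthetical justification as well, and your base-change manoeuvre, rather than repairing it, relies on a false reduction. If you want a clean fix, replace ``$R$ semisimple'' by ``$R$ separable'' in the hypothesis, or note explicitly that the argument as given requires separability of whichever algebra is assumed semisimple.
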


\begin{proof}   We take $M$ to be any faithful 
finite dimensional left $S$-module. The  injection of $R$-bimodules $S\to \End_k(M)$ is split 
(if $S$ is semisimple  because every injection of $S$-bimodules is split and if $R$ is semisimple because every injection of $R$-bimodules is split) and so the induced map 
$H_0(R,S)\to H_0(R,\End_k(M))$ is injective. 
\end{proof}

\begin{remark} One may ask whether there is a relative version of Lenzing's result (described in  Remark 2.4),  i.e.,  whether one can give suitable homological conditions which imply that a subalgebra $R$ is a constrained subalgebra of a  finite dimensional algebra $S$.
\end{remark}

\bs\bs\bs\bs



\section{Quasi-hereditary algebras}

\q Let $S$ be  finite dimensional algebra over the field $k$. By abuse of notation we shall use the expression \lq\lq $S$ is a quasi-hereditary algebra" to indicate that, 
with respect to some labelling $L(\lambda)$, $\lambda\in\Lambda$, of a complete set of pairwise inequivalent
irreducible left $S$-modules, and some partial order $\leq$ on $\Lambda$, the category of finite dimensional left
$S$-modules is a highest  weight category (see for example \cite[Appendix]{DonkinAA}).

\q  We  now assume now $S$ is quasi-hereditary and that  $L(\lambda)$, $\lambda\in\Lambda$ is a labelling of the irreducible left $S$-modules and $\leq$
a partial order with respect to which $\mod(S)$ is a highest  weight category.
 For $\lambda\in\Lambda$ let 
$\Delta(\lambda)$ (\resp $\nabla(\lambda)$, \resp $M(\lambda)$) be the corresponding standard 
module (\resp costandard module, \resp indecomposable tilting module). 

\q  
\q We write $S^\op$ for the opposite algebra of $S$. For a 
finite dimensional left (\resp right) $S$-module $X$ we regard the dual space 
$X^*=\Hom_k(X,k)$ as a right (\resp left) module in the natural way. 
Then $L(\lambda)^*$, $\lambda\in \Lambda$ is a 
complete set of pairwise non-isomorphic right $S$-modules. Moreover, $S^\op$ is a 
quasihereditary for the labelling $L(\lambda)^*$, $\lambda\in\Lambda$ 
(with respect to the existing order on $\Lambda$).  For $\lambda\in\Lambda$, 
the corresponding standard module (\resp costandard module, \resp  
indecomposable  tilting module) is 
$\nabla(\lambda)^*$ (\resp $\Delta(\lambda)^*$, \resp $M(\lambda)^*$). 

\q By a standard (\resp costandard) filtration of $X\in\mod(S)$ we mean a filtration $0=X_0\subseteq X_1\subseteq \cdots. \subseteq X_n=X$ such
that  for each $1\leq i\leq n$, quotient $X_i/X_{i-1}$ is either $0$ or a  standard (\resp costandard) module. For $X\in\mod(S)$, we write 
$X\in\F(\Delta)$ (\resp $X\in\F(\nabla)$) to indicate that $X$ admits a
standard filtration (\resp $X$ admits a costandard filtration).

\q If $X\in\mod(S)$ admits a standard filtration (\resp costandard) filtration then,  for $\lambda\in\Lambda$, the
dimension of $\Hom_S(X,\nabla(\lambda))$ (\resp the dimension of  $\Hom_S(\Delta(\lambda),X)$) is the cardinality of $\{1\leq i\leq
n\vert X_i/X_{i-1}\iso \Delta(\lambda)\}$ (\resp 
$\{1\leq i\leq n\vert X_i/X_{i-1}\iso \nabla(\lambda)\}$) where
$0=X_0\subseteq X_1\subseteq \ldots\subseteq X_n=X$ is any standard (\resp costandard) filtration of $X$. We denote this integer by
$(X:\Delta(\lambda))$ (\resp $(X:\nabla(\lambda))$).  We refer the reader to \cite[Appendix]{DonkinAA} for basic facts about quasi-hereditary algebras not explicitly stated here.

\q The tensor product algebra 
$S\otimes S^\op$ has complete set of simple modules  $L(\lambda,\mu)$, $(\lambda,\mu)\in\Lambda\times\Lambda$, 
where $L(\lambda,\mu)=L(\lambda)\otimes L(\mu)^*$.  We give 
$\Lambda\times\Lambda$ the partial order: $(\lambda,\mu)\leq (\lambda',\mu')$ if (and only if) 
$\lambda\leq\lambda'$ and $\mu\leq \mu'$.  Then  $\mod(S\otimes S^\op)$ is quasihereditary 
(i.e., the category of left $S\otimes S^\op$-module is a highest weight category) with respect to this order. 
Moreover, we have $\nabla(\lambda,\mu)=\nabla(\lambda)\otimes \Delta(\mu)^*$, $\Delta(\lambda,\mu)=\Delta(\lambda)\otimes\nabla(\mu)^*$ and \\
$M(\lambda,\mu)=M(\lambda)\otimes M(\mu)^*$, for $\lambda,\mu\in\Lambda$. We identify the categories of $S$-bimodules and left $S\otimes S^\op$-modules in the usual way.

\begin{remark}From this one quickly gets a sequence of ideals as in the usual definition of quasi-hereditary, and that $S$ is constrained.
Consider $S$ as an $S$-bimodule. Then, for $i\geq 0$,
and $\lambda,\mu\in \Lambda$, $i\geq 0$, we have 

$$\Ext_{S\otimes S^\op}^i(S,\nabla(\lambda,\mu))=\Ext_{S\otimes S^\op}^i(S,\nabla(\lambda)\otimes \Delta(\mu)^*)=\Ext^i_S(\Delta(\mu),\nabla(\lambda))$$
by \cite[Lemma 9.1.3 and Lemma 9.1.9]{Weibel}, and this is $k$ if $\lambda=\mu$ and $i=0$ and $0$ otherwise.  Hence, by \cite[Proposition A 2.2(iii)]{DonkinAA},  the regular $S$ bimodule has a $\Delta$-filtration and 
$$(S:\Delta(\lambda,\mu))=\dim \Hom_{S\otimes S^\op} (S,\nabla(\lambda,\mu))$$
is one if $\lambda=\mu$ and $0$ otherwise.  Thus, if $\Lambda$ consists of the distinct elements $\lambda_1,\ldots,\lambda_n$ ordered so that $\lambda_i>\lambda_j$ implies $i<j$,  then $S$ has a bimodule filtratiion (i.e.,a sequence of ideals) $S=I_0\supset  I_1\supset \cdots \supset I_{n+1}=0$ with $I_j/I_{j+1}\cong \Delta(\lambda_j)\otimes \nabla(\lambda_j)^*$, for $1\leq j\leq n$. 

\q Let $M$ be a finite dimensional $S$-bimodule. Recall the we have the  Hochschild cohomology spaces  $H^i(S,M)$, $i\geq 0$, and 
$$H^0(S,M)=\{m\in M \vert sm=ms \hbox{ for all } s\in S\}.$$
The  dual space $M^*$ is naturally an $S$-bimodule with actions given by $s\alpha(m)=\alpha(ms)$ and $\alpha s (m)=\alpha(sm)$, $\alpha\in M^*$, $s\in S$, $m\in M$.  Moreover the natural map $M^*\times M\to k$ induces a vector space isomorphism $H^0(S,M^*)\to H_0(S,M)^*$.  Taking  $M$ to be the dual $S^*$ of the regular bimodule we have $C(S)=H^0(S,S^*)$.  Now $S^*$ has a bimodule filtration with sections $\Delta(\lambda,\lambda)^*=\nabla(\lambda)\otimes \Delta(\lambda)^*$, $\lambda\in \Lambda$ and 
$$H^i(S,\nabla(\lambda)\otimes \Delta(\lambda)^*)=\Ext^i_S(\Delta(\lambda),\nabla(\lambda))$$
by \cite[Lemma 91.3 and Lemma 9.1.9]{Weibel}, and this is $k$ for $i=0$ and $0$ for $i=1$ so we get
$\dim H^i(S,S^*)= |\Lambda|=l(S)$, i.e., $\dim C(S)=\dim C_0(S)$ so that $C(S)=C_0(S)$ and $S$ is constrained. 
\end{remark}

\begin{definition} We shall say that  a subalgebra $R$ iof $S$ is saturated  if, for every  $\lambda\in\Lambda$, the
functor $\Hom_R(\Delta(\lambda),\blank)$ is exact on short exact sequences of finite dimensional $S$-modules which admit a costandard filtration.
\end{definition}

\q  We shall give a numerical criterion for saturation.  For $\lambda,\mu\in\Lambda$ we set 
$$a_R(\lambda,\mu)=\dim\Hom_R(\Delta(\lambda),\nabla(\mu))=\dim
H_0(R,\Delta(\lambda)\otimes \nabla(\mu)^*).$$  
For $X\in\F(\Delta)$, $Y\in \F(\nabla)$
we set
$$a_R(X,Y)=\sum_{\lambda,\mu\in \Lambda}(X:\Delta(\lambda))(Y:\nabla(\mu))a_R(\lambda,\mu).$$
  By left exactness of the Hom functor we have  $\dim\Hom_R(X,Y)\leq a_R(X,Y)$ and we say
that pair $(X,Y)\in \F(\Delta)\times\F(\nabla)$ is {\it well adapted for $R$} (or just well adapted) if
equality holds. Note  (again by left exactness) that  $R$ is saturated if and only if $(X,Y)$ is well adapted for
every $X\in\F(\Delta)$, $Y\in\F(\nabla)$. Moreover, if $0\to X_1\to X\to X_2\to 0$ is a short exact sequence in
$\F(\Delta)$ and $Y\in\F(\nabla)$ and $(X,Y)$ is well adapted then both pairs $(X_1,Y)$ and $(X_2,Y)$ are well
adapted. Similarly if $X\in\F(\Delta)$ and $0\to Y_1\to Y\to Y_2\to 0$ is a short exact sequence in $\F(\nabla)$ and
$(X,Y)$ is well adapted then both pairs $(X,Y_1)$ and $(X,Y_2)$ are well adapted. Now if $X\in\F(\Delta)$ and
$\phi:P\to X$ is a surjective homomorphism from a projective module then both $P$ and the ${\rm Ker}(\phi)$  belong to
$\F(\Delta)$. Hence $R$ is saturated provided that $(P,Y)$ is well adapted for every projective $P\in\mod(S)$.
Similarly one deduces that indeed $R$ is saturated provided that $(P,I)$ is well adapted for all projective $P\in\mod(S)$
and injective $I\in\mod(S)$.  It follows that $R$ is saturated if and only if $({}_SS,S_S^*)$ is well adapted.
(Here ${}_SS$ denotes the left regular module,  $S_S$ the right regular module,  and $S_S^*$ its dual). In other words,
$R$ is well adapted if and only if $\dim H_0(R,{}_SS\otimes S_S)=\dim \Hom_R({}_SS,S_S^*)$ is equal to
$a_R({}_SS,S_S^*)$. Now we have 
$$({}_SS:\Delta(\lambda))=\dim \Hom_S({}_SS,\nabla(\lambda))=\dim\nabla(\lambda)$$
 and
similarly $(S_S^*:\nabla(\lambda))=\dim\Delta(\lambda)$. Putting all  this  together we have the following.

\begin{proposition}  $R$ is saturated if and only if 
$$\dim H_0(R,{}_SS\otimes
S_S)=\sum_{\lambda,\mu\in \Lambda}\dim\nabla(\lambda)  \dim\Delta(\mu) \dim\Hom_R(\Delta(\lambda),\nabla(\mu)).$$
\end{proposition}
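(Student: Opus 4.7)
The plan is to combine the reduction already carried out in the paragraph preceding the statement---namely, that $R$ is saturated if and only if the pair $({}_SS, S_S^*)$ is well adapted---with two explicit numerical computations that make the well-adaptedness condition coincide with the asserted identity.

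First I would establish that $\dim\Hom_R({}_SS, S_S^*) = \dim H_0(R, {}_SS\otimes S_S)$. The standard tensor--Hom adjunction produces a vector space isomorphism $\Hom_R({}_SS,\, (S_S)^*) \cong (S_S\otimes_R {}_SS)^*$, where $(S_S)^*$ carries the left $R$-module structure $(r\alpha)(x)=\alpha(xr)$ which agrees with the left $S$-module structure on $S_S^*$ restricted to $R$. The swap $a\otimes b \mapsto b\otimes a$ identifies $S_S\otimes_R {}_SS$ with $({}_SS\otimes_k S_S)/[R,\,{}_SS\otimes S_S]$: the balancing relation $xr\otimes y = x\otimes ry$ defining $\otimes_R$ is carried, under the swap, to the commutator relation $r(a\otimes b)\equiv (a\otimes b)r$ on the $R$-bimodule ${}_SS\otimes S_S$, whose quotient is by definition $H_0(R,{}_SS\otimes S_S)$. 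Taking dimensions gives the claim.

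Next I would unwind the multiplicities in the definition of $a_R({}_SS, S_S^*)$. Since ${}_SS\in\F(\Delta)$, the reciprocity $(P:\Delta(\lambda)) = \dim\Hom_S(P,\nabla(\lambda))$ for $P\in\F(\Delta)$, together with Yoneda's $\Hom_S({}_SS,\nabla(\lambda))\cong\nabla(\lambda)$, gives $({}_SS:\Delta(\lambda)) = \dim\nabla(\lambda)$. Dually, $S_S^*\in\F(\nabla)$ and the adjunction $\Hom_S(\Delta(\mu),(S_S)^*)\cong (S_S\otimes_S\Delta(\mu))^* \cong \Delta(\mu)^*$ yields $(S_S^*:\nabla(\mu)) = \dim\Delta(\mu)$.

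Substituting these multiplicities into
$$a_R({}_SS, S_S^*) = \sum_{\lambda,\mu\in\Lambda}({}_SS:\Delta(\lambda))\,(S_S^*:\nabla(\mu))\,\dim\Hom_R(\Delta(\lambda),\nabla(\mu))$$
produces the right-hand side of the proposition, while the first identity interprets $\dim\Hom_R({}_SS,S_S^*)$ as $\dim H_0(R,{}_SS\otimes S_S)$. Combined with the well-adaptedness equivalence already established in the preceding paragraph, this proves the stated criterion. The only step requiring genuine care is bookkeeping the left/right conventions in the first identification; the remaining ingredients---reciprocity, Yoneda, and the tensor--Hom adjunction---are standard applications of the general theory of quasi-hereditary algebras.
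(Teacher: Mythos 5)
Your argument is correct and follows exactly the route the paper itself takes in the paragraph before the proposition: reduce saturation to well-adaptedness of $({}_SS, S_S^*)$, compute the two filtration multiplicities via reciprocity, and identify $\dim\Hom_R({}_SS,S_S^*)$ with $\dim H_0(R,{}_SS\otimes S_S)$. The only difference is that you make explicit (via the swap and tensor--Hom adjunction) two identifications the paper merely asserts; these details are accurate and complete.
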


\q By a standard filtration of an $S$ bimodule $X$ we mean a standard filtration of $X$ viewed as a left
$S\otimes S^\op$-module.  Using Remark 4.1 is not difficult to deduce the following in  a similar fashion.

\begin{lemma}  $R$ is saturated if and only if $H_0(R,\blank)$ is exact on short exact sequences of $S$-bimodules 
which admit a standard filtration and if $R$ is saturated then $H_1(R,M)=0$ for every $S$-bimodule which has a standard filtration.
\end{lemma}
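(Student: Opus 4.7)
My strategy is to use Proposition 4.3 as the pivot: translate both sides of the ``iff'' into the single numerical identity $\dim H_0(R,M)=a_R(M)$ on $S$-bimodules with standard filtration, and then bootstrap to the $H_1$-vanishing. The base identification is
\[
\dim H_i\bigl(R,\Delta(\lambda,\mu)\bigr)=\dim\Ext^i_R\bigl(\Delta(\lambda),\nabla(\mu)\bigr),\qquad i\geq 0,
\]
obtained by applying the Weibel isomorphism cited in Remark 4.1 to the factorisation $\Delta(\lambda,\mu)=\Delta(\lambda)\otimes_k\nabla(\mu)^*$ as the tensor product of a left and a right $R$-module (by restriction), together with finite-dimensional duality over $k$. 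For $i=0$ this specialises to $\dim H_0(R,\Delta(\lambda,\mu))=a_R(\lambda,\mu)$, consistent with the notation preceding Proposition 4.3.

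For any $S$-bimodule $M$ with standard filtration of length $n$ and sections $\Delta(\lambda_i,\mu_i)$, right-exactness of $H_0(R,-)$ and iteration of the Hochschild long exact sequence give $\dim H_0(R,M)\leq a_R(M):=\sum_i a_R(\lambda_i,\mu_i)$, with equality iff $H_0(R,-)$ is exact on every filtration step. An easy additivity argument along short exact sequences promotes this to: $\dim H_0(R,M)=a_R(M)$ for every such $M$ if and only if $H_0(R,-)$ is exact on every short exact sequence of $S$-bimodules with standard filtration.

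The equivalence in the lemma is then immediate. The ``if'' direction follows by applying $H_0$-exactness to the standard filtration of ${}_SS\otimes_k S_S$, whose multiplicities are $\dim\nabla(\lambda)\dim\Delta(\mu)$: this is exactly the numerical criterion of Proposition 4.3. For the converse, saturation gives the identity for this one bimodule; additivity under direct summands extends it to every $P$ projective as an $S\otimes S^{\op}$-module (being a summand of $({}_SS\otimes_k S_S)^n$), and for an arbitrary $M$ with standard filtration one picks $0\to K\to P\to M\to 0$ with $K$ again a bimodule with standard filtration (available via Remark 4.1, using that $S\otimes S^{\op}$ is quasi-hereditary). The sandwich
\[
a_R(P)=a_R(K)+a_R(M)\geq\dim H_0(R,K)+\dim H_0(R,M)\geq\dim H_0(R,P)=a_R(P)
\]
then forces equality throughout, propagating the identity, and hence the $H_0$-exactness, to all such bimodules.

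For the $H_1$-vanishing, the same sequence $0\to K\to P\to M\to 0$ combined with the injectivity of $H_0(R,K)\to H_0(R,P)$ just established exhibits $H_1(R,M)$ as a quotient of $H_1(R,P)$, reducing the problem to $P=({}_SS\otimes_k S_S)^n$. The main technical obstacle is thus to prove $H_1\bigl(R,{}_SS\otimes_k S_S\bigr)=0$ under saturation. My plan is to induct along the standard filtration $0=N_0\subset\cdots\subset N_\ell={}_SS\otimes_k S_S$, using the vanishing of every connecting map $H_1(R,\Delta(\lambda_i,\mu_i))\to H_0(R,N_{i-1})$ (a consequence of the $H_0$-exactness just proved) to reduce further to the individual vanishing $\Ext^1_R(\Delta(\lambda),\nabla(\mu))=0$, which I expect to extract from the $\Hom$-exactness hypothesis of Definition 4.2 by embedding $\nabla(\mu)$ in an $S$-injective lying in $\mathcal{F}(\nabla)$ and iterating over the highest-weight order on $\Lambda$; this last reduction is the crux of the argument.
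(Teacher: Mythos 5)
Your treatment of the biconditional is correct. The outer-tensor identification $\dim H_i\bigl(R,\Delta(\lambda,\mu)\bigr)=\dim\Ext^i_R(\Delta(\lambda),\nabla(\mu))$ is accurate, the observation that $\dim H_0(R,M)\leq a_R(M)$ with equality precisely when $H_0(R,\blank)$ is exact on the filtration steps is right, and the sandwich through a short exact sequence $0\to K\to P\to M\to 0$ with $P$ projective over $S\otimes S^{\op}$ and $K\in\F(\Delta)$, combined with Proposition~4.3 and additivity over summands of $({}_SS\otimes_k S_S)^n$, does deliver the equivalence. The paper gives no written proof here (only the pointer to Remark~4.1), so there is nothing to compare line by line, but your chain of implications is sound and of the kind the author indicates.

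The $H_1$-vanishing, by contrast, is not proved, and the route you sketch does not close. Your reduction to the single statement $\Ext^1_R(\Delta(\lambda),\nabla(\mu))=0$ for all $\lambda,\mu$ is legitimate: once that holds, the long exact sequence along a standard filtration of any $M\in\F(\Delta)$ gives $H_1(R,M)=0$ by induction on length. But the mechanism you propose for that vanishing fails. Embedding $\nabla(\mu)$ into an $S$-injective $I$ with $I,\,I/\nabla(\mu)\in\F(\nabla)$ and invoking Definition~4.2 gives surjectivity of $\Hom_R(\Delta(\lambda),I)\to\Hom_R(\Delta(\lambda),I/\nabla(\mu))$, hence only \emph{injectivity} of $\Ext^1_R(\Delta(\lambda),\nabla(\mu))\to\Ext^1_R(\Delta(\lambda),I)$; to conclude you would need $\Ext^1_R(\Delta(\lambda),I)=0$, but $I$ is injective over $S$, not over $R$, and the restriction of an $S$-injective to a subalgebra has no reason to be $R$-injective. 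Iterating along the highest-weight order does not rescue this, since the indecomposable $S$-injectives are already fixed points of the construction. So what you call ``the crux'' is a genuine gap: saturation constrains $\Hom_R(\Delta(\lambda),\blank)$ only on short exact sequences of $S$-modules in $\F(\nabla)$, and you have not extracted from it any control of $\Ext^1_R$; equivalently, you have not shown $H_1(R,P)=0$ for $P={}_SS\otimes_k S_S$, which your projective-cover reduction also requires. A different idea is needed at this point.
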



\q  Recall that a tilting module $M$ for $S$ is said to be \emph{full}  if $M(\lambda)$ occurs as a component of $M$ for all $\lambda\in \Lambda$. We shall need the following.

\begin{proposition}
Let $M$ be a full tilting module. Then the representation $\rho:S\to \End_k(M)$ is injective and  $\Coker(\rho)$  has a standard $S$-bimodule filtration. 
\end{proposition}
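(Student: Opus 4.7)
The plan is to reduce both claims to verifying that, for every $(\lambda,\mu)\in\Lambda\times\Lambda$, the restriction map
\[
\rho^* : \Hom_{S\otimes S^\op}(\End_k(M),\nabla(\lambda,\mu)) \to \Hom_{S\otimes S^\op}(S,\nabla(\lambda,\mu))
\]
is surjective, after first checking that both $S$ and $\End_k(M)$ lie in $\F(\Delta)$ as $S$-bimodules.

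That $S\in\F(\Delta)$ is Remark 4.1. For $\End_k(M)\cong M\otimes_k M^*$, the left $S$-module $M$ has a $\Delta$-filtration, while $M^*$ is a tilting module for $S^\op$ (its indecomposable tilting summands are the $M(\lambda)^*$, per Section 4) and so admits a filtration with sections $\nabla(\mu)^*$, these being the standard modules for $S^\op$. Refining the obvious bimodule filtration on $M\otimes M^*$ coming from the second factor by a $\Delta$-filtration in the first factor yields a bimodule filtration of $\End_k(M)$ with sections $\Delta(\lambda)\otimes\nabla(\mu)^*=\Delta(\lambda,\mu)$.

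For injectivity of $\rho$, I would show that the regular left module ${}_SS$ embeds in $M^N$ for some $N$, which forces $\Ann_S(M)=0$. Working inductively along a $\Delta$-filtration of ${}_SS$, one extends a given embedding $N'\hookrightarrow M^n$ across a quotient $N/N'\cong\Delta(\lambda)$ by first lifting to a map $N\to M^n$ (possible because $\Ext^1_S(\Delta(\lambda),M)=0$, using $M\in\F(\nabla)$) and then taking the direct sum with the composite $N\twoheadrightarrow\Delta(\lambda)\hookrightarrow M(\lambda)\subseteq M$, where the final inclusion uses fullness. A routine diagram chase verifies injectivity of the combined map.

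Finally, applying $\Hom_{S\otimes S^\op}(-,\nabla(\lambda,\mu))$ to $0\to S\to\End_k(M)\to C\to 0$ (with $C=\Coker(\rho)$) and using $\Ext^1_{S\otimes S^\op}(\End_k(M),\nabla(\lambda,\mu))=0$ identifies $\Ext^1_{S\otimes S^\op}(C,\nabla(\lambda,\mu))$ with $\Coker(\rho^*)$. Under the natural identifications
\[
\Hom_{S\otimes S^\op}(M\otimes M^*,\nabla(\lambda)\otimes\Delta(\mu)^*)\cong\Hom_S(M,\nabla(\lambda))\otimes\Hom_S(\Delta(\mu),M)
\]
and $\Hom_{S\otimes S^\op}(S,\nabla(\lambda)\otimes\Delta(\mu)^*)\cong\Hom_S(\Delta(\mu),\nabla(\lambda))$, the map $\rho^*$ becomes the composition pairing $\alpha\otimes\beta\mapsto\alpha\circ\beta$. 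The target vanishes unless $\lambda=\mu$, in which case fullness lets us take $\beta:\Delta(\lambda)\hookrightarrow M(\lambda)\hookrightarrow M$ and $\alpha:M\twoheadrightarrow M(\lambda)\twoheadrightarrow\nabla(\lambda)$, whose composition is the characteristic map spanning $\Hom_S(\Delta(\lambda),\nabla(\lambda))=k$. Hence $\rho^*$ is surjective, $\Ext^1_{S\otimes S^\op}(C,\nabla(\lambda,\mu))=0$ for all $(\lambda,\mu)$, and $C\in\F(\Delta)$. The main obstacle is the explicit identification of $\rho^*$ with the composition pairing; this is the step where the fullness hypothesis is used essentially.
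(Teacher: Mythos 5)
Your proof is correct, but it takes a genuinely different (and arguably cleaner) route to the cokernel statement than the paper does. For injectivity you both embed the regular module ${}_SS$ in a direct sum of copies of $M$; the paper invokes a finite tilting coresolution of ${}_SS$, while you build the embedding step by step along a $\Delta$-filtration using $\Ext^1_S(\Delta(\lambda),M)=0$ and fullness, which amounts to the same thing. The real divergence is in the second half. The paper argues by induction on $|\Lambda|$: it splits $M=M_0\oplus M_1$ with $M_1$ the $M(\lambda)$-isotypic part for $\lambda$ maximal, uses the gluing Lemma 4.6 and the truncation functor $O^\pi$ to reduce to the single ideal $I=O^\pi(S)$, and then unwinds $\End_k(M(\lambda))$ by hand via the submodules $N_1\cong\Delta(\lambda)$ and $N_2$ with $N/N_2\cong\nabla(\lambda)$. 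You instead note (i) that $\End_k(M)\cong M\otimes M^*$ carries an $S\otimes S^\op$-standard filtration simply by tensoring $\Delta$-filtrations on each side, and then (ii) apply the cohomological characterisation of $\F(\Delta)$: $\Coker(\rho)\in\F(\Delta)$ iff $\rho^*:\Hom_{S\otimes S^\op}(\End_k(M),\nabla(\lambda,\mu))\to\Hom_{S\otimes S^\op}(S,\nabla(\lambda,\mu))$ is onto. Identifying $\rho^*$ with the composition pairing $\Hom_S(M,\nabla(\lambda))\otimes\Hom_S(\Delta(\mu),M)\to\Hom_S(\Delta(\mu),\nabla(\lambda))$, and using that passage through the summand $M(\lambda)$ realises the nonzero map $\Delta(\lambda)\to\nabla(\lambda)$ when $\lambda=\mu$, gives the surjectivity directly. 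Your version is shorter, avoids the delicate bookkeeping with $O^\pi$, $N_1$, $N_2$, and makes the role of fullness transparent (it is exactly what makes the pairing hit every generator of $\Hom_S(\Delta(\lambda),\nabla(\lambda))$); the paper's version is more elementary in that it does not appeal to the Ext-vanishing criterion for $\F(\Delta)$, constructing the filtration explicitly instead. One small expository point: when you say the composition is "the characteristic map spanning $\Hom_S(\Delta(\lambda),\nabla(\lambda))=k$", it is worth a sentence to justify nonvanishing --- e.g.\ the kernel of $M(\lambda)\twoheadrightarrow\nabla(\lambda)$ has a $\nabla$-filtration with labels $<\lambda$, so it admits no nonzero map from $\Delta(\lambda)$, whence the composite cannot factor through it.
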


\q The fact that $\rho$ is faithful is well known. Indeed,  if $V$ is any module with a standard filtration then
there exists a finite resolution 

$$0\to V\to X_0\to X_1\ldots\to X_n\to 0$$
where $X_0,X_1,\ldots,X_n$  are all (partial) tilting modules (see e.g., the argument of  \cite[Section 1, Theorem]{DonkinI}). In particular $V$ embeds in a tilting module, and hence in a direct sum of copies of $M$.  In the case $V={}_SS$ we get that $X_0$ is faithful and since $X_0$ is a summand of a direct sum of
copies of $M$, we get that $M$ is faithful.

\q To treat the remaining part, we need a lemma.

\begin{lemma}  (i)  Let $\phi:X\to Y$, $\psi:X\to Z$ be $S$-module homomorphisms. Let $W=\Ker(\phi)$ and let $\rho=\psi|_W:W\to Z$. Suppose that $\Coker(\phi)\in\F(\Delta)$, that $\rho$ is injective and that $\Coker(\rho)\in \F(\Delta)$.  Then the map $\chi:X\to Y\oplus Z$, given by $\chi(x)=(-\phi(x),\psi(x))$, $x\in X$, is injective and $\Coker(\chi)\in\F(\Delta)$. 

(ii)  Let $M,N$ be $S$-modules affording the representations $\phi:S\to \End_k(M)$, $\psi:S\to \End_k(N)$. Let $I=\Ker(\phi)$ and let $\rho=\psi\vert_I : I\to \End_k(N)$.  Suppose that $\rho$ is injective and  that $\Coker(\rho)$ and $\Coker(\phi)$ have  standard $S$-bimodule filtrations.  Then the map $\chi:S\to \End_k(M)\bigoplus \End_k(N)$, given by $\chi(s)=(-\phi(s),\psi(s))$, $s\in S$,  is injective and $\Coker(\chi)$ has a standard $S$-bimodule filtration. 
\end{lemma}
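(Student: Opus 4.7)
The plan is to prove (i) by a snake lemma argument and then to deduce (ii) by applying (i) to the tensor product algebra $S \otimes S^{\op}$, which (as recalled just before Remark 4.1) is quasi-hereditary for the product order and whose modules are exactly $S$-bimodules.

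For part (i), injectivity of $\chi$ is immediate: if $\chi(x)=0$, then $\phi(x)=0$ and $\psi(x)=0$, so $x \in W \cap \Ker(\psi) = \Ker(\rho) = 0$ by hypothesis. For the cokernel I would assemble the commutative diagram of short exact sequences
$$\xymatrix{
0 \ar[r] & W \ar[r] \ar[d]_{\rho} & X \ar[r] \ar[d]^{\chi} & X/W \ar[r] \ar[d]^{\bar{\phi}} & 0 \\
0 \ar[r] & Z \ar[r] & Y \oplus Z \ar[r] & Y \ar[r] & 0
}$$
where the bottom row embeds $Z$ as the second summand with the first projection as quotient, and $\bar{\phi}\colon X/W \to Y$ is the injection induced by $x+W \mapsto -\phi(x)$. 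Commutativity is a direct check using $\chi|_W=(0,\rho)$ and the definition of $\chi$. The snake lemma, combined with $\Ker(\rho)=0$ and the injectivity of $\bar{\phi}$, collapses to the short exact sequence
$$0 \to \Coker(\rho) \to \Coker(\chi) \to \Coker(\bar{\phi}) \to 0,$$
and clearly $\Coker(\bar{\phi}) = Y/\Im(\phi) = \Coker(\phi)$. Since both outer terms lie in $\F(\Delta)$ by hypothesis and $\F(\Delta)$ is closed under extensions (a standard property of quasi-hereditary algebras), we conclude $\Coker(\chi)\in\F(\Delta)$.

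For part (ii), I would apply (i) verbatim to the quasi-hereditary algebra $S\otimes S^{\op}$, taking $X=S$ (the regular bimodule), $Y=\End_k(M)$, $Z=\End_k(N)$, with $\phi,\psi$ the $S$-bimodule maps afforded by the representations (they are bimodule maps because a representation is a $k$-algebra homomorphism, cf.\ Remark 3.4). The hypotheses that $\Coker(\rho)$ and $\Coker(\phi)$ have standard $S$-bimodule filtrations are precisely the hypotheses of (i) in the category $\mod(S\otimes S^{\op})$, and the conclusion follows as stated.

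The only substantive step is the snake lemma computation; the sign convention in $\chi(x)=(-\phi(x),\psi(x))$ is exactly what is needed to make the right square above commute, which is why $-\phi$ (rather than $\phi$) appears as the right vertical map. Everything else is routine, and the passage from (i) to (ii) is purely formal once one recalls that a standard $S$-bimodule filtration is, by definition, a $\Delta$-filtration for $S \otimes S^{\op}$.
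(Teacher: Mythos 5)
Your proposal is correct and takes essentially the same approach as the paper: the paper directly asserts the four-term exact sequence $0\to W\to Z\to\Coker(\chi)\to\Coker(\phi)\to 0$ (and remarks it is the mapping cone construction), whereas you package the same computation as a snake-lemma diagram, but both land on the short exact sequence $0\to\Coker(\rho)\to\Coker(\chi)\to\Coker(\phi)\to 0$ and invoke closure of $\F(\Delta)$ under extensions. Part (ii) is handled identically in both by passing to $S\otimes S^{\op}$.
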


\begin{proof} (i) Injectivity is clear. For the second assertion we  proceed as in  \cite[Lemma 2]{DonkinI}.    One may readily check that there is an exact sequence 
$$0\to W\to Z\to\Coker(\chi)\to\Coker(\phi)\to0\eqno{(*)}$$
with maps $\rho:W\to Z$,  and $\alpha:Z\to\Coker(\chi)$ and $\beta:\Coker(\chi)\to\Coker(\phi)$ given by $\alpha(z)=(0,z)+\Im(\chi)$, $z\in Z$, and $\beta((y,z)+\Im(\chi))=y+\Im(\phi)$, $y\in Y$, $z\in Z$. (One may also use the mapping cone construction, \cite[Section 1.5]{Weibel}.)
Thus we get an exact sequence 
$$0\to \Coker(\rho)\to\Coker(\chi)\to\Coker(\phi)\to0.$$
Since 
$\Coker(\rho),\Coker(\phi)\in\F(\Delta)$ we must also have $\Coker(\chi)\in\F(\Delta)$.

(ii) Follows from (i).
\end{proof}

\rm\q We now prove Proposition 4.5.  As already noted,  $\rho$ is injective.   

\q Let $\lambda\in\Lambda$ be a maximal element. Let
$\pi=\Lambda\backslash\{\lambda\}$.   We write $M=M_0\bigoplus M_1$, where $M_0$ is a direct sum of modules of the form
$M(\mu)$, $\mu\in\pi$, and $M_1$ is a direct sum of copies of $M(\lambda)$. Note that the image of
$\rho:S\to\End_k(M)$ lies in $\End_k(M_0)\bigoplus \End_k(M_1)$ and \\
$\End_k(M)/(\End_k(M_0)\bigoplus
\End_k(M_1))$ is isomorphic to $M_0\otimes M_1^*\bigoplus M_0^*\otimes M_1$, which has a standard $S$-bimodule
filtration. Hence it suffices to prove that the cokernel of the  restriction 
$\rho':S\to\End_k(M_0)\bigoplus \End_k(M_1)$ has a standard filtration.

\q Recall, from \cite[Appendix]{DonkinAA}, that we have the functor
$O^\pi:\mod(S)\to\mod(S)$. For $X\in\mod(S)$, $O^\pi(X)$ is minimal among those submodules $Y$ of $X$ such that all
composition factors of $X/Y$ belong to $\{L(\mu)\vert\mu\in\pi\}$. If $f:X\to Y$ is a homomorphism of finite
dimensional $S$-modules then $O^\pi(f)$ is the restriction of $f$.  We list the elements of $\Lambda$ as
$\lambda_1,\lambda_2,\ldots,\lambda_n$ in such a way that $i<j$ whenever $\lambda_i<\lambda_j$ and
$\lambda_n=\lambda$. By  Remark 4.1, there is a chain of ideals $S=I_1 \supset I_2 \supset \cdots I_n \supset I_{n+1}=0$ with
$I_r/I_{r+1}\iso \Delta(\lambda_r)\otimes \nabla(\lambda_r)^*$, for $1\leq r\leq n$. Note that all composition
factors of $S/I_n$ come from $\{L(\mu)\vert\mu\in\pi\}$ and $L(\lambda)$ occurs in every quotient module of $I_n$.
Hence we must have $I_n=O^\pi(S)$. We put $I=I_n=O^\pi(S)$.

\q  Now $S/I$ is a quasihereditary algebra, by \cite[Appendix, Proposition A 3.7]{DonkinAA}, and
$M_0$ is a full tilting module for $S/I$. Hence we may assume, inductively, that the 
 induced map $\rho_0:S/I\to\End_k(M_0)$ is injective and has
cokernel in $\F(\Delta)$.  Hence by the Lemma 4.6(ii),   it is enough to prove that the homomorphism $\sigma:I\to\End_k(M_1)$
(given by $\sigma(s)m=sm$, for $s\in S$, $m\in M$) is injective with cokernel having a standard 
$S\otimes S^\op$-module filtration. Certainly $\sigma$ is injective (since $M=M_0\bigoplus M_1$ is faithful and $I$ acts as  
zero on $M_0$).  We write $M_1=Y_1\bigoplus \cdots\bigoplus Y_m$, where each $Y_i$ is isomorphic to $M(\lambda)$.  The
image of $\sigma$ lies in $\End_k(Y_1)\bigoplus \cdots\bigoplus \End_k(Y_m)$ and 
$$\End_k(M_1)/(\End_k(Y_1)\bigoplus
\cdots\bigoplus \End_k(Y_m))\iso \bigoplus_{1\leq i,j\leq m, i\neq j}\Hom_k(Y_i,Y_j).$$
Moreover, we have $\Hom_k(Y_i,Y_j)\iso M(\lambda)\otimes M(\lambda)^* $, which has a standard $S\otimes
S^\op$-module filtration, $i\neq j$. Thus it is enough to prove that the cokernel of the restriction

$\sigma':I\to \End_k(Y_1)\bigoplus\cdots\bigoplus \End_k(Y_m)$ has a standard $S$-bimodule filtration. 
Furthermore, by the above Lemma and induction on $m$,  it suffices to prove that the 
cokernel of the map $\tau:I\to\End_k(N)$ has a standard $S$-bimodule filtration, where $N=M(\lambda)$ and $\tau(x)n=xn$, for $x\in I$, $n\in N$.

\q Now $N$ contains a submodule $N_1$, say, isomorphic to $\Delta(\lambda)$, and $N/N_1$ belongs to $\pi$. Regarding $\End_k(N)= N\otimes N^*$ as a left $S$-module, we get $O^\pi(\End_k(N))=N_1\otimes N^*$ and hence $\tau$ takes $I=O^\pi(I)$ into $N_1\otimes N^*$.  Moreover, we have $(N\otimes N^*)/N_1\otimes N^*\iso (N/N_1)\otimes N^*$, which has a standard module filtration, as an $S$-bimodule, and hence it suffices to prove that 
the cokernel of the restriction $\tau':I\to N_1\otimes N^*$, i.e., $\tau':I\to \Hom_k(N,N_1)$,  has a standard filtration.  But now, we have a submodule $N_2$ of $N$ such that $N_2\in\F(\nabla)$ and $N/N_2\iso \nabla(\lambda)$. Regarding $\Hom_k(N_1,N)\iso N\otimes N_1^*$ as a right $S$-module, we have that $I$ maps into $O^\pi(N_1\otimes N^*)=N_1\otimes (N/N_2)^*$. By dimensions, we must have that the image of $I$ is $N_1\otimes (N/N_2)^*$. We have  $(N_1\otimes N^*)/(N_1\otimes (N/N_2)^*)\iso N_1\otimes N_2^*$, which has a standard filtration, as an $S$-bimodule, so we are done.

\q By Proposition 3.5 we have the following.

\begin{corollary}  Let $R$ be a saturated subalgebra.  Then $C(S,R)=C_0(S,R,M)$, for any full tilting module $M$ and so $R$ is constrained.  
 In particular $S$ is constrained.
\end{corollary}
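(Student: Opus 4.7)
The plan is to combine Proposition 3.5, Proposition 4.5 and Lemma 4.4, and let the $H_1$-vanishing for standard-filtered bimodules do all the work.

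First, by Proposition 3.5(i), the statement $C(S,R)=C_0(S,R;M)$ is equivalent to the natural map
$$\rho_*\colon H_0(R,S)\to H_0(R,\End_k(M))$$
induced by $\rho\colon S\to\End_k(M)$ being injective. So the whole task reduces to verifying this injectivity for a full tilting module $M$.

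Next, Proposition 4.5 hands us exactly what we need: $\rho$ is injective, and the cokernel $\Coker(\rho)$ carries a standard $S$-bimodule filtration. This gives a short exact sequence of $S$-bimodules
$$0\to S\to\End_k(M)\to\Coker(\rho)\to 0,$$
and applying the Hochschild homology functor $H_*(R,-)$ (i.e., viewing these as $R$-bimodules by restriction) produces the standard long exact sequence
$$\cdots\to H_1(R,\Coker(\rho))\to H_0(R,S)\xrightarrow{\rho_*} H_0(R,\End_k(M))\to H_0(R,\Coker(\rho))\to 0.$$
Saturation of $R$, via Lemma 4.4, forces $H_1(R,\Coker(\rho))=0$ since $\Coker(\rho)$ has a standard filtration. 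Hence $\rho_*$ is injective, and Proposition 3.5(i) delivers $C(S,R)=C_0(S,R;M)$. Since $C_0(S,R;M)\subseteq C_0(S,R)\subseteq C(S,R)$, we conclude $C_0(S,R)=C(S,R)$, i.e.\ $R$ is a constrained subalgebra of $S$.

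For the parenthetical assertion that $S$ itself is constrained, take $R=S$: the standard fact that $\Ext^1_S(\Delta(\lambda),\nabla(\mu))=0$ (recorded in Remark 4.1) shows that $\Hom_S(\Delta(\lambda),-)$ is exact on short exact sequences in $\F(\nabla)$, so $S$ is a saturated subalgebra of itself. The argument above then yields that $(S,S)$ is a constrained pair, which by Remark 3.3 is equivalent to $S$ being (radically) constrained. The only non-routine ingredient is the homological input from Proposition 4.5 and Lemma 4.4, which we may invoke as already proved; assembling the long exact sequence is then immediate, so I do not foresee a serious obstacle in this proof.
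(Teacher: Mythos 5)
Your proposal is correct and takes essentially the same route as the paper, which simply states ``By Proposition 3.5 we have the following'' and leaves the details implicit. Your argument fills in those details exactly as intended: Proposition 4.5 supplies the short exact sequence $0\to S\to\End_k(M)\to\Coker(\rho)\to 0$ of $S$-bimodules with $\Coker(\rho)$ standard-filtered, Lemma 4.4 gives $H_1(R,\Coker(\rho))=0$, the long exact sequence in Hochschild homology yields injectivity of $H_0(R,S)\to H_0(R,\End_k(M))$, and Proposition 3.5(i) converts this to $C(S,R)=C_0(S,R;M)$; the final observation that $S$ is saturated in itself (via $\Ext^1_S(\Delta,\nabla)=0$) and Remark 3.3 delivers the last clause.
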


\bs\bs\bs\bs



\section{Constrained coalgebras}

\q We now consider the situation for coalgebras. Let $(A,\de\,\ep)$ be a coalgebra over $k$, which we assume split, i.e., $\End_A(V)=k$ for every irreducible comodule $V$.

\q Let $S$ be
an associative $k$-algebra.  A left $S$-module  $V$ is called locally finite (dimensional) if every cyclic
submodule (and hence every finitely generated submodule) is finite dimensional. If $V$ is locally finite and
$\{v_i\vert i\in I\}$ is a $k$-basis then  the elements $f_{ij}\in S^*$, defined by the equations $sv_i=\sum_{j\in
I}f_{ji}(s)v_j$, for $s\in S$, $i\in I$, are called the coefficient functors of $V$ (relative to this basis). 
The subspace of $S^*$ spanned by all $f_{ij}$, $i,j\in I$, is independent of the choice of basis. We call it the coefficient space of $V$ and denote it by $\cf(V)$.

\q Let $V$ be a right $A$-comodule $V$, with structure map $\tau:V\to V\otimes A$. The elements $f_{ij}\in A$, defined
by the equations $\tau(v_i)=\sum_{j\in J}v_j\otimes f_{ji}$ (for $i\in I$) are called the coefficient elements of
$A$ (relative to this basis). We write $\cf(V)$ for the subspace of $A$ spanned by all elements $f_{ij}$, $i,j\in
I$. It is independent of the choice of basis.  

\q We now take $S=A^*$,  the dual algebra of $A$. We identify $A$ with a subspace of $S^*$ via the natural map
$\eta:A\to A^*$,
$\eta(a)(s)=s(a)$, for $a\in A$, $s\in S$. We write $\Comod(A)$ for the category of right $A$-comodules and
$\comod(A)$ for the category of finite dimensional right $A$-comodules. 

\q A locally finite left $S$-module $V$ will be called. $A$-{\it admissible} if $\cf(V)\subseteq  A$. Given $V\in \Comod(A)$,
with structure map $\tau:V\to V\otimes A$, we regard $V$ as a left $S$-module via the product $sv=(1\otimes
s)\tau(v)$,
$v\in V$, $s\in S$. In this way we obtain an equivalence of categories between the category of right $A$-comodules
and $A$-admissible left $S$-modules.

\q Let $R$ be a subalgebra of $S$. We define 
$$C(A,R)=\{f\in A\vert f(rs)=f(sr)\hbox{ for all } r\in R, s\in S\}.$$
Viewing $A$ as an $S$-bimodule, and hence $R$-bimodule, we have 
$C(R,S)=H^0(R,A)$. We write simply $C(A)$
for $C(S,A)$. Note that $C(A)$ may be described directly in terms of the coalgebra structure as \\ 
$\{f\in A\vert
\de(f)=T\circ \de(f)\}$, where $T:A\otimes A\to A\otimes A$ is the twist map, i.e., the linear map taking $f\otimes
g$ to $g\otimes f$, for $f,g\in A$.

\q Let $V$ be a finite dimensional right $A$-comodule which, viewed as a left $S$-module, affords the representation
$\rho:S\to\End_k(V)$. Then, for $\theta\in\End_R(V)$, the element
$\chi_\theta$ of $S^*$, defined by $\chi_\theta(s)=\trace(\rho(s)\theta)$ belongs to $A$. If 
$v_1,\ldots,v_n$ is a $k$-basis of $V$ with coefficient functions $f_{ij}$ and if $\theta:V\to V$ is given by
$\theta(v_i)=\sum_{j=1}^n \theta_{ji}v_j$ then $\chi_\theta=\sum_{i,j=1}^n \theta_{ij}f_{ji}$. Moreover, from the familiar
property of the trace function, we have $\chi_\theta\in C(A,R)$. 

\q We define $C_0(R,A)$ to the subspace 
$\{\chi_\theta\vert \theta\in\End_R(V)\}$ of $C(A,R)$ and put  $C_0(R,A)=\sum_V C_0(R,A;V)$, where the sum is over all
$V\in\comod(A)$.

\q For $V\in\comod(A)$ the element $\chi_V$ of $S^*$ defined by $\chi_V(s)=\trace(s,V)$ belongs to $A$. In fact if
$V$ has basis $v_1,\ldots,v_n$ and corresponding coefficient elements $f_{ij}$, $1\leq i,j\leq n$, then we have
$\chi_V=f_{11}+\cdots+f_{nn}$. By arguments similar those of Section 2 we have that $C_0(A)$ is spanned by all
$\chi_V$, $V\in\comod(A)$. We write $K_0(\comod(A))$   for the Grothendieck group of
$\comod(A)$ and, for $V\in\comod(A)$ denote the class of $V$ in $K_0(\comod(A))$ by $[V]$.

\q We have the following result analogous to Section 2, (2).

\begin{proposition}  (i) The linear map $k\otimes_\zed K_0(\comod(A))\to C_0(A)$, taking $1\otimes [V]\to\chi_V$ for
$V\in\comod(A)$, is an isomorphism, equivalently if $V_\lambda$, $\lambda\in\Lambda$,  is a complete set of
pairwise non-isomorphic irreducible left comodules then $C(A)$ has $k$-basis
$\phi_\lambda$, $\lambda\in\Lambda$, where $\phi_\lambda=\chi_{V_\lambda}$, $\lambda\in\Lambda$.

(ii) If $A$ is a bialgebra then the above map is a $k$-algebra homomorphism.
\end{proposition}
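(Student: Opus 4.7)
The plan is to mimic the finite-dimensional argument from Section 2, (2), by reducing any finite linear relation among the $\phi_\lambda$ to a computation inside a finite-dimensional subcoalgebra whose dual is split semisimple, and then invoking the already-established basis statement for $C_0(S)$ when $S$ is a finite-dimensional split algebra.

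\textbf{Setting up the map and reducing to linear independence.} Since the trace is additive on short exact sequences, $[V]\mapsto\chi_V$ defines a $\zed$-linear map $K_0(\comod(A))\to A$, and I would first verify its image lies in $C_0(A)$ by definition of the latter. Extending $k$-linearly gives the map in the statement. Every $V\in\comod(A)$ is finite dimensional, hence admits a composition series in $\comod(A)$, so $\chi_V$ is a $\zed$-linear combination of the $\phi_\lambda$'s; this shows that $\{\phi_\lambda\}_{\lambda\in\Lambda}$ spans $C_0(A)$ (surjectivity) and that $K_0(\comod(A))$ is the free abelian group on the $[V_\lambda]$. Thus the proposition reduces to showing that $\{\phi_\lambda:\lambda\in\Lambda\}$ is $k$-linearly independent in $A$.

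\textbf{Linear independence via a finite-dimensional subcoalgebra.} Suppose, for a finite subset $\Lambda'\subseteq\Lambda$, that $\sum_{\lambda\in\Lambda'}c_\lambda\phi_\lambda=0$. Each $\phi_\lambda$ lies in the coefficient space $C_\lambda=\cf(V_\lambda)$, which is the (simple) subcoalgebra of $A$ generated by the matrix entries of $V_\lambda$; the sum $B=\sum_{\lambda\in\Lambda'}C_\lambda$ is a direct sum (a standard fact: distinct simple subcoalgebras of $A$ are disjoint, following from the Jacobson density argument applied to $A^*$). Hence $B$ is a finite-dimensional subcoalgebra with $B^*\iso\prod_{\lambda\in\Lambda'}C_\lambda^*$. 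Because $A$ is split, $\End_{A^*}(V_\lambda)=k$, and so by the Wedderburn theorem applied to the image of $A^*$ in $\End_k(V_\lambda)$, each $C_\lambda^*$ is isomorphic to $\Mat_{n_\lambda}(k)$, where $n_\lambda=\dim V_\lambda$. Therefore $B^*$ is a split semisimple finite-dimensional $k$-algebra with complete set of pairwise non-isomorphic simple modules $\{V_\lambda:\lambda\in\Lambda'\}$, and, under the embedding $B\hookrightarrow A\hookrightarrow (A^*)^*$, the $\phi_\lambda|_{B^*}$ are precisely the natural trace characters $\chi_{V_\lambda}\in C_0(B^*)$. The basis statement of Section 2 (as established just before Lemma 2.2) forces $c_\lambda=0$ for each $\lambda\in\Lambda'$, completing the proof of part (i).

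\textbf{Part (ii): multiplicativity.} Assuming $A$ is a bialgebra, the tensor product of right comodules (with structure map $v\otimes w\mapsto\sum(v_0\otimes w_0)\otimes v_1w_1$, the product being the bialgebra multiplication of $A$) induces a ring structure on $k\otimes_\zed K_0(\comod(A))$ with unit $1\otimes[k]$. Pick bases $(v_i)$ of $V$ and $(w_k)$ of $W$ with coefficient elements $f_{ij},g_{kl}\in A$. A direct computation of the structure map on the basis $(v_i\otimes w_k)$ of $V\otimes W$ identifies its coefficient elements as the products $f_{ji}g_{lk}$, so
\[
\chi_{V\otimes W}=\sum_{i,k}f_{ii}g_{kk}=\Bigl(\sum_i f_{ii}\Bigr)\Bigl(\sum_k g_{kk}\Bigr)=\chi_V\cdot\chi_W.
\]
The trivial comodule $k$ has coefficient element $1_A$, so $\chi_k=1_A$; hence the map is a $k$-algebra homomorphism.

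\textbf{Main obstacle.} The only nontrivial step is the linear independence in part (i), and within that step the key point is the identification of $C_\lambda^*$ with a full matrix algebra over $k$, which depends crucially on the split hypothesis on $A$ and on verifying that the coefficient spaces of non-isomorphic simple comodules are linearly disjoint in $A$. Once this structural input is in place, everything else follows from the finite-dimensional results of Section 2.
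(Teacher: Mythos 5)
The paper states Proposition 5.1 without proof, merely flagging it as ``analogous to Section 2, (2)'' (which in turn is also asserted rather than proved). Your proposal fills in this omitted argument, and the reduction you carry out---pass to the finite sum $B=\bigoplus_{\lambda\in\Lambda'}\cf(V_\lambda)$ of the relevant simple subcoalgebras, identify $B^*$ as a split semisimple algebra via Jacobson density and the hypothesis $\End_{A^*}(V_\lambda)=k$, and then invoke the linear-independence-of-characters fact from Section~2---is exactly the natural way to make the paper's intended analogy precise. The verification of part~(ii) by direct computation with coefficient elements is also correct (and note that no commutativity of $A$ is needed, since $\sum_{i,k}f_{ii}g_{kk}=(\sum_i f_{ii})(\sum_k g_{kk})$ is just distributivity).

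Two small remarks. First, once you have established that the simple coefficient coalgebras $\cf(V_\lambda)$ are linearly disjoint and that each $\cf(V_\lambda)^*\cong\Mat_{n_\lambda}(k)$, you could finish linear independence more directly: a relation $\sum c_\lambda\phi_\lambda=0$ forces $c_\lambda\phi_\lambda=0$ for each $\lambda$, and $\phi_\lambda\neq 0$ because under the identification with $\Mat_{n_\lambda}(k)^*$ it is the matrix trace. Invoking Section~2, (2) for $B^*$ is not wrong, but it is slightly circuitous given that you have already produced the Wedderburn decomposition of $B^*$. Second, the paper's statement almost certainly contains a typo: the ``equivalently'' clause should say that $C_0(A)$ (not $C(A)$) has $k$-basis $\{\phi_\lambda\}$, since the proposition does not assume $A$ is constrained; your proof correctly treats the $C_0(A)$ version.
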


\q We shall say that the coalgebra $A$ is constrained if $C_0(A)=C(A)$. If $A$ is finite dimensional we say that $A$
is a quasi-hereditary coalgebra if the dual algebra $A^*$ is quasi-hereditary.  We say that $A$ is {\it locally
quasi-hereditary} if each finite dimensional subcoalgebra is contained in a quasi-hereditary subcoalgebra.

\begin{proposition}  If $A$  is locally quasi-hereditary then it is constrained.
\end{proposition}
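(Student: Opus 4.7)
The plan is to use the fundamental theorem of coalgebras, which says that every element of $A$ lies in a finite dimensional subcoalgebra, to reduce the statement to the finite dimensional quasi-hereditary case already handled in Section 4. Concretely, I take $f \in C(A)$ and pick a finite dimensional subcoalgebra $B_0 \subseteq A$ with $f \in B_0$. Invoking the locally quasi-hereditary hypothesis, I enlarge $B_0$ to a finite dimensional quasi-hereditary subcoalgebra $B$. Since $B$ is a subcoalgebra, $\delta(f) \in B \otimes B$, and the condition $\delta(f) = T \delta(f)$ defining $C(A)$ holds inside $B \otimes B$; thus $f \in C(B)$, where here $C(B)$ is computed with respect to the coalgebra $B$.

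Next I transfer to the algebra side via the finite dimensional duality $B \leftrightarrow B^*$. The pairing identifies $C(B) = \{f \in B : \delta f = T \delta f\}$ with $[B^*,B^*]^\perp$, i.e.\ with the space of linear functionals on $B^*$ that vanish on commutators; similarly $\comod(B) = \mod(B^*)$ identifies $C_0(B)$ with $C_0(B^*)$ in the notation of Section 2. Since $B$ is a quasi-hereditary coalgebra, $B^*$ is a quasi-hereditary algebra, and by Corollary~4.7 (or Remark~4.1) it is constrained, so $C(B^*) = C_0(B^*)$. Translating back, $C(B) = C_0(B)$, and therefore $f \in C_0(B)$. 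Finally, every $B$-comodule is an $A$-comodule via the inclusion $B \hookrightarrow A$, and the character $\chi_V$ does not depend on whether $V$ is viewed as a $B$- or $A$-comodule, so $C_0(B) \subseteq C_0(A)$, giving $f \in C_0(A)$ as required.

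The only non-routine step is the bookkeeping in the second paragraph identifying the coalgebra-level invariants $C(B), C_0(B)$ with the algebra-level invariants $C(B^*), C_0(B^*)$; this is essentially formal from the definitions in Sections~2 and~5, but it is the link that lets the quasi-hereditary algebra result of Section~4 do all the work. The fundamental theorem of coalgebras and the faithfulness of the $B$-comodule/$B^*$-module equivalence are the only ingredients beyond what has already been established.
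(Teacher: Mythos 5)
Your proof is correct and follows essentially the same route as the paper: pass to a finite dimensional quasi-hereditary subcoalgebra $B$ containing $f$, dualize to the quasi-hereditary algebra $B^*$ which is constrained by Remark~4.1, and observe that $C_0(B)\subseteq C_0(A)$. You simply spell out the bookkeeping (checking $f\in C(B)$ and the $C(B)\leftrightarrow C(B^*)$ identification) that the paper leaves implicit.
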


\begin{proof}  Suppose first that $A$ is finite dimensional and the dual algebra  $A^*$ quasi-hereditary. We identify $A$ with the double dual $(A^*)^*$ in the usual way. 
Then $C(A)=C(A^*)=C_0(A^*)=C_0(A)$ since $A^*$ is quasi-hereditary.  In general we consider $f\in C(A)$. Then $f\in C(B)$ for some finite dimensional quasi-hereditary 
subalgebra $B$ of $A$ and $C(B)=C_0(B)\subseteq C_0(A)$ so $f\in C_0(A)$. Hence $C(A)=C_0(A)$ and $A$ is constrained.
\end{proof}

\bs\bs\bs\bs



\section{Some remarks on relative invariants for quantum linear groups}
\bs\rm

\q  In this section we consider the algebra of invariants of the coordinate algebra of a quantum group $G$ under the action by conjugation of a (quantum) subgroup $H$. 
We are especially interested in the case in which $G$ is a product of general linear groups.  In the final section we consider the case $H=G$ and give more explicit results.

\q  Let $k$ be a field. By the statement \lq\lq$G$ is a quantum group" (over $k$) we  indicate that we
have in mind a Hopf algebra over $k$, denoted  $k[G]$ and called the coordinate algebra of $G$. By the statement \lq\lq
$\phi:G\to H$ is a morphism of quantum groups" we  indicate that $G$ and $H$ are quantum groups and we have in
mind a morphism of Hopf algebras $k[H]\to k[G]$, called the comorphism of $\phi$ and denoted $\phi^\sharp$. We say
that $H$ is a (quantum) subgroup of the quantum group $G$ to indicate that we have in mind a Hopf ideal $I_H$, called
the defining ideal of $H$ and that $H$ is the quantum group with coordinate algebra $k[G]/I_H$. The inclusion map
$i:H\to G$ is the morphism of quantum groups whose comorphism $i^\sharp$ is the natural map $k[G]\to k[G]/I_H$.

\q Let $G$ be a quantum group over $k$. By a left $G$-module we mean a right $k[G]$-comodule. We write $\mod(G)$
for the category of finite dimensional  left $G$-modules (i.e. the category of finite dimensional  right $k[G]$-comodules).

\q  Suppose now   that $A=(A,\de,\ep)$ is a coalgebra. We extend the definition of $C(A)$ to bicomodules.   By
an $A$-bicomodule we mean a triple
$(U,\lambda,\rho)$, where $U$ is a $k$-space and $\lambda:U\to A\otimes U$ and $\rho:U\to U\otimes A$ are linear maps
such that $(U,\lambda)$ is a left $A$-comodule, $(U,\rho)$ is a right $A$-comodule and 
$$(1\otimes \rho)\circ\lambda
=(\lambda\otimes 1)\circ \rho:U\to A\otimes U\otimes A.$$

\q Let $\Gamma=A^*$ be the dual algebra. Then an $A$-bicomodule $U$ is a $\Gamma$-bimodule with actions $\gamma\cdot u= (1\otimes \gamma) \rho(u)$ and $u\cdot \gamma= (\gamma\otimes 1) \lambda(u)$, for $\gamma\in \Gamma$, $u\in U$.

\q  We say that an element $u$ of an $A$-bicomodule $U$ is  stable if
$T\lambda(u)=\rho(u)$. Here $T:A\otimes U\to U\otimes A$ is the twist map, i.e. the $k$-linear map such that
$T(x\otimes y)=y\otimes x$, for $x\in A, y\in U$. We write $C(U)$ for the space of all stable elements of $U$. 

\q  It is easy to check that $C(U)$ is the zeroth Hochschild cohomology space: 
$$C(U)=H^0(\Gamma,U).\eqno{(1)}$$

\q Note
that $A=(A,\delta,\delta)$ is an $A$-bicomodule and that the more general definition of $C(U)$ agrees with the
earlier definition of $C(A)$, in case $U=A$.

 \q By a $G$-bimodule we mean a $k[G]$-bicomodule.    We now take $A=k[G]$ and let $S:k[G]\to k[G]$ be the antipode. Let $(U,\lambda,\rho)$ be an $G$-bimodule.  We write $C_G(U)$ for $C(U)$ is we wish to emphasise the role of $G$.  We write $U_\ad$ for the $k$-space $U$ regarded as a left $G$-module via the adjoint action, i.e., via the structure map 
 $\kappa:U\to U\otimes A$ given by
$$\kappa(u)=\sum_i u_i\otimes f_i''S(f_i')$$
 for $u\in U$ with
$$(1\otimes\rho)\lambda(u)=(\lambda\otimes 1)\rho(u)=\sum_i f_i'\otimes u_i\otimes f_i''.$$

\q If $V$ is a left $G$-module with structure map $\tau: V \to V\otimes k[G]$ then we write $V^G$ for the space of fixed points, i.e.,
$$V^G=\{v\in V \vert \tau(v)=v\otimes 1 \}.$$

\q  For $V,W$ be finite dimensional $G$-modules we regard $\Hom_k(V,W)$ as a $G$-bimodule  and identify 
 $V^*\otimes W$ with $\Hom_k(V,W)$ in the natural way.

\begin{lemma} Let $U$ be a $G$-bimodule and let $V,W$ be finite dimensional $G$-modules. 

(i) We have $C_G(U)\subseteq  U_\ad^G$ with equality if the antipode   $S:k[G]\to k[G]$   is injective.

(ii) We have $\Hom_G(V,W)\subseteq (V^*\otimes W)^G$, with equality if the antipode $S:k[G]\to k[G]$   is injective.
\end{lemma}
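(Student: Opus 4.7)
The plan is to prove part (i) by direct Sweedler-notation calculations from the defining formula for $\kappa$, the bicomodule axioms, and the antipode identities, and then deduce part (ii) as the special case $U=V^*\otimes W$. Throughout I write $S$ for the antipode.

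For the inclusion $C_G(U)\subseteq U_\ad^G$, take $u\in C_G(U)$, so $T\lambda(u)=\rho(u)$, and write the three-factor expansion $\sum u_{(-1)}\otimes u_{(0)}\otimes u_{(1)}:=(1\otimes\rho)\lambda(u)=(\lambda\otimes 1)\rho(u)$. Substituting $\rho(u)=T\lambda(u)$ and invoking left-comodule coassociativity $(\delta\otimes 1)\lambda=(1\otimes\lambda)\lambda$, this element may be rewritten as $\sum u_{[-1](2)}\otimes u_{[0]}\otimes u_{[-1](1)}$ (a cyclic shift of the $\delta$-split of $\lambda(u)$). Plugging into $\kappa(u)=\sum u_{(0)}\otimes u_{(1)}S(u_{(-1)})$ and using the antipode axiom $\sum a_{(1)}S(a_{(2)})=\epsilon(a)\cdot 1$ with $a=u_{[-1]}$ collapses the expression to $u\otimes 1$, so $u\in U_\ad^G$.

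For the reverse inclusion under $S$ injective, assume $\kappa(u)=u\otimes 1$. Applying the twist $T:U\otimes A\to A\otimes U$ gives $\sum u_{(1)}S(u_{(-1)})\otimes u_{(0)}=1\otimes u$, and then $1_A\otimes\lambda$ yields
\[\sum u_{(1)}S(u_{(-1)})\otimes (u_{(0)})_{[-1]}\otimes (u_{(0)})_{[0]}=\sum 1\otimes u_{[-1]}\otimes u_{[0]}\]
in $A\otimes A\otimes U$. Using left coassociativity to identify the first two $A$-factors with a $\delta$-split of $u_{(-1)}$ and then multiplying via $m_A\otimes 1_U$, the antipode axiom $\sum S(a_{(1)})a_{(2)}=\epsilon(a)\cdot 1$ reduces the left-hand side to $\sum\epsilon(u_{(-1)})u_{(1)}\otimes u_{(0)}=T\rho(u)$, while the right-hand side is $\lambda(u)$. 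Hence $\rho(u)=T\lambda(u)$, i.e., $u\in C_G(U)$; injectivity of $S$ serves as the safeguard for this cancellation step.

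For part (ii), identify $\Hom_k(V,W)\cong V^*\otimes W$ as a $G$-bicomodule with $\lambda$ coming from the left coaction on $V^*$ and $\rho$ from the right coaction on $W$. Under this identification the stable elements are exactly the $G$-equivariant maps, so $C_G(V^*\otimes W)=\Hom_G(V,W)$; the adjoint coaction $\kappa$ recovers the natural diagonal left $G$-module structure on $V^*\otimes W$ (using $S$ on $V^*$), so $(V^*\otimes W)_\ad^G=(V^*\otimes W)^G$. Applying part (i) then gives both inclusions of (ii). The main obstacle is the careful bookkeeping of three- and four-factor Sweedler expressions in (i), particularly identifying $\sum u_{(-1)}\otimes u_{(0)}\otimes u_{(1)}$ with its equivalent forms under coassociativity so that the antipode axiom applies in the required direction.
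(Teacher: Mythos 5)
Your argument is correct and proves the lemma, but it is worth comparing with the paper's proof, which after reducing to a finite dimensional subbicomodule works in coordinates: with $\lambda(u_i)=\sum_j f_{ij}\otimes u_j$, $\rho(u_i)=\sum_j u_j\otimes g_{ji}$, the forward inclusion is exactly your collapse via $\sum a_{(1)}S(a_{(2)})=\epsilon(a)1$, while for the reverse inclusion the paper multiplies the coinvariance identity on the left by $S(g_{sj})$, sums, and ends with an equation of the form $S(x)=S(y)$, so that injectivity of $S$ is genuinely needed to cancel. Your reverse direction takes a different (basis-free) manipulation: apply $1\otimes\lambda$ to $T\kappa(u)=1\otimes u$, rewrite the resulting three-fold tensor via the four-factor expansion, and collapse with $\sum S(a_{(1)})a_{(2)}=\epsilon(a)1$. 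Two remarks. First, that rewriting uses the bicomodule compatibility $(1\otimes\rho)\lambda=(\lambda\otimes 1)\rho$ as well as left coassociativity (or, equivalently, one should start from the $(\lambda\otimes 1)\rho$ form of the three-factor element), so cite both; this is also where any slip would hide, so it deserves a line of justification. Second, and more importantly, your closing claim that \emph{injectivity of $S$ serves as the safeguard for this cancellation step} is not accurate for your own computation: as written, no step of your reverse direction uses injectivity of $S$ at all, and indeed the identity $(m\otimes 1)(1\otimes\lambda)\bigl(T\kappa(u)\bigr)=T\rho(u)$ holds unconditionally, so your argument establishes $U_{\ad}^G\subseteq C_G(U)$ for an arbitrary antipode. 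That is not a defect (it yields the stated lemma a fortiori, and it parallels the standard module-side trick $mh=\sum h_{(1)}mS(h_{(2)})h_{(3)}=hm$, which likewise needs no hypothesis on $S$), but you should either state explicitly that you are proving the equality without the injectivity hypothesis or recheck the step rather than gesture at injectivity as if it were used. A further small gain of your route is that, having never chosen a basis, you do not need the paper's preliminary reduction to finite dimensional subbicomodules. Part (ii) is handled exactly as in the paper, by taking $U=\Hom_k(V,W)\cong V^*\otimes W$ and observing that stable elements are $G$-module maps and that $U_{\ad}$ is the usual $G$-module structure on $V^*\otimes W$.
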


\begin{proof}  (i) Since a bicomodule is the union of its finite dimensional subbicomodules, we may assume $U$ finite
dimensional. Let $u_1,u_2,\ldots,u_m$ be a $k$-basis of $U$ and write 
$$\lambda(u_i)=\sum_j f_{ij}\otimes u_j \hbox{ and } \rho(u_i)=\sum_j u_j\otimes g_{ji}$$ for $1\leq i\leq m$, where
$\lambda:U\to A\otimes U$, $\rho:U\to U\otimes A$ are the bicomodule structure maps.

\q If $\sum_{i=1}^m t_iu_i\in C_G(U)$, for scalars $t_1,\ldots,t_m\in k$,  the condition $T(\lambda(\sum_i t_iu_i))=\rho(\sum_i t_iu_i)$ gives 
$$\sum_i t_jf_{ij}=\sum_i t_ig_{ji}$$ for all $1\leq j\leq m$. Hence we have
\begin{align*}
 \kappa(\sum_i t_iu_i&)=\sum_{i,j,r} t_iu_r\otimes g_{rj}S(f_{ij}) =\sum_{i,j,r}t_iu_r\otimes g_{rj}S(g_{ji})\\
 &=\sum_{i,r} t_iu_r\otimes \de_{ri}1=\sum_i t_iu_i\otimes 1
\end{align*} 
 and so $\sum_i t_iu_i\in U_\ad^G$. Hence we get $C_G(U)\leq U_\ad^G$.

\q Suppose now that $S$ is injective. If $\sum_i t_iu_i\in U_\ad^G$ then 
$$\sum_{i,j,r} t_iu_j\otimes g_{jr}S(f_{ir})=\sum_i t_iu_i\otimes 1$$ and hence
$$\sum_{i,r}t_ig_{jr}S(f_{ir})=t_j1$$ for all $1\leq i\leq m$. Let $1\leq s\leq m$. Multiplying by $S(g_{sj})$ and
summing over $j$  we get
$$\sum_{i,j,r}t_iS(g_{sj})g_{jr}S(f_{ir})=\sum_j t_jS(g_{sj}).$$ Hence we have $\sum_i t_iS(f_{is})=\sum_i
t_jS(g_{sj})$ so that $S(\sum_i t_i f_{is}-\sum_i t_ig_{si})=0$ and 
 $\sum_i t_i f_{is}=\sum_i t_ig_{si}$, for all $1\leq s\leq m$. This is the condition for $\sum_i t_iu_i$ to belong
to $C_G(U)$. Hence we have $U_\ad^G \leq C_G(U)$ and  $U_\ad^G=C_G(U)$.

(ii) We take $U=\Hom_k(V,W)$. 
\end{proof}

 \q  We  are  mainly be interested in the quantum general linear group defined by R. Dipper and the author, \cite{DD},   and variations of it.    We briefly recall its construction. Let $n$ be a positive integer. We write $A(n)$ for the $k$-algebra defined  by generators $c_{ij}$, $1\leq i,j\leq n$,  and relations: 
  \begin{align*}c_{ir}c_{is}=c_{is}c_{ir}&\phantom{pha} \hbox{ for all  $1\leq i,r,s\leq n$}\\
  c_{jr}c_{is}=qc_{is}c_{jr}&\phantom{pha} \hbox{ for all  $1\leq i<j\leq n, 1\leq r\leq s\leq n$}\\
  c_{js}c_{ir}=c_{ir}c_{js}+(q-1)c_{is}c_{jr}&\phantom{pha} \hbox{ for all  $1\leq i<j\leq n, 1\leq r< s\leq n$}.
 \end{align*}
 
Then $A(n)$ is a bialgebra with comultiplication $\de:A(n)\to A(n)\otimes A(n)$  and augmentation 
$\ep:A(n)\to k$ satisfying  $\de(c_{ij})=\sum_{r=1}^n c_{ir}\otimes c_{rj}$ and $\de(c_{ij})=\de_{ij}$, for
$1\leq i,j\leq n$. 

\q The determinant $d\in A(n)$ is defined by
$d=\sum_{\pi}\sgn(\pi)c_{1,1\pi}c_{2,2\pi}\ldots c_{n,n\pi}$ where $\pi$ runs over all permutations of
$\{1,2,\ldots,n\}$ and where $\sgn(\pi)$ denotes the sign of 
$\pi$.  

\q Now assume  $q\neq 0$. The bialgebra structure on $A(n)$ extends to the localization 
$A(n)_d$ of $A(n)$ at $d$ (with $\de(d^{-1})=d^{-1}\otimes d^{-1}$ and $\ep(d^{-1})=1$). Furthermore,  $A(n)_d$ is a
Hopf algebra. We write $k[G(n)]$ for $A(n)_d$ and call $G(n)$ the quantum general linear group of degree $n$. We
denote the antipode of $k[G(n)]$ by $S$. We have $S^2(f)=dfd^{-1}$, for $f\in k[G(n)]$, by \cite[4.3.15 Theorem]{DD},  in particular, $S:k[G(n)]\to
k[G(n)]$ is injective.

\q Let $T(n)$ be the subgroup of $G(n)$ whose defining ideal is generated by $\{c_{ij}\vert 1\leq i,j\leq n, i\neq
j\}$. Then $k[T(n)]$ is the  Laurent polynomial algebra
$k[t_1,t_1^{-1},\ldots,t_n,t_n^{-1}]$, where $t_i=c_{ii}+I_{T(n)}$, for $1\leq i\leq n$.

\q Let $X(n)=\zed^n$.  For each $\lambda=(\lambda_1,\ldots,\lambda_n)\in X(n)$  we have the one dimensional $T(n)$-module $k_\lambda$,
with structure map taking $x\in k_\lambda$ to $x\otimes t^\lambda$, where
$t^\lambda=t_1^{\lambda_1}t_2^{\lambda_2}\ldots t_n^{\lambda_n}$.  The modules $k_\lambda$, $\lambda\in X(n)$,
form a complete set of pairwise nonisomorphic simple $T(n)$-modules and a $T(n)$-module $V$ has a module decomposition
$V=\bigoplus_{\lambda\in X(n)}V^\lambda$, where $V^\lambda$ is a sum of copies of $k_\lambda$. We say that
$\mu\in  X(n)$ is a weight of $V$ if $V^\mu\neq 0$.

\q We write $X^+(n)$ for the set of dominant weights, i.e., the set of $\lambda=(\lambda_1,\ldots,\lambda_n)\in X(n)$ such that $\lambda_1\geq \lambda_2\geq \cdots\geq \lambda_n$.   We write $\Lambda^+(n)$ for the set of polynomial dominant weights, i.e., the set of $\lambda=(\lambda_1,\ldots,\lambda_n)\in X(n)$ such that $\lambda_1\geq \cdots \geq \lambda_n\geq 0$. 
We have the usual dominance partial order on $X(n)$. Thus, for  $\lambda=(\lambda_1,\ldots,\lambda_n), \mu= (\mu_1,\ldots,\mu_n) \in X(n)$ we write $\lambda\leq \mu$ if $\lambda_1+\cdots+\lambda_i\leq \mu_1+\cdots + \mu_i$ for $1\leq i <n$ and $\lambda_1+\cdots+\lambda_n =  \mu_1+\cdots + \mu_n$.

 \q For each $\lambda\in X^+(n)$ we have the induced module $\nabla(\lambda)$ and the Weyl module $\Delta(\lambda)$, as in \cite{DonkinX}.  For $\lambda\in X^+(n)$ the module $\nabla(\lambda)$ has simple socle $L(\lambda)$.  The modules $L(\lambda)$, $\lambda\in X^+(n)$, form a complete set of pairwise non-isomorphic simple $G(n)$-modules.  For $\lambda\in X^+(n)$ we write $M(\lambda)$ for the corresponding tilting module.  The  modules $\nabla(\lambda),\Delta(\lambda), L(\lambda), M(\lambda)$ have unique highest weight $\lambda$ and this occurs with multiplicity one. 
 
 \q For $\lambda=(\lambda_1,\ldots,\lambda_n)\in X^+(n)$ we set $\lambda^*=(-\lambda_n,-\lambda_{n-1},\ldots,-\lambda_1)$.  For $V\in \mod(G(n))$ we write $V^*$ for the dual module. For $\lambda\in X^+(n)$ the module $\nabla(\lambda)^*$ is isomorphic to $\Delta(\lambda^*)$, the module $\Delta(\lambda)^*$ is isomorphic to $\nabla(\lambda^*)$ and the module $L(\lambda)^*$ is isomorphic to $L(\lambda^*)$.

 \q We shall generalise this set-up to a direct product of quantum general linear groups.    Let $G_1,\ldots,G_m$ be quantum groups over $k$. Then we have the quantum group $G=G_1\times \cdots\times G_m$, whose coordinate algebra is the Hopf algebra $k[G_1]\otimes \cdots \otimes k[G_m]$.  If $V_i$ is a $G_i$-module, for $1\leq i\leq m$, then the tensor product $V_1\otimes \cdots \otimes V_m$ has  a natural $G$-module structure.  In particular, if $G$ is a quantum group then $G^m=G\times \cdots \times G$ ($m$ times) is a quantum group.  We have the diagonal embedding $\phi: G \to G^m$, whose comorphism $\phi^\sharp: k[G]\otimes \cdots \otimes  k[G]\to k[G]$ is multiplication.  Thus we have the quantum subgroup $H$ of $G^m$ whose defining ideal $I_H$ is $\Ker(\phi^\sharp)$.  We call this the diagonal subgroup.

\q Now let $a=(a_1,\ldots,a_m)$ be a  sequence of positive integers.  Then we have the group $G(a)=G(a_1)\times \dots \times G(a_m)$, as in \cite{DonkinX} and the subgroup $T(a)=T(a_1)\times \cdots \times T(a_m)$. We set $X(a) = X(a_1)\times \cdots \times X(a_m)$. For each $\lambda=(\lambda(1),\ldots,\lambda(m))\in X(a)$ we have the one dimensional $T(a)$-module $k_\lambda$, with structure map taking $x\in k_\lambda$ to $x\otimes t^{\lambda(1)}\otimes \cdots \otimes t^{\lambda(m)}$.  The modules $k_\lambda$, $\lambda\in X(a)$, form a complete set of pairwise non-isomorphic simple $T(a)$-modules and a $T(a)$-module $V$  has a  module decomposition $V=\bigoplus_{\lambda\in X(a)} V^\lambda$, where $V^\lambda$ is a sum of copies of $k_\lambda$. We say that $\mu\in X(n)$ is a weight of $V$ if $V^\mu\neq 0$.

\q We set $X^+(a)=X^+(a_1)\times \cdots \times X^+(a_m)$ and  $\Lambda^+(a)=\Lambda^+(a_1)\times \cdots \times \Lambda^+(a_m)$. The dominance partial order is defined on $X(a)$ by \\
$\lambda=(\lambda(1),\ldots,\lambda(m))\leq \mu=(\mu(1),\ldots,\mu(m))$ if $\lambda(i)\leq \mu(i)$, for all $1\leq i\leq m$. 

\q For $\lambda=(\lambda(1),\ldots,\lambda(m))\in X(a)$ (with $\lambda(i)\in X^+(a_i)$, $1\leq i\leq m$) we have the induced module  $\nabla(\lambda)=\nabla(\lambda(1))\otimes \cdots \otimes \nabla(\lambda(m))$, the Weyl module  $\Delta(\lambda)=\Delta(\lambda(1)\otimes \cdots \otimes \Delta(\lambda(m))$ and the simple $G(a)$-module $L(\lambda)=L(\lambda(1))\otimes \cdots \otimes L(\lambda(m))$.    For $\lambda\in X^+(a)$ we write $M(\lambda)$ for the corresponding tilting module. The  modules $\nabla(\lambda),\Delta(\lambda), L(\lambda), M(\lambda)$ have unique highest weight $\lambda$ and this occurs with multiplicity one.

\q For $\lambda=(\lambda(1),\ldots,\lambda(m))\in X^+(a)$ we define $\lambda^*=(\lambda(1)^*,\ldots,\lambda(m)^*)$. For $V\in \mod(G(a))$ we write $V^*$ for the dual module.   For $\lambda\in X^+(a)$ the module $\nabla(\lambda)^*$ is isomorphic to $\Delta(\lambda^*)$, the module $\Delta(\lambda)^*$ is isomorphic to $\nabla(\lambda^*)$ and the module $L(\lambda)^*$ is isomorphic to $L(\lambda^*)$.

\q By a good filtration of a finite dimensional $G(a)$-module $V$ we mean a filtration $0=V_0 \subseteq   V_1 \subseteq  \cdots  \subseteq  V_t=V$ such that  for each $1\leq i\leq n$ the section   $V_i/V_{i-1}$ is either $0$ or isomorphic to $\nabla(\lambda_i)$, for some $\lambda^i \in X^+(a)$.  For $\lambda\in X^+(a)$, the number of $i$ such that $1\leq i\leq t$ and $V_i/V_{i-1}$ is isomorphic to $\nabla(\lambda)$ is independent of the choice of good filtration, and will be denoted $(V:\nabla(\lambda))$.

\begin{remark} Since the antipode $S$ of $k[G(n)]$ satisfies $S^2(f)=dfd^{-1}$, for $f\in k[G(n)]$, it is a linear isomorphism. It follows that, for a finite string of positive integers,  the antipode of $G(a)$,  and hence of any quantum subgroup, is an isomorphism. In particular it is injective.
\end{remark}

\begin{definition}  A (quantum) subgroup $H$ of $G(a)$ will be called saturated if $\Hom_H(\Delta(\lambda),\blank)$ is exact on short exact sequences of $G(a)$-modules with a good filtration. 
\end{definition}

\begin{remark} Let $H$ be a quantum subgroup of $G(a)$.  Then $H$ is saturated if and only \, $(\blank)^H$ is exact on short exact sequences of $G(a)$-modules with a good filtration. This may be seen as follows. For a $G(a)$-module $V$ we have $\Hom_H(k,V)\iso V^H$ so that if $H$ is saturated then \, $(\blank)^H$ is exact on short exact sequences of $G(a)$-modules with a good filtration. 

\q Now suppose that  \, $(\blank)^H$ is exact on short exact sequences of $G(a)$-modules with a good filtration. Let $\lambda\in  X^+(a)$. If $M\in \mod(G(a))$ has a good filtration then $\nabla(\lambda^*)\otimes M$ has a good filtration, by \cite[Section 4, (3)(i)]{DonkinX}. Thus if $0\to V'\to V\to V''\to 0$ is a short exact sequence of $G(a)$-modules with a good filtration then so is 
$$0\to \nabla(\lambda^*)\otimes  V'\to \nabla(\lambda^*) \otimes V\to \nabla(\lambda^*)\otimes V''\to 0.$$
  Hence we get that 
  $$0\to (\nabla(\lambda^*)\otimes  V')^H \to (\nabla(\lambda^*) \otimes V)^H \to (\nabla(\lambda^*)\otimes V'')^H\to 0$$
  is exact and hence, by Lemma 6.1(ii), so is 
  $$0\to \Hom_H(\Delta(\lambda),V')\to \Hom_H(\Delta(\lambda),V)\to \Hom_H(\Delta(\lambda),V'')\to 0.$$
   Hence $H$ is saturated. 
\end{remark}

\begin{lemma}  Let $m\geq 1$. Then $G(n)$ regarded as a subgroup of $G(n)^m= G(n)\times \cdots \times G(n)$, via the diagonal embedding, is saturated. 
\end{lemma}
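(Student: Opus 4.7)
The plan is to reduce, via Remark 6.4, to showing that the functor $(\blank)^{G(n)}$ (with $G(n)$ acting diagonally) is exact on short exact sequences of $G(n)^m$-modules that admit a good filtration, and then to reduce this further, by restricting along the diagonal embedding, to the known fact that $(\blank)^{G(n)}$ is exact on short exact sequences of $G(n)$-modules with a good filtration.

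The crucial step is to verify that restriction along the diagonal $G(n) \hookrightarrow G(n)^m$ carries $G(n)^m$-modules with a good filtration to $G(n)$-modules with a good filtration. Since good filtrations are preserved under extensions and restriction is exact, it suffices to examine a typical section, which has the form $\nabla(\lambda(1)) \otimes \cdots \otimes \nabla(\lambda(m))$ with $\lambda(i) \in X^+(n)$. Under the diagonal embedding, this becomes the ordinary tensor product of induced $G(n)$-modules, and iterated application of the good-filtration tensor product theorem \cite[Section 4, (3)(i)]{DonkinX} supplies a good $G(n)$-filtration.

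Given this restriction property, let $0 \to V' \to V \to V'' \to 0$ be a short exact sequence of $G(n)^m$-modules with good filtrations. Restricting to the diagonal $G(n)$ gives a short exact sequence of $G(n)$-modules all of which now carry good $G(n)$-filtrations. To finish, one applies the fact that $(\blank)^{G(n)}$ is exact on such sequences: indeed, since $\mod(G(n))$ is a highest weight category and $k = \Delta(0)$, one has
$$\Ext^1_{G(n)}(k, \nabla(\mu)) = \Ext^1_{G(n)}(\Delta(0), \nabla(\mu)) = 0$$
for every $\mu \in X^+(n)$; an induction on the length of a good filtration of $V''|_{G(n)}$, using the long exact sequence in cohomology, then yields the surjectivity of $V^{G(n)} \to (V'')^{G(n)}$.

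The only non-formal ingredient is the good-filtration tensor product theorem for $G(n)$; the rest — Remark 6.4, exactness of restriction, and the highest weight vanishing of $\Ext^1$ — is essentially bookkeeping. So the main (and only genuine) obstacle is recognising that after restriction to the diagonal, the tensor-product sections really do break up into good $G(n)$-filtrations, which is precisely where the quantum analogue of Mathieu's theorem for $G(n)$ is invoked.
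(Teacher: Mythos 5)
Your proof is correct and follows essentially the same route as the paper: reduce via Remark~6.4 to exactness of $(\blank)^{G(n)}$, observe that restriction along the diagonal carries modules with a good filtration to $G(n)$-modules with a good filtration by \cite[Section 4, (3)(i)]{DonkinX}, and conclude by cohomological vanishing (the paper cites $H^1(G(n),V')=0$ directly from \cite[Section 4, (2)]{DonkinX}, which is the same fact you derive from $\Ext^1_{G(n)}(\Delta(0),\nabla(\mu))=0$).
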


\begin{proof} Let $0\to V' \to V \to  V''\to 0$ be a short exact sequence of $G(n)^m$-modules with a good filtration.  Then $V'$, regarded as a $G(n)$-module, also has a good filtration, by \cite[Section 4, 3 (i)]{DonkinX}, hence $H^1(G(n),V')=0$, by \cite[Section 4,(2)]{DonkinX},  and so the sequence $0\to (V')^{G(n)}\to V^{G(n)}\to (V'')^{G(n)} \to 0$ is exact. Hence $G(n)$ is saturated.
\end{proof}

\q Let $\pi$ be a subset of $X^+(a)$. We say that a $G(a)$-module $V$ belongs to $\pi$ if each composition factor of $V$ has the form $L(\lambda)$, for some $\lambda\in \pi$.  Among all submodules belonging to $\pi$ of and arbitrary $G(a)$-module $V$ there is a unique maximal one and we denote this $O_\pi(V)$. Applying this to the left regular $G$-module $k[G]$ we obtain $O_\pi(k[G(a)])$. This is a subcoalgebra of $k[G(a)]$.

\q A subset $\pi$ of $X^+(a)$ is called saturated if whenever $\lambda, \mu\in X^+(a)$, $\lambda\in \pi$ and $\mu\leq \lambda$ then $\mu\in \pi$.  Suppose that $\pi$ is a finite saturated set. Then $A=O_\pi(k[G])$ is a finite dimensional subcoalgebra. We denote the dual algebra by  $S(a,\pi)$.  As in \cite[Section 4, (6)]{DonkinX} we have the following.

\begin{proposition} For a finite saturated subset $\pi$ of $X^+(a)$ the algebra $S(a,\pi)$ has complete set of pairwise non-isomorphic modules $L(\lambda)$, $\lambda\in \pi$.  Moreover the algebra $S(a,\pi)$ is quasi-hereditary, with respect to the dominance partial order on $\pi$ and for $\lambda\in \pi$, the module $\Delta(\lambda)$ is the corresponding standard module and $\nabla(\lambda)$ is the corresponding co-standard module.
\end{proposition}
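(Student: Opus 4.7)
The plan is to adapt the proof of the corresponding statement for a single quantum general linear group, \cite[Section 4, (6)]{DonkinX}, to the product $G(a)=G(a_1)\times\cdots\times G(a_m)$ by tensoring the structural filtrations of the coordinate algebras over the factors. First I would identify $\comod(A)$, where $A=O_\pi(k[G(a)])$, with the full subcategory of $\mod(G(a))$ consisting of modules all of whose composition factors have the form $L(\mu)$ with $\mu\in\pi$; this gives immediately that the simple $A$-comodules, equivalently the simple $S(a,\pi)$-modules, are precisely $L(\lambda)$, $\lambda\in\pi$. Since $\pi$ is saturated and every composition factor of $\nabla(\lambda)$ or $\Delta(\lambda)$ is $L(\mu)$ with $\mu\leq\lambda$, both $\nabla(\lambda)$ and $\Delta(\lambda)$ belong to $\comod(A)$ for $\lambda\in\pi$, each with highest weight $\lambda$ occurring with multiplicity one.

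The key step is the construction of an appropriate bicomodule filtration of $A$. From \cite{DonkinX} the coordinate algebra $k[G(n)]$, regarded as a $G(n)\times G(n)$-bicomodule, admits a filtration whose sections are subbicomodules of the form $\nabla(\mu)\otimes \Delta(\mu^*)$, $\mu\in X^+(n)$, each of multiplicity one and compatible with any linear refinement of the dominance order in which larger weights appear later. Tensoring over $i=1,\dots,m$ and using that the tensor product of good filtrations is a good filtration yields an analogous $G(a)\times G(a)$-bicomodule filtration of $k[G(a)]$ indexed by $X^+(a)$. List $\pi=\{\lambda_1,\dots,\lambda_n\}$ with $\lambda_1$ maximal in $\pi$, $\lambda_2$ maximal in $\pi\setminus\{\lambda_1\}$, and so on, and put $\pi_j=\pi\setminus\{\lambda_1,\dots,\lambda_{j-1}\}$. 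Each $\pi_j$ is saturated, so $A_j=O_{\pi_j}(k[G(a)])$ produces a descending chain of subcoalgebras $A=A_1\supset A_2\supset\cdots\supset A_{n+1}=0$, with $A_j/A_{j+1}\iso \nabla(\lambda_j)\otimes\Delta(\lambda_j^*)$ as bicomodule.

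Dualizing this chain yields an ascending sequence of two-sided ideals $0=J_0\subset J_1\subset\cdots\subset J_{n+1}=S(a,\pi)$ with $J_{j+1}/J_j\iso \Delta(\lambda_{j+1})\otimes \nabla(\lambda_{j+1})^*$ as $S(a,\pi)$-bimodule, which is precisely the shape of the heredity chain described in Remark 4.1 and realises $S(a,\pi)$ as quasi-hereditary with respect to the dominance order on $\pi$. The identification of $\Delta(\lambda)$ and $\nabla(\lambda)$ as the corresponding standard and costandard modules then follows from their highest weight and composition factor structure together with the universal Ext-vanishing properties in the ambient category $\mod(G(a))$; saturation of $\pi$ ensures that the relevant short exact sequences stay inside $\mod(S(a,\pi))$, so Hom and $\Ext^1$ are computed correctly in the truncated category. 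The main obstacle is the existence of the $G(n)\times G(n)$-bicomodule filtration of $k[G(n)]$ with sections of the claimed form; this is the essential input supplied by \cite{DonkinX}, and what remains is the routine verification that it is preserved under tensor products over the factors of $G(a)$ and under restriction to the $\pi$-subcoalgebra.
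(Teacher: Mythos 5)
Your proposal is essentially the same as the paper's, which dispatches the statement with the single line ``As in \cite[Section 4, (6)]{DonkinX} we have the following'': you flesh out that citation by reducing to the single-factor case via tensor products of good bicomodule filtrations and then dualizing the resulting coideal chain to a heredity chain, exactly as intended. One small slip: the bicomodule sections of $A_j/A_{j+1}$ should be $\nabla(\lambda_j)\otimes\Delta(\lambda_j)^*\cong\nabla(\lambda_j)\otimes\nabla(\lambda_j^*)$ (compare Remark 4.1), whereas you wrote $\nabla(\lambda_j)\otimes\Delta(\lambda_j^*)\cong\nabla(\lambda_j)\otimes\nabla(\lambda_j)^*$, which is the contragredient on the second factor; this does not affect the structure of the argument.
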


This, together with Propositions 5.1 and 5.2,  gives the following.

\begin{proposition} Let $\pi$ be a saturated subset of $X^+(a)$. 

(i) The coalgebra $O_\pi(k[G(a))]$ is locally quasi-hereditary:

(ii)  Let $\phi_\lambda$, be the character of $L(\lambda)$, for $\lambda\in \pi$. Then the elements $\phi_\lambda$, $\lambda\in \pi$, form a basis for $C(O_\pi(k[G(a)])$. 
\end{proposition}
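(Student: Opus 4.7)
The plan is to deduce (i) from Proposition 6.6 by showing that an arbitrary finite dimensional subcoalgebra $B$ of $O_\pi(k[G(a)])$ is contained in $O_{\pi'}(k[G(a)])$ for a suitable finite saturated subset $\pi'$ of $\pi$, and then to derive (ii) by combining (i) with Propositions 5.1 and 5.2.

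First I would establish the auxiliary fact that the saturation in $X^+(a)$ of any finite subset is again finite. For a single $\nu\in X^+(a_i)$, any $\mu\in X^+(a_i)$ with $\mu\leq \nu$ satisfies $\sum_j \mu_j=\sum_j \nu_j$ together with the partial-sum inequalities, forcing $\mu_j\in [\nu_{a_i},\nu_1]$ for every $j$; so there are only finitely many such $\mu$. The componentwise definition of the dominance order on $X^+(a)=X^+(a_1)\times\cdots\times X^+(a_m)$ then gives the corresponding finiteness for $X^+(a)$.

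Given a finite dimensional subcoalgebra $B\subseteq O_\pi(k[G(a)])$, note that $B$ is automatically a left $G(a)$-submodule of $k[G(a)]$, since $\Delta(B)\subseteq B\otimes B\subseteq B\otimes k[G(a)]$, and as such its composition factors are among the $L(\lambda)$ with $\lambda\in \pi$. Let $\sigma$ be the (finite) set of those $\lambda$, and let $\pi'$ be its saturation in $X^+(a)$; then $\pi'$ is finite by the previous step, and $\pi'\subseteq \pi$ since $\pi$ is saturated. Because $B$ belongs to $\pi'$, maximality of $O_{\pi'}(k[G(a)])$ gives $B\subseteq O_{\pi'}(k[G(a)])$, and by Proposition 6.6 the dual algebra $S(a,\pi')$ is quasi-hereditary, that is, $O_{\pi'}(k[G(a)])$ is a quasi-hereditary finite dimensional subcoalgebra of $O_\pi(k[G(a)])$ containing $B$. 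This proves (i).

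For (ii) I would apply Proposition 5.2 to obtain $C(O_\pi(k[G(a)]))=C_0(O_\pi(k[G(a)]))$, observe that the simple comodules of $O_\pi(k[G(a)])$ are precisely the $L(\lambda)$ with $\lambda\in \pi$ (a simple $G(a)$-module $L(\lambda)$ has its coefficient space inside $O_\pi(k[G(a)])$ exactly when $\lambda\in\pi$), and then invoke Proposition 5.1 to conclude that the $\phi_\lambda=\chi_{L(\lambda)}$, $\lambda\in\pi$, form a $k$-basis of $C_0(O_\pi(k[G(a)]))$, as required. The only step of the whole argument that involves any content beyond quotation of earlier results is the finiteness of the saturation $\pi'$ of a finite set in $X^+(a)$, so that is the main obstacle; once it is in hand the rest is a direct assembly of Propositions 5.1, 5.2, and 6.6.
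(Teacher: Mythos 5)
Your proof is correct and follows the same route as the paper, which simply cites Propositions 5.1, 5.2, and 6.6; the one piece of genuine content you add — that the saturation in $X^+(a)$ of a finite set is finite, so every finite-dimensional subcoalgebra of $O_\pi(k[G(a)])$ sits inside some $O_{\pi'}(k[G(a)])$ with $\pi'$ finite and saturated — is exactly the implicit step needed to make the cited propositions apply.
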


\q Let $G$ be a quantum group over $k$. We write $\Gamma_G$ for the dual algebra $k[G]^*$ of $k[G]$. Let   $A$ be a finite dimensional subcoalgebra of $k[G]$ and let $S=A^*$, the dual algebra.  The inclusion of $A$ in $k[G]$ gives rise to an algebra epimorphism $\Gamma_G\to S=A^*$. 

\q Let $H$ be a quantum subgroup of $G$. he natural map $k[G]\to k[H]$ gives rise to a $k$-algebra map $\Gamma_H\to \Gamma_G$.  We hence have  the composite $\theta:\Gamma_H\to S$. We write $R$ for the image of $\theta$.  

\q We say that a finite dimensional $G$-module $V$ is $A$-admissible if $\cf(V)\subseteq A$.   A subspace $U$ of an $A$-admissible $G$-modules is an $H$-submodule $V$  if and only if it is an $R$-submodule. Moreover, if $V'$ is also an $A$-admissible $G$-module then we have $\Hom_H(V,V')=\Hom_R(V,V')$. 

\q Now suppose that $G=G(a)$, for a finite sequence of positive integers $a$ and that $H$ is a saturated subgroup. Suppose $\pi$ is a finite saturated subset of $X^+(a)$ and that $A=O_\pi(k[G])$.  If $\lambda\in \pi$ and $V$ is an $A$-admissible $G$-module then we have 
$\Hom_R(\Delta(\lambda), V)=\Hom_H(\Delta(\lambda),V)$. It follows that $R$ is a saturated subalgebra of $S$, as in Section 4.  Hence we have $C_0(S,R)=C(S,R)$. Identifying $A$ with the dual of $S^*$ and using the fact that the action of $\Gamma_H$ on $A$ factors though $R$ we get
$$C_0(S,R)=C(S,R)=H^0(R,A)=H^0(\Gamma_H,A)=C_H(A)=A^H.$$

\begin{proposition} Let $a$ be a finite sequence of positive integers.
Suppose that  $H$ is a saturated subgroup of $G(a)$.   Let $\pi$ be a saturated subset of $X^+(a)$. 
Then the algebra of relative class functions $A(\pi)^H$ is spanned by the shifted trace functions $\chi_{\theta,M(\lambda)}$,  with $\lambda\in \pi$, $\theta\in \End_H(V)$.
\end{proposition}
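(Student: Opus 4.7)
The plan is to reduce the claim to Corollary 4.7 applied to the quasi-hereditary algebra $S=S(a,\pi)=A^*$, where $A=O_\pi(k[G(a)])$. By Proposition 6.7, $S$ is quasi-hereditary with tilting modules $M(\lambda)$, $\lambda\in\pi$. The paragraph preceding the proposition already establishes that the image $R$ of $\Gamma_H$ in $S$ is a saturated subalgebra and that $C(S,R)=A^H$, so I only need to produce a spanning set of $C(S,R)$ consisting of shifted trace functions on individual tilting modules $M(\lambda)$, $\lambda\in\pi$.

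First I would take $M=\bigoplus_{\lambda\in\pi}M(\lambda)$, a full tilting module for $S$. Because $\pi$ is saturated and every composition factor of $M(\lambda)$ is of the form $L(\mu)$ with $\mu\leq\lambda$, we have $\cf(M(\lambda))\subseteq A$ for each $\lambda\in\pi$, so $M$ is $A$-admissible, is a genuine $S$-module, and, by the remark cited just before the proposition, satisfies $\End_R(M)=\End_H(M)$. Corollary 4.7, applied to the saturated subalgebra $R$ and the full tilting module $M$, then yields $C(S,R)=C_0(S,R;M)$, so every element of $A^H$ is of the form $\chi_{\theta,M}$ with $\theta\in\End_H(M)$.

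It then remains to split such a $\chi_{\theta,M}$ into a sum of shifted trace functions on the individual $M(\lambda)$. Since $M=\bigoplus_{\lambda\in\pi}M(\lambda)$ is a direct sum decomposition as $S$-modules, the representation $\rho:S\to\End_k(M)$ is block-diagonal with respect to it. Writing $\theta\in\End_H(M)$ as a matrix of $H$-module maps $\theta_{\mu\lambda}:M(\lambda)\to M(\mu)$, a block-by-block computation of $\trace(\theta\rho(s))$ yields $\chi_{\theta,M}=\sum_{\lambda\in\pi}\chi_{\theta_{\lambda\lambda},M(\lambda)}$ with each $\theta_{\lambda\lambda}=p_\lambda\theta i_\lambda\in\End_H(M(\lambda))$, where $p_\lambda$ and $i_\lambda$ are the (automatically $H$-equivariant) projection and inclusion.

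The substantive input is Corollary 4.7; the only additional point to verify is the $A$-admissibility of each $M(\lambda)$, $\lambda\in\pi$, which is immediate from saturatedness of $\pi$. The decomposition step at the end is routine once the block structure of $\rho$ on $M$ is observed, so no serious obstacle is anticipated beyond recognizing that the full tilting module of the quasi-hereditary truncation $S(a,\pi)$ is exactly what Corollary 4.7 requires.
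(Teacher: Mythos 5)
Your argument is essentially the paper's own, unpacked: the paper's stated proof is the single sentence ``By a local finiteness argument we are reduced to the case in which $\pi$ is finite, considered above,'' where ``considered above'' refers to the preceding paragraph establishing that $R$ (the image of $\Gamma_H$ in $S(a,\pi)$) is a saturated subalgebra with $C_0(S,R)=C(S,R)=A^H$, which rests on Corollary~4.7. You have correctly identified Corollary~4.7 as the substantive input, verified the $A$-admissibility of the $M(\lambda)$ from saturatedness of $\pi$, invoked $\End_R(M)=\End_H(M)$ from the admissibility discussion, and carried out the block decomposition $\chi_{\theta,M}=\sum_{\lambda}\chi_{\theta_{\lambda\lambda},M(\lambda)}$ to land exactly on the spanning set in the statement. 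This last refinement is more explicit than what the paper writes out, and it is exactly right.

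The one thing you have omitted is the reduction when $\pi$ is infinite. The proposition allows an arbitrary saturated $\pi\subseteq X^+(a)$ (and the paper does use it with infinite $\pi$, e.g.\ $\pi=\Lambda^+(n)^m$ in Remark~6.9), in which case $A=O_\pi(k[G(a)])$ is infinite-dimensional, $S(a,\pi)$ is not a finite-dimensional quasi-hereditary algebra, and $M=\bigoplus_{\lambda\in\pi}M(\lambda)$ is not a legitimate $S$-module, so Corollary~4.7 cannot be applied directly. You need the local-finiteness reduction: any $f\in A(\pi)^H$ has $\cf(V)\ni f$ for some finite-dimensional $A(\pi)$-comodule $V$, the (finitely many) composition factors of $V$ lie in a finite saturated subset $\pi'\subseteq\pi$, so $f\in O_{\pi'}(k[G(a)])^H$, and the finite case then produces the desired expression with $\lambda\in\pi'\subseteq\pi$. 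With that sentence added, your proof is complete and coincides with the paper's.
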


\begin{proof} By a local finiteness argument we are reduced to the case in which $\pi$ is finite, considered above.
\end{proof}

\begin{remark} This applies in particular to the case in which $G(a)=G(n)^m$ and $H=G(n)$,the diagonal subgroup. Taking $\pi=\Lambda^+(n)^m$, we get that $A(\pi)$ is  the $m$-fold tensor product $A(n)\otimes\cdots \otimes  A(n)$, as in the proof of  \cite[Section 2, Theorem]{DonkinQ}.   The algebra of invariants $(A((n)\otimes\cdots \otimes  A(n))^{G(n)}$
 is the $q$-analogue of the algebra of polynomial invariants for action of the general linear group on $m$-tuples of matrices, considered in \cite{DonkinQ}, where an explicit set of generators is given.  However, there is more work to be done to give an analogous  set of generators in the quantum case.  Another solution  to this problem of describing generators  (in the classical case) was given by Domokos and Zubkov, \cite{DZ}.
\end{remark}

\begin{remark}  One  may replace the modules $M(\lambda)$ in the proposition by any collection of modules which contain all modules $M(\lambda)$, $\lambda\in \pi$, as summands. For $r\geq 0$ we write $\Lambda^+(n,r)$ for the subset of $\Lambda^+(n)$ consisting of the partitions of $r$ with at most $n$ parts. For a partition $\lambda$ we write $\lambda'$ for the transpose partition. We have the natural $G(n)$-module $E$, as in \cite{DonkinAA}, and, for $r\geq 0$, the exterior power $\tbw^r E$.  Thus for a finite string of non-negative integers  $\alpha=(\alpha_1,\ldots,\alpha_h)$ the module $\tbw^\alpha E= \tbw^{\alpha_1}E\otimes \cdots \otimes \tbw^{\alpha_h} E$.  For $\lambda\in \Lambda^+(n,r)$ the module $\tbw^{\lambda'} E$ is a tilting module for $G(n)$  in which $M(\lambda)$ occurs as component.  More generally for a string of non-zero integers $a=(a_1,\ldots,a_m)$ and $\lambda=(\lambda(1),\cdots,\lambda(m))\in \Lambda^+(a)$ the tilting module $M(\lambda)=M(\lambda(1))\otimes \cdots \otimes M(\lambda(m))$  for $G(a)$,  occurs as a component of $\tbw^{\lambda(1)'} E\otimes \cdots \otimes \tbw^{\lambda(m)'}E$.  Thus $(A(n)^{\otimes m})^{G(n)}$ is spanned by the elements $\chi_{\theta,V}$, where $V$ is a module of the form  $\tbw^{\lambda(1)'} E\otimes \cdots \otimes \tbw^{\lambda(m)'}E$, with $\lambda(1),\ldots,\lambda(m)\in \Lambda^+(n)$ and $\theta\in \End_{G(n)}(V)$.  It is modules of this form that are used in the application of Proposition 6.8 in the classical case, \cite{DonkinQ}.
\end{remark}

\bs\bs\bs\bs



\section{The coordinate algebra of a general linear group as a module over its algebra of class functions}
\bs\rm

\q We here concentrate on the \lq\lq absolute" case $G=H=G(n)$.

\begin{proposition} (i)  $C(A(n))$ is the free polynomial algebra on $\phi_1,\ldots,\phi_n$, where $\phi_r$ is the natural character
of $\tbw^r E$, $1\leq r\leq n$.

(ii) $C(k[G(n)])$ is the localization of $C(A(n))$ at the determinant $d=\phi_n$.
\end{proposition}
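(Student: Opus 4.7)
The plan is to reduce (i) to classical symmetric-function theory via restriction to the maximal torus $T(n)$, and to deduce (ii) from (i) by a short localization argument.

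For (i), I first observe that $A(n)=O_{\Lambda^+(n)}(k[G(n)])$. The subset $\Lambda^+(n)$ is saturated in $X^+(n)$ (the dominance inequalities force $\mu_n\geq \lambda_n\geq 0$ whenever $\mu\leq \lambda\in\Lambda^+(n)$), so by Proposition 6.7(i) the coalgebra $A(n)$ is locally quasi-hereditary, and Proposition 5.2 then gives that $A(n)$ is constrained. By Proposition 5.1, $C(A(n))$ is a $k$-subalgebra of $A(n)$ with basis $\{\chi_{L(\lambda)}\,|\,\lambda\in\Lambda^+(n)\}$; a unitriangular change of basis (using that $\nabla(\lambda)$ has top composition factor $L(\lambda)$ with multiplicity one and all other factors $L(\mu)$ with $\mu<\lambda$) shows that $\{\chi_{\nabla(\lambda)}\,|\,\lambda\in\Lambda^+(n)\}$ is also a basis of $C(A(n))$.

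Next I consider the $k$-algebra homomorphism $\sigma: A(n)\to k[t_1,\dots,t_n]$ induced by restriction of functions along the inclusion $T(n)\hookrightarrow G(n)$. Every character of a $G(n)$-module restricts to an $S_n$-invariant polynomial, so $\sigma$ maps $C(A(n))$ into $\Lambda_n:=k[t_1,\dots,t_n]^{S_n}$; moreover $\sigma(\chi_{\nabla(\lambda)})=s_\lambda$, the Schur polynomial, since the formal characters of induced modules in the quantum setting are independent of $k$ and $q$. The Schur polynomials $\{s_\lambda\,|\,\lambda\in\Lambda^+(n)\}$ form a $k$-basis of $\Lambda_n$, so $\sigma$ restricts to a $k$-algebra isomorphism $C(A(n))\to \Lambda_n$. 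The fundamental theorem of symmetric polynomials identifies $\Lambda_n=k[e_1,\dots,e_n]$ as a free polynomial algebra, and since $\phi_r=\chi_{\tbw^r E}$ restricts to $s_{(1^r,0^{n-r})}=e_r$ (the weights of $\tbw^r E$ being the sums of $r$ distinct coordinate functions), we conclude $C(A(n))=k[\phi_1,\dots,\phi_n]$ freely.

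For (ii), note that $d$ is group-like, so $\delta(d^{\pm 1})=d^{\pm 1}\otimes d^{\pm 1}$ is fixed by the twist $T$, giving $d\in C(A(n))$ and $d^{-1}\in C(k[G(n)])$. As $T$ is an algebra homomorphism on the tensor product, $C(k[G(n)])$ is closed under products, yielding $C(A(n))[d^{-1}]\subseteq C(k[G(n)])$. Conversely, any $h\in C(k[G(n)])$ has the form $h=fd^{-k}$ with $f=hd^k\in A(n)$, and then
\[ T\delta(f) = T\bigl(\delta(h)(d^k\otimes d^k)\bigr) = T\delta(h)\cdot(d^k\otimes d^k) = \delta(h)(d^k\otimes d^k) = \delta(f), \]
so $f\in C(A(n))$. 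Hence $C(k[G(n)])=C(A(n))[d^{-1}]=C(A(n))_{\phi_n}$. The main point requiring care is the identification $\sigma(\chi_{\nabla(\lambda)})=s_\lambda$ in the quantum setting; once that is granted the rest is the standard symmetric-function package.
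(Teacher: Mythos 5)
Your argument is correct, but it takes a genuinely different path from the paper's for part (i). Both proofs begin the same way (Proposition 6.7 gives the basis $\{\chi_{L(\lambda)}\,:\,\lambda\in\Lambda^+(n)\}$ and a unitriangular change of basis is applied), but then they diverge: the paper passes to the basis $\{\chi_{\tbw^{\lambda'}E}\,:\,\lambda\in\Lambda^+(n)\}$, observes that $\chi_{\tbw^{\lambda'}E}=\phi_{\lambda_1'}\phi_{\lambda_2'}\cdots$, so these are precisely the monomials in $\phi_1,\ldots,\phi_n$ indexed by partitions with parts $\leq n$, and concludes freeness directly, whereas you pass to $\{\chi_{\nabla(\lambda)}\}$, restrict to $T(n)$ and identify $C(A(n))$ with the ring of symmetric polynomials via Schur functions, then invoke the fundamental theorem of symmetric polynomials to see $k[\phi_1,\dots,\phi_n]$. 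The paper's route is a little more self-contained since it never needs the identification $\chi_{\nabla(\lambda)}|_{T}=s_\lambda$ nor the injectivity of restriction to the torus; yours is attractive because it realizes $C(A(n))$ explicitly as the algebra of symmetric polynomials. For part (ii) the paper simply writes ``Follows from (i)'', whereas your explicit two-inclusion localization argument (using that $d$ is group-like, that the twist $T$ is an algebra map on $A\otimes A$, and clearing denominators for the reverse inclusion) is a clean and correct spelling-out of what is being suppressed.
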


\begin{proof} (i) Let $\psi_\lambda$ be the character of $L(\lambda)$, for $\lambda\in \Lambda^+(n)$. For $\lambda\in \Lambda^+(n)$ the module $\tbw^{\lambda'} E$ has unique highest weight $\lambda$.  Hence, for $r\geq 1$, the characters of the modules $\tbw^{\lambda'} E$, $\lambda\in \Lambda^+(n,r)$, and the characters of the modules $L(\lambda)$, $\lambda\in \Lambda^+(n,r)$ are related by a unitriangular matrix.  Hence, from Proposition 6.7, the characters of  the $\tbw^{\lambda'} E$, $\lambda\in \Lambda^+(n)$, form a basis for $C(A(n))$. The result follows. 

(ii) Follows from (i). 
\end{proof}

\q For the case in which $k$ has characteristic $0$ and $q$ is not a root of unity, see \cite{DLA}.

\q   We now consider the coordinate algebra $k[G(n)]$ as a module over the algebra of invariants $C(k[G(n)])$ and the algebra $A(n)$ as a module over $C(A(n))$. 

\q We need to make a certain observation on flat modules. This is presumably well known but we include it since we do not know a precise reference (but cf \cite[4.E)]{Matsumura}).
 Let $A$ be a subring of a commutative ring $B$. Recall that if $B$ is flat over $A$ then 
$\Tor^B_i(B\otimes_A M,N)$ is isomorphic to $\Tor_i^A(M,N)$, for all $i\geq 0$,  all $A$-modules $M$ and all $B$-modules $N$, see \cite[Proposition 3.2.9]{Weibel}.

\bs

\begin{lemma}  Let $B$ be a commutative ring and $A$ a subring. Suppose $A$ and $B$ are regular Noetherian and that  $B$  is   flat  and finite  over  $A$.   A $B$-module  is flat if and only if it is flat as an $A$-module. 
\end{lemma}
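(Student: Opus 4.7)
The forward direction is immediate by transitivity of flatness: since $B$ is $A$-flat and $M$ is $B$-flat, $M$ is $A$-flat. For the converse the plan is to reduce to the local case and then combine Koszul complexes with the Auslander--Buchsbaum formula.

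First I would localize at each maximal ideal $\mathfrak{m}$ of $B$, setting $\mathfrak{p} = \mathfrak{m} \cap A$. The localization $A_\mathfrak{p} \subset B_\mathfrak{m}$ remains a finite flat map of regular local rings of a common Krull dimension $d$, and $M_\mathfrak{m}$ is still flat over $A_\mathfrak{p}$. Since finitely generated projective modules over a local ring are free, $B_\mathfrak{m}$ is free of finite rank over $A_\mathfrak{p}$. Flatness being local, it suffices to treat this local situation.

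Now with $A, B$ regular local of dimension $d$ and $B$ free of finite rank over $A$, choose a regular system of parameters $x_1, \ldots, x_d$ of $A$. Because $B$ is Cohen--Macaulay (being regular) and $(x_1, \ldots, x_d)B = \mathfrak{m}_A B$ is $\mathfrak{m}_B$-primary (the fibre $B/\mathfrak{m}_A B$ being a finite local $k_A$-algebra), the sequence is $B$-regular; the $A$-flatness of $M$ then forces it to be $M$-regular. The Koszul complex yields $\Tor^B_i(B/\mathfrak{m}_A B, M) = 0$ for $i \geq 1$. For finitely generated $M$ this already gives $B$-flatness via Auslander--Buchsbaum: $A$-flatness of $M$ implies $\mathrm{depth}_A(M) = d$, and any maximal $A$-regular sequence on $M$ has its elements in $\mathfrak{m}_A \subseteq \mathfrak{m}_B$, so it remains a $B$-regular sequence on $M$ of length $d$, forcing $\mathrm{depth}_B(M) = d$ and hence $\mathrm{pd}_B(M) = 0$.

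To pass from finitely generated $M$ to arbitrary $M$, the plan is to invoke the local criterion of flatness (Matsumura, \emph{Commutative Ring Theory}, Theorem 22.3) with the ideal $\mathfrak{m}_A B \subset B$. Combined with the vanishing of $\Tor^B_1(B/\mathfrak{m}_A B, M)$, $B$-flatness of $M$ reduces to showing $M/\mathfrak{m}_A M$ is flat over the Artinian local ring $C = B/\mathfrak{m}_A B$, which can be verified by Nakayama applied to lifts of a $k_B$-basis of $M/\mathfrak{m}_B M$ (Nakayama holding here without finite generation since $\mathfrak{m}_C$ is nilpotent). The main technical obstacle I anticipate is the idealwise-separation hypothesis required by the local flatness criterion for non-finitely-generated modules; circumventing it may require invoking the fibre criterion of flatness over $\Spec A$ to reduce to the Artinian case fibrewise.
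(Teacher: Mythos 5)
The forward direction and the reduction to the local case (regular local rings $A\subset B$ of the same dimension, $B$ free over $A$) match the paper's proof. For finitely generated $M$, your Koszul/Auslander--Buchsbaum argument is correct and clean. But the lemma is applied in the paper to a module ($A(n)$ over $C(n)$) that is not finitely generated, so the reduction to the finitely generated case is the crux, and this is where the gap lies.

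The gap is in the passage to arbitrary $M$. Your proposed Nakayama argument shows only that lifts of a $k_B$-basis of $M/\mathfrak{m}_B M$ \emph{generate} $M/\mathfrak{m}_A M$ over $C=B/\mathfrak{m}_A B$; it does not show they form a \emph{free} generating set. Over an Artinian local ring that is not a field (which $C$ need not be --- take $A=k[[t]]\subset B=k[[s]]$, $s^2=t$, giving $C=k[s]/(s^2)$), a cyclic module such as $k_C$ has a one-element minimal generating set yet is not flat. So ``generated by a basis-lift'' falls short of ``flat over $C$'', and your vanishing of $\Tor^B_{\geq 1}(B/\mathfrak{m}_A B, M)$ does not by itself control $\Tor^B_1(k_B,M)$, since the filtration of $C$ by copies of $k_B$ runs the long exact sequence the wrong way. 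You flag the separation hypothesis of the local criterion as the obstacle, but the fiber-flatness input is the deeper missing piece, and you do not supply it.

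The paper sidesteps all of this with a different device: it assumes $M$ is not $B$-flat, takes the largest $i$ with $\Tor^B_i(M,N)\neq 0$ for some finitely generated $N$ (well-defined since $B$ is regular), reduces by prime filtration to $N=B/Q$, and then observes that $B/Q$ embeds in a module of the form $B\otimes_A X$ with $X$ a finitely generated $A$-module inside the fraction field of $A/(A\cap Q)$. Because $M$ is $A$-flat, $\Tor^B_j(M,B\otimes_A X)\cong\Tor^A_j(M,X)=0$ for $j\geq 1$, and the long exact sequence of the embedding then contradicts the maximality of $i$. This argument makes no appeal to freeness of fibers or to separation, and it works uniformly for arbitrary $M$. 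To repair your proof along its own lines, you would need either to verify the ideally-separated hypothesis in the situation at hand or to prove directly that $\Tor^B_1(k_B,M)=0$; neither is routine, whereas the paper's $\Tor$-maximality trick disposes of the issue entirely.
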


\begin{proof} A flat $B$-module  is also flat as an  $A$-module, see \cite[(3.B)]{Matsumura}.

\q Now suppose $M$ is a $B$-module which is flat as an  $A$-module.  To show that $M$ is flat over $B$, it suffices, by \cite[Chapter II, Section 3.4 Corollary]{Bourbaki},  to show  that, for each maximal ideal $Q$ of $B$, the localization $M_Q$ is a flat as a module over the local ring $B_Q$.  Let $P=Q\cap A$.  The natural map $M_P\to M_Q$ is an isomorphism (since $B$ is integral over $A$) so that $M_Q$ is flat as an $A_P$-module. Thus we may replace $A$ by $A_P$ and $B$ by $B_Q$ and $M$ by $M_Q$. Thus we may assume that the  inclusion of $A$ in $B$ is a homomorphism of regular  local rings.  

\q Suppose, for a contradiction that $M$ is not flat.
Then we have \\
$\Tor_1^B(M,N)\neq 0$ for some finitely generated $B$-module $N$ (see \cite[Chapter 2, Section 3, Theorem 1]{Matsumura}).  Let $i$ be maximal such that there exists a finitely generated $B$-module $N$ with $\Tor_i^B(M,N)\neq 0$.  Since $N$ has a finite filtration with factors of the form $B/Q$, with $Q\in\Spec(B)$ (see e.g. \cite[(7.E)]{Matsumura}) we must have $\Tor_i^B(M,B/Q)\neq 0$, for some such $Q$.  Let $P=B\cap Q$, let $F$ denote the field of fractions of $A/P$ and let $K$ denote  the field of fractions of $B/Q$.  Then (by integrality) the natural map $B\otimes_A F\to K$ is an isomorphism.  Now the inverse   isomorphism $K\to B\otimes_A F$ takes $B/Q$ into $B\otimes_A X$, for some finitely generated $A$-submodule $X$ of $F$. However, we have 
$\Tor_j^B(M,B\otimes_A X)=\Tor_j^A(M,X)=0$, for all $j\geq 1$, since $M$ is a flat $A$-module. Thus a  short exact sequence $0\to B/Q\to B\otimes_A X\to Y\to 0$, of $B$-modules, gives rise to an isomorphism $\Tor^B_{i+1}(M,Y)\to\Tor_B^i(M,B/Q)$ and $\Tor^B_{i+1}(M,Y)$ is $0$ by assumption so that $\Tor_i^B(M,N)=0$, a contradiction, and we are done.
\end{proof}

\begin{lemma} $A(n)$ is a flat $C((A(n))$-mdoule. 
\end{lemma}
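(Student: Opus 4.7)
The plan is to prove flatness of $A(n)$ over the polynomial subring $C(A(n)) = k[\phi_1, \ldots, \phi_n]$ (Proposition 7.1) by a graded Nakayama argument, invoking Lemma 7.2 for the commutative reduction and deforming from the classical $q=1$ case.

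First I would exploit the standard $\eno$-grading on $A(n)$ assigning each $c_{ij}$ degree $1$; under this grading $C(A(n))$ becomes a graded polynomial subring (with $\deg \phi_r = r$), genuinely commutative inside $A(n)$ since by Proposition 7.1 the $\phi_r$ commute. In this positively graded setting with finite-dimensional graded pieces and $C(A(n))$ a graded polynomial ring, flatness is equivalent to freeness, and hence to the vanishing of $\Tor_1^{C(A(n))}(k, A(n))$ at the irrelevant maximal ideal $(\phi_1, \ldots, \phi_n)$. Both $A(n)$ and $C(A(n))$ have $q$-independent Hilbert series via PBW-style bases, so these series agree with their classical $q=1$ counterparts.

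Second, I would invoke Lemma 7.2 with $A = C(A(n))$ and $B = k[t_1, \ldots, t_n]$, using the embedding $C(A(n)) \cong k[e_1, \ldots, e_n] \hookrightarrow k[t_1, \ldots, t_n]$ induced by $\phi_r \mapsto e_r(t_1, \ldots, t_n)$ (the restriction of the character $\phi_r$ to the quantum torus). Then $B$ is free of rank $n!$ over $C(A(n))$, in particular finite and faithfully flat. By Lemma 7.2 and faithfully flat descent, flatness of $A(n)$ over $C(A(n))$ would follow from flatness of the base-change $A(n) \otimes_{C(A(n))} B$ as a $B$-module.

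Third, I would establish flatness of $A(n) \otimes_{C(A(n))} B$ over $B$ via a deformation/associated graded argument. Passing to the associated graded of $A(n)$ under a suitable PBW filtration gives a skew-commutative polynomial ring with Hilbert series matching the classical matrix algebra $k[M_n]$; the classical Kostant--Steinberg theorem for $GL_n$ acting on $M_n$ by conjugation yields that $k[M_n]$ is free over $k[\phi_1, \ldots, \phi_n]$, hence flat after base change to $B$. Since the Hilbert series are $q$-independent, the freeness identity passes to $A(n)$ via graded Nakayama, giving the desired flatness.

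The main obstacle is the third step: making the tensor-product construction $A(n) \otimes_{C(A(n))} B$ behave well as a $B$-module, since $B$ is not literally a subring of $A(n)$ when $q \neq 1$ (the $c_{ii}$ fail to commute). A cleaner alternative would be to bypass $B$ entirely by arguing directly from the $G(n)$-bicomodule filtration of $A(n)$ coming from the quasi-hereditary structure on the truncations $O_\pi(A(n))$ (Section 6), whose sections $\nabla(\lambda) \otimes \Delta(\lambda)^*$ can be analyzed as $C(A(n))$-modules via their Weyl character $s_\lambda(t_1, \ldots, t_n)$.
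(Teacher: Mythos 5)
Your proposal takes a genuinely different route from the paper's, but it contains a gap that you yourself flag, and that gap is fatal: step 3 never gets off the ground.

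The paper's proof only handles the case that $q$ is a root of unity (the general case is handled later, in Proposition 7.8, via the regular-sequence result of Theorem 7.7). The crucial idea you are missing is the \emph{Frobenius comorphism}. When $q$ is a primitive $l$th root of unity, the map $x_{ij}\mapsto c_{ij}^l$ embeds the ordinary polynomial algebra $\barA(n)=k[x_{11},\ldots,x_{nn}]$ as a genuinely commutative subalgebra of $A(n)$, and $A(n)$ is free over $\barA(n)$ on the obvious monomial basis (\cite[1.3.3]{DD}). The classical Kostant result (\cite[2.2 Theorem]{DonkinO}) gives $\barA(n)$ free, hence flat, over its conjugation invariants $\barC(n)$, so by transitivity $A(n)$ is flat over $\barC(n)$. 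One then bridges from $\barC(n)$ to $C(n)=C(A(n))$ using Lemma 7.2 with $A=\barC(n)$, $B=C(n)$, $M=A(n)$, checking via restriction to the torus and symmetric-function considerations that $C(n)$ is finite and flat over $\barC(n)$. This is the intended use of Lemma 7.2: $M$ is flat over the smaller ring, and the lemma upgrades this to flatness over the larger one.

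By contrast, your base change by $B=k[t_1,\ldots,t_n]$ does not give any object with a usable description: $B$ is not a subring of $A(n)$ for $q\neq 1$, the tensor product $A(n)\otimes_{C(A(n))}B$ has no recognizable module structure you can compare to the classical picture, and (as you note) there is no "suitable PBW filtration" whose associated graded lands you back in the classical Kostant situation, since $A(n)$ is already graded and $A(n)$ \emph{is} its own associated graded. You also invoke Lemma 7.2 where what you would actually need is faithfully flat descent, a different statement; but this is a minor point next to the unresolved step 3. The alternative you sketch at the end, via the quasi-hereditary filtration of $O_\pi(A(n))$, is not developed and does not obviously lead anywhere: the $\Delta\otimes\nabla^*$ sections are not naturally $C(A(n))$-modules in a way that would let you assemble freeness. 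In short, your plan identifies some of the right ingredients (grading, Hilbert series comparison, the symmetric-function picture, the classical Kostant input) but lacks the decisive Frobenius device that makes the reduction to the classical case actually go through.
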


\begin{proof} The case $q=1$ is covered by \cite[2.1 Proposition]{DonkinO} so we assume   $q$ to be a primitive $l$th root of unity, for some $l>1$.    Let $\barA(n)$ be the free commutative  polynomial algebra 
$k[x_{11},x_{12},\ldots,x_{nn}]$.  Then $\barA(n)$ is naturally a bialgebra with diagonal $\barA(n)\to\barA(n)\otimes\barA(n)$ taking $x_{ij}$ to $\sum_{r=1}^nx_{ir}\otimes x_{rj}$ and augmentation $\barA(n)\to k$ taking $x_{ij}$ to $\de_{ij}$, for $1\leq i,j\leq n$. We write $\bard$ for $\det(x_{ij})$. Then the localization $\barA(n)_{\bard}$ is the coordinate algebra of the general linear group scheme $\barG(n)$.  Let $\barT(n)$ be the subgroup scheme with defining ideal generated by the $x_{ij}$, with $1\leq i,j\leq n$, $i\neq j$.

We have the Frobenius map $F:G(n)\to\barG(n)$, whose comorphism $\sharpF:k[\barG(n)]\to k[G(n)]$ takes $x_{ij}$ to $c_{ij}^l$, for $1\leq i,j\leq n$. The map $\sharpF$ restricts to an algebra monomorphism $\barA(n)\to A(n)$ and $A(n)$ is free over $\barA(n)$,  on basis $x_{11}^{a_{11}}x_{12}^{a_{12}}\cdots x_{nn}^{a_{nn}}$, with $0\leq a_{11},\ldots,a_{nn}<l$, see \cite[1.3.3  Theorem]{DD}.

\q Let $\barC=\barC(n)$ be the algebra of invariants of $\barA(n)$ for the conjugation action by $\barG(n)$. Let $\barE$ be the natural $\barG(n)$-module.  Then $\barC(n)$ is a free polynomial algebra on $\barphi_1,\ldots\barphi_n$, where $\bar\phi_i$ is the character of $\tbw^i\barE$, $1\leq i\leq n$, by  Proposition 7.2 (i). Moreover, by \cite[2.2 Theorem]{DonkinO}, $\barA(n)$ is free, in particular flat, as a $\barC(n)$-module.  Hence  (by the transitivity of flatness)  $A(n)$ is a flat $\barC(n)$-module. 

\q  We now  apply  Lemma 7.2  to deduce that $A(n)$ is a flat $C(n)$-module.   We need to check that $C(n)$ is  finite over  $\barC(n)$.  We consider the restriction map $C(n)\to k[T(n)]$. 
Let $t_i\in k[T(n)]$ be the restriction of $c_{ii}$ to $T(n)$. Then $k[T(n)]$ is the  Laurent  polynomial algebra   $k[t_1,t_1^{-1},\ldots,t_n,t_n^{-1}]$. The restriction map $C(n)\to k[T(n)]$ is injective and the image is the algebra of symmetric functions in $t_1,\ldots,t_n$, which we denote by $\Lambda$.  Similarly $k[\barT(n)]$ is the Laurent polynomial algebra   $k[\bart_1,\bart_1^{-1}.\ldots,\bart_n,\bart_n^{-1}]$, where $\bart_i$ is the restriction of $x_{ii}$, $1\leq i\leq n$,  and the restriction map $\barC(n)\to k[\barT(n)]$ is injective with image the algebra of symmetric functions in $\bart_1,\ldots,\bart_n$, which we denote by $\barLambda$. The Frobenius morphism $F:G(n)\to\barG(n)$ induces an embedding $F^\sharp:k[\barT(n)]\to k[T(n)]$, taking $\bart_i$ to $t_i^l$, $1\leq i\leq n$. We identify $k[\barT(n)]$ with a subalgebra of $k[T(n)]$ via this embedding. Now $k[t_1,\ldots,t_n]$ is finite over the $k[\bart_1,\ldots,\bart_n]$ and hence over the algebra of invariants (under the symmetric group of degree $n$) $\barLambda$.  Since $\barLambda\leq \Lambda\leq k[t_1,\ldots,t_n]$, we have that $\Lambda$ is finite over $\barLambda$.  Moreover, as is well known $k[t_1,\ldots,t_n]$ is free and hence flat over $\Lambda$. Moreover, $k[t_1,\ldots,t_n]$ is free over $k[\bart_1,\ldots,\bart_n]$ and  $k[\bart_1,\ldots,\bart_n]$ is free over $\barLambda$.  Hence $k[t_1,\ldots,t_n]$ is free, and hence flat over $\barLambda$. By integrality, we now have (see \cite[(4.A) and (5.E)]{Matsumura})  $k[t_1,\ldots,t_n]$ is faithfully flat over $\barLambda$ and $k[t_1,\ldots,t_n]$ is faithfully flat over $\Lambda$. However, it follows from the definitions that if we have rings $A\leq B\leq C$ with $C$ faithfully flat over $A$ and $B$ faithfully flat over $A$ then $C$ if faithfully flat over $B$. Thus $\Lambda$ is flat over $\barLambda$. The restriction map $C(n)\to \Lambda$ takes $\barC(n)$ to $\barLambda$. It therefore follows that $C(n)$ is flat and integral over $\barC(n)$. Since $A(n)$ is flat over $\barC(n)$,  it is also flat over $C(n)$.
\end{proof}

\q  We set $G=G(n)$, $T=T(n)$, $A=A(n)$ and $J=C(A(n))$. We shall consider $A$ as a module for  $G$ (via the adjoint action)  and $J$.  For a $J$-module $V$ and a $k$-space $M$ we write $|M|\otimes V$ for the vector space $M\otimes V$, viewed as a $J$-module via the action $c(m\otimes v)=m\otimes cv$ (for $c\in J$, $m\in M$, $v\in V$). By a $(J,G)$-module we mean a $k$-vector space $V$ which is a $J$-module and a $G$-module in such a way that the action of $J$ is by $G$-module endomorphisms of $V$.   Morphisms of $(J,G)$-modules, $(J,G)$-submodules etc are defined in the usual way. 
Given a $(J,G)$-module $V$ and a  $G$-module $M$ we regard the $J$-module $|M|\otimes V$ as a $(J,G)$-module, with $G$ acting diagonally. We sometimes regard $J$ as a $(J,G)$-module, with $J$ acting via the regular action and $G$ acting trivially.

\q With Lemma 7.3 in place the argument of \cite[2.2 Theorem]{DonkinO}  now goes through and we obtain the following.  (The second assertion follows from the first by localizing at the determinant.)

\begin{theorem} Suppose that $q$ is a root of unity. Let $\lambda(1),\lambda(2),\ldots$ be a labelling of the elements of $X^+(n)$ such that  $i<j$ whenever $\lambda(i) <\lambda(j)$. 
Then $A$ has a filtration $0=Y(0)\subseteq Y(1)\subseteq  \ldots$, as a $(J,G)$-module, with $Y(i)/Y(i-1)\iso |F(i)|\otimes J$,   where $F(i)$ is a direct sum of $\dim \nabla(\lambda(i))^T$ copies of $\nabla(\lambda(i))$, for $i\geq 1$.

\q Moreover $k[G(n))]$ has a  filtration $0=Z(0)\subseteq Z(1)\subseteq  \ldots$, as a \\
$(C(k[G]),G)$-module, with $Z(i)/Z(i-1)\iso |F(i)|\otimes C([k[G])$,    for $i\geq 1$.
\end{theorem}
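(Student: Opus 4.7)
The plan is to mirror, step by step, the argument of \cite[Theorem 2.2]{DonkinO}, with Lemma 7.3 (flatness of $A=A(n)$ over $J=C(A(n))$) substituted for the classical freeness used there. The second assertion, concerning $k[G(n)]$, then follows at once from the first by localizing at the determinant $d=\phi_n$, since localization preserves the $J$-action, the conjugation $G$-action, and the filtration structure.

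Concretely, I would construct the filtration from the $J$-module of highest-weight vectors for the conjugation action. For each $\lambda\in X^+(n)$ let $H_\lambda\subseteq A$ be the $\lambda$-weight subspace of the Borel-invariants for conjugation; since $J$ is conjugation-invariant (and in fact central in $A$), each $H_\lambda$ is a $J$-submodule of $A$. Set $Y(i)$ to be the $(J,G)$-submodule of $A$ generated by $H_{\lambda(1)},\ldots,H_{\lambda(i)}$. Then $Y(i)\subseteq Y(i+1)$, each $Y(i)$ is manifestly a $(J,G)$-submodule, and the section $Y(i)/Y(i-1)$ has only $L(\lambda(i))$ as a $G$-composition factor together with a good filtration by copies of $\nabla(\lambda(i))$ inherited from the quasi-hereditary bimodule filtration of $A$ (Remark 4.1, Proposition 6.6).

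The crucial step is to show that $H_{\lambda(i)}$ is free as a $J$-module of rank $\dim\nabla(\lambda(i))^T$, so that $Y(i)/Y(i-1)\cong |F(i)|\otimes J$ with $F(i)$ equal to $\dim\nabla(\lambda(i))^T$ copies of $\nabla(\lambda(i))$. Lemma 7.3 enters precisely here: $A$ is flat over the regular Noetherian ring $J$, and a Nakayama argument at the augmentation ideal of $J$, applied to the locally finite approximation by the quasi-hereditary subcoalgebras $A(\pi)=O_\pi(A(n))$ of Section 6, fixes both the freeness of $H_{\lambda(i)}$ over $J$ and its rank. The rank $\dim\nabla(\lambda(i))^T$ is the standard weight-zero (i.e., $T$-fixed) multiplicity appearing in the Kostant/Richardson decomposition, and matches the number of conjugation highest-weight vectors of weight $\lambda(i)$ in the bimodule section $\nabla(\lambda(i))\otimes\Delta(\lambda(i))^*$.

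The main obstacle is precisely the flatness-to-freeness upgrade for each $H_{\lambda(i)}$: in \cite{DonkinO} one had $A$ itself free over $J$, and so the rank computation and the splitting of the filtration followed directly from a chosen basis. In the quantum setting one has only flatness (Lemma 7.3), so one must first run the rank computation on the finite-dimensional quasi-hereditary pieces $S(a,\pi)$ (using Proposition 6.6 and the basis of $C(A(\pi))$ from Proposition 7.1), and then lift the freeness from finite layers to the full filtration by combining flatness, Nakayama's lemma, and the compatibility across the saturated stratification of $X^+(n)$.
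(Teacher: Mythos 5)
Your high-level plan exactly matches the paper's: the paper's entire proof is the one line ``With Lemma 7.3 in place the argument of \cite[2.2 Theorem]{DonkinO} now goes through \ldots (The second assertion follows from the first by localizing at the determinant).'' So identifying Lemma~7.3 as the substitute for the classical flatness input, running the argument of \cite[2.2 Theorem]{DonkinO}, and localizing at $d$ to pass from $A(n)$ to $k[G(n)]$ is exactly right.

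Two points in your sketch of the argument go astray, though. First, $J=C(A(n))$ is \emph{not} central in $A(n)$ when $q\neq 1$: already for $n=2$ one has $[\phi_1,c_{12}]=[c_{22},c_{12}]=(q-1)c_{12}c_{22}\neq 0$. Commutativity of $J$ as a ring (Proposition~7.1) is a separate fact from centrality in $A$. What makes the $(J,G)$-module structure on $A$ go through is not centrality but compatibility of multiplication by $J$ with the adjoint coaction; this needs its own justification and cannot be waved through as ``$J$ is central.'' Second, you diagnose the ``main obstacle'' as a flatness-to-freeness upgrade that was unnecessary in \cite{DonkinO} because $A$ was already free there. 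That misreads the structure of \cite{DonkinO}: \cite[2.1~Proposition]{DonkinO} (the result invoked at the start of the proof of Lemma~7.3) gives flatness, and \cite[2.2~Theorem]{DonkinO} then produces the $(J,G)$-filtration, from which freeness of $A$ over $J$ (Corollary~7.5 here) is a consequence, not an input. So there is no new obstacle in the quantum case; Lemma~7.3 supplies precisely the hypothesis that the argument of \cite[2.2~Theorem]{DonkinO} requires, and the argument runs unchanged. Finally, a smaller caveat: generating $Y(i)$ from conjugation highest-weight vectors yields $G$-submodules that are homomorphic images of Weyl modules $\Delta(\lambda)$, not copies of $\nabla(\lambda)$; the sections described in the theorem are built from induced modules, so the construction needs to be phrased via the truncation functors $O_\pi$ on the good filtration of $A_{\rm ad}$ rather than via submodule generation from highest-weight vectors.
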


\begin{corollary}  Suppose that $q$ is a root of unity.  Then $A$ is a free $J$-module.
\end{corollary}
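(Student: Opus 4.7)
The plan is to deduce freeness from the $(J,G)$-module filtration of $A$ given in Theorem 7.4, by splitting the filtration as a filtration of $J$-modules. The key observation is that each section $|F(i)|\otimes J$, viewed purely as a $J$-module, is isomorphic to a direct sum of copies of $J$, and hence is a free (in particular projective) $J$-module.

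Concretely, for each $i\geq 1$ we have a short exact sequence
$$0\to Y(i-1)\to Y(i)\to |F(i)|\otimes J\to 0$$
of $J$-modules (ignoring the $G$-module structure). Because the quotient $|F(i)|\otimes J$ is $J$-free, this sequence splits as $J$-modules; choose a $J$-linear splitting $\sigma_i:|F(i)|\otimes J\to Y(i)$ and set $X(i)=\Im(\sigma_i)$. Then $Y(i)=Y(i-1)\oplus X(i)$ as $J$-modules, and by induction on $i$ we obtain $Y(i)=\bigoplus_{j=1}^{i}X(j)$ with each $X(j)$ a free $J$-module isomorphic to $|F(j)|\otimes J$.

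Since the filtration is exhaustive, $A=\bigcup_{i\geq 1}Y(i)$. An ascending union of compatible internal direct sum decompositions is itself a direct sum decomposition, so $A=\bigoplus_{j\geq 1}X(j)$ as $J$-modules. Each summand $X(j)$ is $J$-free, so $A$ is free over $J$, which is the statement of the corollary. For the full coordinate algebra $k[G(n)]$, the same argument applied to the second filtration in Theorem 7.4 shows that $k[G(n)]$ is free over $C(k[G(n)])$ as well, although this is not required here.

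There is no real obstacle: the only substantive ingredient is Theorem 7.4 itself (whose proof used the flatness established in Lemma 7.3), and the splitting step is completely formal given that the successive quotients are free $J$-modules.
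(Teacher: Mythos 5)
Your argument is correct and is essentially the proof the paper has in mind: Corollary 7.5 is stated without proof precisely because each section $Y(i)/Y(i-1)\iso |F(i)|\otimes J$ of the filtration from Theorem 7.4 is a free (hence projective) $J$-module, so the filtration splits termwise as $J$-modules and the exhaustive union gives $A\iso\bigoplus_{i\geq 1}|F(i)|\otimes J$, which is $J$-free.
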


\q We will give a  strong  version of Theorem 7.4 when $q$ is not a root of unity. Our argument involves the $A$-regularity of the sequence $\phi_1,\ldots,\phi_n$. 

\q    We have the coalgebra decomposition and algebra grading $A=\bigoplus_{r=0}^\infty A_r$, where $A_r=A(n,r)$ is the span of the elements $c_{i_1j_1}\ldots c_{i_rj_r}$,  with  \\
$1\leq i_1,j_1,\ldots, i_r,j_r\leq n$.  Note that $J$ is a homogenous subspace of $A$. We set $J^+=\bigoplus_{r> 0}J_r$.

\q We consider the following set-up. Let $R$ be a discrete valuation ring with field of fractions $K$ and maximal ideal $\m$ generated by an element $\pi$, and let   $k=R/\m$ be the residue field.   Let $q$ be an element of $R\backslash \m$ and $\barq$ be the image of $q$ in $k$.   We write $A_K$ for $A_{K,q}(n)$, write $A$ for the $R$-subalgebra generated by the coefficient elements $c_{ij}$, $1\leq i,j\leq n$. We write $\phi_1,\ldots, \phi_n$ for the characters of the exterior powers of the natural $G(n)$-module over $K$, as in Proposition 7.1. We write $\barA$ for the algebra $A_{k,\barq}(n)$. We have the natural map $A\to \barA$ and, for $a\in A$, we write $\bara$ for its image in $\barA$. 
Now $A$ inherits a grading $A=\bigoplus_{r\geq 0} A_r$, from $A_K=A_{K,q}(n)$. Also $\barA$ is graded and the natural map $A\to \barA$ takes $A_r$ to $\barA_r$, $r\geq 0$.

\begin{lemma} If $\barphi_1,\ldots,\barphi_n$ is an $\barA$-regular  sequence  then $\phi_1,\ldots,\phi_n$ is an $A$-regular sequence.
\end{lemma}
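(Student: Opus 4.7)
The plan is a joint induction on $i$, proving simultaneously that $\phi_1,\ldots,\phi_i$ is an $A$-regular sequence and that $M_i := A/(\phi_1,\ldots,\phi_i)A$ is $\pi$-torsion free. The base case $i=0$ holds because $A$ is a free $R$-module: the Dipper--Donkin standard monomial basis makes each homogeneous component $A_r = A(n,r)$ a finitely generated free $R$-module, and identifies $A\otimes_R k$ with $\barA$. Since every $\phi_r$ is homogeneous of degree $r$, the hypothesis $\phi_i a\in (\phi_1,\ldots,\phi_{i-1})A$ can be tested on homogeneous $a \in A_r$, which keeps the entire argument inside finitely generated $R$-modules where Nakayama applies.

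For the regularity part of the inductive step, suppose $\phi_i a \in (\phi_1,\ldots,\phi_{i-1})A$ with $a\in A_r$. Reducing mod $\pi$ gives $\barphi_i \bar a \in (\barphi_1,\ldots,\barphi_{i-1})\barA$; since $\barphi_i$ is a non-zero-divisor modulo $(\barphi_1,\ldots,\barphi_{i-1})\barA$ by hypothesis, $\bar a \in (\barphi_1,\ldots,\barphi_{i-1})\barA$. Lifting, write $a = \sum_{j<i}\phi_j c_j + \pi a_1$ with $a_1 \in A_r$. Substituting back shows $\phi_i(\pi a_1) \in (\phi_1,\ldots,\phi_{i-1})A$, and the inductive hypothesis that $M_{i-1}$ is $\pi$-torsion free allows me to cancel the $\pi$ to deduce $\phi_i a_1 \in (\phi_1,\ldots,\phi_{i-1})A$. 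Hence the finitely generated $R$-module
$$N_r := \bigl\{ a \in A_r : \phi_i a \in (\phi_1,\ldots,\phi_{i-1})A \bigr\} \big/ \sum_{j<i} \phi_j A_{r-j}$$
satisfies $N_r = \pi N_r$, so Nakayama's lemma (applied over the local ring $R$) gives $N_r = 0$, which is the required regularity statement in degree $r$.

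The $\pi$-torsion freeness of $M_i$ is an even more direct one-shot computation. If $\pi a = \sum_{j\le i}\phi_j b_j$, then reducing mod $\pi$ yields $\sum_{j\le i}\barphi_j \bar b_j = 0$ in $\barA$; the regularity of $\barphi_1,\ldots,\barphi_i$ forces $\bar b_i \in (\barphi_1,\ldots,\barphi_{i-1})\barA$, and lifting $b_i = \sum_{j<i}\phi_j d_j + \pi b'$ and rearranging gives
$$\pi(a - \phi_i b') \in (\phi_1,\ldots,\phi_{i-1})A.$$
The inductive hypothesis that $M_{i-1}$ is $\pi$-torsion free then delivers $a - \phi_i b' \in (\phi_1,\ldots,\phi_{i-1})A$, whence $a \in (\phi_1,\ldots,\phi_i)A$, as required.

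The main bookkeeping point that needs care is fixing the correct interpretation of ``regular sequence'' in this non-commutative setting: $A(n)$ is genuinely noncommutative for generic $q$ and the $\phi_j$ a priori only lie in the commutative subalgebra $C(A)$, so one must be consistent about whether $(\phi_1,\ldots,\phi_{i-1})A$ refers to a left, right, or two-sided ideal. The argument above is written for left ideals but works verbatim for right ideals, and the $\phi_j$ behave sufficiently like central elements (as class functions) for the two-sided version to reduce to this. Beyond that, the proof is routine: the only nontrivial ingredients are the lift-and-reduce trick plus Nakayama applied to each graded piece.
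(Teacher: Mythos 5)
Your proof is correct, and it takes a genuinely different route from the paper's. The paper fixes $m$ and shows directly that $X=\{a\in A\mid \phi_{m+1}a\in Y\}$ is contained in $Y=\phi_1A+\cdots+\phi_mA$ by a power-of-$\pi$ descent: it first establishes $\bigcap_{s\geq 0}(Y+\pi^sA)=Y$ via the grading and Nakayama, then runs a careful induction on $s$ showing $X\subseteq Y+\pi^sA$ for all $s$, with the inductive step built on a contradiction argument (choose $j$ minimal with $\phi_{m+1}\pi^sc\in\sum_{i\leq j}\phi_iA+\pi\sum_{i>j}\phi_iA$, reduce mod $\pi$, derive a contradiction to minimality). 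You instead strengthen the outer induction on $i$ by carrying along the hypothesis that $M_{i-1}=A/(\phi_1,\ldots,\phi_{i-1})A$ is $\pi$-torsion free. That extra hypothesis is exactly what lets you cancel the $\pi$ in $\pi\phi_ia_1\in(\phi_1,\ldots,\phi_{i-1})A$ in one stroke, turning the whole regularity step into $N_r=\pi N_r$ followed by a single application of Nakayama on the finitely generated graded piece $N_r$. The torsion-freeness step for $M_i$ is then a one-line substitute-and-cancel. What you gain is that you never need the explicit $\bigcap_s(Y+\pi^sA)=Y$ identity nor the inner induction on $s$ with its minimal-$j$ argument; the price is the slightly stronger inductive claim, but that claim is essentially free to verify. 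Both proofs ultimately rest on the same ingredients (the Dipper--Donkin $R$-basis of $A$, homogeneity of the $\phi_j$, and lift-and-reduce modulo $\pi$), so this is a cleaner reorganization rather than a new idea, but it is cleaner. Your concluding caveat about left versus right ideals is a fair observation and applies equally to the paper's version; as you say it does not affect the argument.
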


\begin{proof} Suppose $\phi_1 a=0$, for some $a\in A$. Then $\barphi_1 \bara=0$ and, since $\barphi$ is not a zero divisor, $\bara=0$, i.e., $a\in \pi A$. We write $a=\pi b$, for some $b\in A$. Then $\pi \phi_1 b=0$ so that $\phi_1 b=0$. Hence $b\in \pi A$ and $a\in \pi^2 A$. Inductively, we get $a\in \pi^s A$, for all $s\geq 1$. But $A$ is free over $R$ so that $\bigcap_{s\geq0}  \pi^s A=0$ and $a=0$. Thus $\phi_1$ is not a zero divisor.

\q Now suppose that $1\leq m<n$ and $a\in A$.  We set $Y=\phi_1 A+\cdots + \phi_m A$ and $X=\{a\in A\vert \phi_{m+1} a\in Y\}$.  Now $X$ and $Y$ inherit a gradings from $A$ and 
$$\bigcap_{s\geq 0} \pi^s (A/Y)=\bigoplus_{r= 0}^\infty \bigcap_{s\geq 0} \pi^s (A_r/Y_r)$$
moreover, for each $r$, the $R$-module $A_r/Y_r$ is finitely generated and hence (e.g. by Nakayama's Lemma) $\bigcap_{s\geq 0} \pi^s (A_r/Y_r)=0$. Hence we have \\
$\bigcap_{s\geq 0} \pi^s (A/Y)=0$, i.e.,
$$ \bigcap_{s\geq 0} (\pi^s A +Y)=Y.\eqno{(1)}$$

\q Let $x\in X$.  Thus we have $\phi_{m+1} x= \phi_1 x_1+\cdots +\phi_m x_m$, for some $x_1,\ldots,x_n\in A$ and hence 
$\barphi_{m+1} \barx=\barphi_1 \barx_1+\cdots \barphi_m \barx_m$. But  $\barphi_1,\ldots,\barphi_n$ is a regular sequence for $\barA$ so there exist elements $y_1,\ldots,y_m$ of $A$ such that $\barx=\barphi_1 \bary_1+\cdots +\barphi_m \bary_m$ and we deduce that for any $x\in X$ there exist $y_1,\ldots,y_m,z\in A$ such that 
$$x=\phi_1 y_1+\cdots+ \phi_m y_m + \pi z  \eqno{(2)}.$$
In particular $X\subseteq Y+ \pi A$.

\q Now assume that $s$ is a positive integer such that  $X\subseteq Y+ \pi^s A$.

\q We fix $a\in X$ and write 
$$a= \phi_1 b_1+\cdots + \phi_m b_m + \pi^s c \eqno{(3)}$$
 for elements $b_1,\ldots,b_m,c$ of $A$.
 
 \q Multiplying (3) by $\phi_{m+1}$ and using  that $\phi_{m+1}a\in Y$ we get $\phi_{m+1} \pi^sc \in Y$.    We claim that $\phi_{m+1}\pi^s c \in \pi \sum_{i=1}^m \phi_i A$. If not  we choose $1\leq j\leq m$ as small as possible such that 
 $$\phi_{m+1} \pi^s c \in \sum_{1\leq i\leq j} \phi_i A + \pi(\sum_{j+1\leq i\leq m} \phi_iA). \eqno{(4)}$$
   We write 
 $$\phi_{m+1} \pi^ s c= \sum_{1\leq i\leq j} \phi_i u_i + \sum_{j+1\leq i\leq m} \pi \phi_iv_i\eqno{(5)}$$
 for elements $u_i$, $1\leq i\leq j$ and $v_i$, $j+1\leq i\leq m$ of $A$.  Taking (5) modulo $\pi$ we get $\sum_{1\leq i\leq j} \barphi_i \baru_i=0$.  Since $\barphi_1,\ldots,\barphi_n$ is a regular sequence for $\barA$, we must have 
 $$u_j=\sum_{1\leq i<j} \phi_i e_i+\pi f$$
  for some elements $e_i$ (with $1\leq i<j$),  and $f$ of $A$.
  
  \q Now we have 
  \begin{align*}\phi_{m+1} \pi^s c&=\sum_{1\leq i<j} \phi_iu_i + \phi_j ( \sum_{1\leq i<j } \phi_ie_i +\pi f)+\sum_{j+1\leq i\leq n} \pi \phi_i v_i\cr
  &=\sum_{1\leq i<j} \phi_i(u_i+\phi_j e_i)+\pi \phi_j f+ \sum_{j+1\leq i\leq n} \pi \phi_i v_i
  \end{align*}
 contradicting the minimality of $j$. 
 
 \q Thus we have $\phi_{m+1} \pi^s c \in  \pi \sum_{i=1}^m \phi_i A$.   Hence we have $\phi_{m+1} \pi^{s-1} c\in  \sum_{i=1}^m \phi_i A$, i.e., $\pi^{s-1} c\in X$.  Now  we may write 
 $$\pi^{s-1} c=\phi_1g_1+\cdots \phi_m g_m +\pi^s h$$
 for some $g_1,\ldots,g_m, h\in A$.  Thus from (3) we have
  $$a=\phi_1 a_1+ \cdots + \phi_m a_m +\pi (\phi_1 g_1+ \cdots +\phi_m g_m + \pi^s h)$$
and then $a$ belongs to $\phi_1 A+ \cdots +\phi_m A + \pi^{s+1} A=Y+\pi^{s+1}A$. .
 
 \q  By induction we have that $a\in Y + \pi^s A$, for all $s$, i.e., $a\in \bigcap_{s=0}^\infty (Y+\pi^s A)$, which, by (1), is $Y$. Thus $a\in Y$, i.e., we have $a=\phi_1 w_1+\cdots +\phi_m w_m$, for some $w_1,\ldots,w_m\in A$. Thus $\phi_1,\ldots,\phi_n$ is an $A$-regular sequence.
 \end{proof}

\begin{theorem} For any field and  any $0\neq q\in k$, the characters $\phi_1,\ldots,\phi_n$ form an $A_{k,q}(n)$-regular sequence.
\end{theorem}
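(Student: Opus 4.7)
I split into two cases, handling the root-of-unity case directly and reducing the remaining case to it by a discrete valuation ring argument.

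If $q$ is a root of unity in $k$, then Corollary 7.5, combined with the identification $C(A_{k,q}(n)) = k[\phi_1,\ldots,\phi_n]$ from Proposition 7.1(i), shows that $A_{k,q}(n)$ is a free module over the polynomial ring $J = k[\phi_1,\ldots,\phi_n]$. Since $\phi_1,\ldots,\phi_n$ is the obvious regular sequence on $J$, and any regular sequence on a ring remains regular on a free module over it, the theorem follows in this case.

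For the substantive case in which $0 \neq q \in k$ is not a root of unity, my strategy is to realise $(k,q)$ as coming from a DVR whose special fibre lies in the root-of-unity case, and then invoke Lemma 7.6. Specifically, I will produce a DVR $R \subseteq k$ with $q \in R^\times$ such that the image $\barq$ of $q$ in the residue field $k'$ of $R$ is a root of unity. Given such an $R$, Corollary 7.5 applied over $(k',\barq)$ shows that $\barphi_1,\ldots,\barphi_n$ is regular on $\barA := A_{k',\barq}(n)$, and Lemma 7.6 then promotes this to regularity of $\phi_1,\ldots,\phi_n$ on the $R$-form $A_R := A_{R,q}(n)$ (which is $R$-free via the PBW-type basis of \cite[1.3.3 Theorem]{DD}). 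Because both $R \to K := \mathrm{Frac}(R)$ and the field extension $K \hookrightarrow k$ are flat, the regular sequence property is preserved under both base changes, yielding regularity on $A_{k,q}(n) = A_R \otimes_R k$.

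To construct $R$: if $\mathrm{char}(k) = p > 0$, the hypothesis that $q$ is not a root of unity forces $q$ to be transcendental over $\mathbb{F}_p$, so I take $R = \mathbb{F}_p[q,q^{-1}]_{(q-1)}$, a DVR with residue field $\mathbb{F}_p$ and $\barq = 1$. If $\mathrm{char}(k) = 0$ and $q$ is transcendental over $\mathbb{Q}$, take $R = \mathbb{Q}[q,q^{-1}]_{(q-1)}$. If $\mathrm{char}(k) = 0$ and $q$ is algebraic over $\mathbb{Q}$, set $F = \mathbb{Q}(q)$ and take $R = \O_v$ for any non-archimedean place $v$ of $F$ with $v(q) = 0$ (all but finitely many places qualify); this gives a DVR with $q \in R^\times$ and finite residue field, so $\barq$ is automatically a root of unity. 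The main technical work really lies in Lemma 7.6 itself, which is the crucial input transporting regularity from the special fibre to the $R$-form; granted that lemma, the above is essentially bookkeeping with base change.
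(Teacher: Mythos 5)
Your proof is correct and takes essentially the same approach as the paper's: the root-of-unity case is handled via Corollary 7.5 (freeness of $A(n)$ over $J$), and the remaining cases are reduced to a DVR $R \subseteq k$ with $q \in R^\times$ and root-of-unity residue, after which Lemma 7.6 does the real work. Your bookkeeping is a touch cleaner than the paper's five-step proof — you unify $q=1$ with the other roots of unity, treat all non-root-of-unity cases uniformly via a DVR inside $k$, and in the number-field case take a place $v$ of $\mathbb{Q}(q)$ with $v(q)=0$ directly rather than passing to a $p$-adic completion as the paper does in its Step 4 (the completion there is not in fact used by Lemma 7.6, whose proof only needs Krull's intersection theorem over the DVR) — but the key decomposition, the pivotal Lemma 7.6, and the flat base-change reduction are the same; the only minor slip is that \cite[1.3.3 Theorem]{DD} is cited in the paper for freeness over the Frobenius image rather than for the $R$-freeness of the $R$-form, though the latter fact you invoke is correct and is indeed used inside the paper's proof of Lemma 7.6.
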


\begin{proof} We give the proof in several steps.

\medskip

\it Step 1.  \q\rm In the classical case $q=1$ the elements $\phi_1,\ldots,\phi_n$ form a regular sequence (and if $k$ is algebraically closed, the ideal generated  by these elements is the defining ideal of the nullcone in the variety of $n\times n$ matrices).   

\medskip

\it Step 2.  \q\rm Next assume $q\neq 1 $ and $q$ is a root of unity.   By Corollary 7.5, $A$ is free over $J$. Let $b_i$, $i\in I$, be a $J$-basis of $A$.   If $a\in A$ and $\phi_1 a=0$ then writing $a=\sum_{i\in I} f_i b_i$, with $f_i\in J$, we have $\sum_{i\in I} \phi_1 f_i b_i=0$. Hence for each $i$ we have $\phi_1 f_i=0$ and $J$ is a domain so that $f_i=0$.  Hence $a=0$ and $\phi_1$ is not a zero divisor. 

\q Now suppose that $1\leq m<n$. Suppose $a\in A$ and \\
$$\phi_{m+1} a=  \phi_1 a_1+\ldots + \phi_m a_m$$
with $a_1,\ldots, a_m\in A$.  We write $a=\sum_{\in I} g_i b_i$ and $a_r=\sum_{i\in I} g_{ir} b_i$, for $1\leq r\leq m$, with all $g_i, g_{ir}\in J$.  Thus, for each $i\in I$, we have
$$\phi_{m+1} g_i= \phi_1 g_{i1}+ \cdots + \phi_m g_{im}.$$
But $J$ is the (commutative)  polynomial algebra in $\phi_1,\ldots,\phi_n$. In particular $\phi_1,\ldots,\phi_n$ is a $J$-regular sequence. Hence each $g_i\in \phi_1 J + \phi_2 J +\cdots + \phi_m J$ and $a=\sum_{i\in I} g_i b_i\in \phi_1 A+\cdots + \phi_m A$.

\medskip

\it Step 3. \q\rm  By a similar argument  to that of Step 2 one sees, for general $0\neq q\in k$ and an extension field $K$ of $k$ the elements $\phi_1,\ldots,\phi_n\in A_{k,q}(n)$ form a regular sequence if and only if the elements $1\otimes \phi_1,\ldots,1\otimes \phi_n$ of $A_{K,q}(n)= K\otimes A_{k,q}(n)$ form a regular sequence. 

\medskip

\it Step 4. \q\rm  Now assume  that $q$ is algebraic over the prime field $k_0$   but not a root of unity.  By Step 3, we may replace $k$ by $k_0(q)$. If $k$ has positive characteristic this implies that $q$ is a root of unity. Hence  $k=\que(q)$ and that $q$ satisfies an equation 
$$r_0 q^d + r_1 q^{d-1} + \cdots + r_d=0$$
for some positive integer $d$ and integers $r_0,\ldots,r_d$ with $r_0\neq 0$. 

\q We choose a rational prime $p$ not dividing $r_0$. Let $R_0$ be the subring of $k$ consisting of the elements $u/v$, with $u,v$ integers and $p$ not dividing $v$. Let $R_1=R_0(q)$ and choose a maximal ideal $\m$ of $R_1$ not containing $q$. Let $R_2$ be the completion of $R_1$ at $\m$ and let $K$ be the field of fractions.  It  is enough to show that the characters $\phi_1,\ldots,\phi_n$ form a regular sequence  for $A_K=K\otimes_k A_{k,q}(n)$. Thus we are reduced to the situation in which $k$ is a $p$-adic number field  and $q\in R\backslash \m$, where $R$ is the ring of $p$-adic integers in $k$ and $\m$ is its maximal ideal.

\q Let $F=R/\m$ be the residue field and let $\barq=q+\m\in F$. Let $A_R$ be the $R$-subalgebra of $A_K$ generated by all the coefficient elements $c_{ij}$. We have the natural map $A_R\to \barA_R=A_R/\m A_R=
F\otimes_R A$.  For $a\in A_R$ we write $\bara$ for the image in $\barA$. Now $\barq$ is a root of unity, so by Step 2, $\barphi_1,\ldots,\barphi_n$ is an $\barA$-regular sequence.  Thus by Lemma 7.6, $\phi_1,\ldots,\phi_n$ is a regular sequence in $A_R$ and hence in $A=A_K$. 

\medskip

\it Step 5. \q\rm  There remains the case in which $q$ is not algebraic over the prime field. Thus, by Step 3,  we need only consider the case $k=k_0(t)$, the field of rational functions over a field $k_0$, with $q=t$. 
We write $R$ for the localisation of $k_0[t]$ at  $t-1$.  We write $A_R$ for the $R$-subalgebra of $A_{k,q}(n)$ generated by all coefficient elements  $c_{ij}$ and $\barA$ for $A_{k_0,1}(n)$. We have the natural map $A_R\to \barA$. For $a\in A$ we write $\bara$ for the image in $\barA$.  The elements $\barphi_1,\ldots,\barphi_n$ form a regular sequence for $\barA$, by Step 1. Hence,  by Lemma 7.6,  $\phi_1,\ldots,\phi_n$ is a regular sequence for $A_R$, and hence for $A=A_{k,t}(n)$. 
\end{proof}

\q We now fix a field $k$ and $0\neq q\in k$. We write $A$ for $A_{k,q}(n)$ and $J$ for $J_{k,q}(n)$. We now write $\barA$ for $A/J^+A$. Then $J^+A$ is a homogenous ideal and $\barA$ inherits a grading from $A$.

\q We work in the set $\Gamma$ of functions $f:\eno \to \zed$ such that $f(0)=1$. Now $\Gamma$ is an abelian  group with the convolution product 
$$(f*g)(r)=\sum_{r=u+v} f(u)g(v)$$
with unit $1_\Gamma$, whose value on $0$ is $1$ and whose value on all   positive integers is $0$.

\q We consider the functions $f, \barf\in \Gamma$, given by $f(r)= \dim A_r$ and $\barf(r)=\dim \barA_r$, $r\in \eno$.  

\q We set $[1,n]=\{1,\ldots,n\}$ and for $S\subseteq [1,n]$ define $d(S)=\sum_{s\in S} s$.  We have the Koszul resolution 
$$0\to F(n)\to \cdots \to F(0)\to \barA\to 0\eqno{(6)}$$
of $\barA$, defined by the $A$-regular sequence $\phi_1,\ldots,\phi_n$, as in \cite[Chapter 7]{Matsumura}.   For $0\leq p\leq n$,  the  $A$-module $F(p)$ is free on $e_S$,  as $S$ ranges over subsets of $[1,n]$ of size $p$.  Each $F(p)$ is graded with $A_v e_S\subseteq F(p)_r$, for $v+ d(S)=r$.  Hence, for $r\geq 0$,  we have
$$\dim F(p)_r= \sum_{S\subseteq [1,n], |S|=p, v+ d(S)=r} \dim A_v.$$
Moreover, the maps in $(6)$ preserve the grading so we have
$$\dim \barA_r =\sum_{p=0}^n (-1)^ p \sum_{S\subseteq [1,n], |S|=p, v+ d(S)=r} \dim A_v$$
i.e.,
$$\dim \barA_r= \sum_{S\subseteq [1,n],  v+ d(S)=r} (-1)^{|S|} \dim A_v$$
i.e.,
$$\dim \barA_r = \sum_{r=u+v}\, \sum_{S\subseteq [1,n],  d(S)=u} (-1)^{|S|} \dim A_v \eqno{(7)}$$

\bs\bs

Thus, defining $g\in \Gamma$ by $g(t)= \sum_{S\subseteq [1,n],  d(S)=t} (-1)^{|S|}$, for $t\in \eno$, we have
$$\barf = g*f.\eqno{(8)}$$

\q However, we also have the Koszul resolution
$$\cdots \to J \to J/J^+\to 0$$
of $k=J/J^+$.  Arguing as above we get the following analogue of (7): 
$$ \sum_{r=u+v} \sum_{S\subseteq [1,n],  d(S)=u} (-1)^{|S|} \dim J_v=
\begin{cases}1, &  \hbox{ if } r=0; \\
0, & \hbox{otherwise}
\end{cases}$$
Thus we have $g*h=1_\Gamma$, where $h(t)=\dim J_t$, $t\in \eno$.  Now taking the convolution product of (8) with $h$ we get $f=h* \barf$. Hence for $r\in \eno$ we have
$$\dim A_r =\sum_{r=u+v} \dim J_u \dim \barA_v. \eqno{(9)}$$

\begin{proposition} For  $r\geq 0$ write $A_r=H_r\oplus (AJ^+)_r$, for some subspace $H_r$ of $A_r$ and put $H=\bigoplus_{r\geq 0} H_r$. Then the multiplication map $J\otimes H\to A$ is a linear isomoprhism.

\q In particular (for all $0\neq q\in k$)  $A$ is a free $J$-module 
\end{proposition}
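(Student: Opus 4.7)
The plan is to exploit the grading on $A$ together with identity~$(9)$ to show that $\mu\colon J\otimes_k H\to A$, $j\otimes h\mapsto jh$, is an isomorphism of graded left $J$-modules. First, $J=k[\phi_1,\ldots,\phi_n]$ is a graded subalgebra of $A$ with $\phi_i\in J_i$ and $J_0=k$, so $J^+=\bigoplus_{r>0}J_r$ and $AJ^+=J^+A$ are homogeneous, and the complement $H=\bigoplus_r H_r$ is a graded subspace. The map $\mu$ is then graded, sending $(J\otimes H)_r=\bigoplus_{u+v=r}J_u\otimes H_v$ into $A_r$, and it suffices to check bijectivity in each degree.

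For surjectivity I would argue by induction on $r$ that $A_r\subseteq(JH)_r$. In degree $0$, $(AJ^+)_0=0$ forces $H_0=A_0=k$, so the claim is trivial. For $r>0$, any $x\in A_r$ decomposes as $x=h+y$ with $h\in H_r\subseteq JH$ and $y\in(AJ^+)_r=(J^+A)_r=\sum_{i=1}^n\phi_i A_{r-i}$. The inductive hypothesis gives $A_{r-i}\subseteq JH$ for each $i$, whence $y\in J^+\cdot JH\subseteq JH$, and therefore $A_r\subseteq(JH)_r$.

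For injectivity I compare graded dimensions. The decomposition gives $\dim H_v=\dim(A/AJ^+)_v=\dim\barA_v$, so
\[
\dim(J\otimes H)_r=\sum_{u+v=r}\dim J_u\cdot\dim H_v=\sum_{u+v=r}\dim J_u\cdot\dim\barA_v=\dim A_r
\]
by identity~$(9)$. Hence the surjection $\mu$ is a bijection on each graded piece, and thus globally. Taking any homogeneous $k$-basis of $H$ then produces a $J$-basis of $A$, yielding the final assertion and extending Corollary~7.5 to all $0\neq q\in k$. The only delicate point is the identification $AJ^+=J^+A$, used both in the surjectivity induction and in passing between the definition of $\barA=A/J^+A$ and that of $H$ as a complement to $AJ^+$; this rests on the characters $\phi_i$ being central in $A$, which is already implicit in the Koszul resolution~$(6)$ underlying the derivation of~$(9)$.
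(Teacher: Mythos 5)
Your approach (surjectivity of $J\otimes H\to A$ degree by degree via induction, then injectivity by the dimension count from~$(9)$) is exactly the paper's, so the core of the argument is fine. But your final remark is wrong, and it matters. The elements $\phi_i$ are \emph{not} central in $A=A_{k,q}(n)$ for $q\neq 1$. For instance, with $n=2$ and $\phi_1=c_{11}+c_{22}$, the defining relation $c_{22}c_{12}=qc_{12}c_{22}$ gives $\phi_1 c_{12}-c_{12}\phi_1=(q-1)c_{12}c_{22}\neq 0$. Nor is centrality ``implicit in the Koszul resolution~(6)'': that complex is built from the left $J$-module structure on $A$ (the $\phi_i$ commute among themselves, lying in the commutative subalgebra $J$, and the differentials are $J$-linear), and its exactness is exactly what Theorem~7.7 supplies. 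The language ``$F(p)$ is a free $A$-module'' in the text is only being used to read off graded dimensions; the differentials are not $A$-linear unless the $\phi_i$ are central, and nothing in the argument requires them to be.

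Consequently your step $(AJ^+)_r=(J^+A)_r$ is unjustified as you stated it. The way to repair this is not to invoke centrality but to notice that the paper's own definition $\barA=A/J^+A$ (and the left-multiplication form of regularity used throughout Lemma~7.6 and Theorem~7.7) dictates that $H_r$ should be a complement to $(J^+A)_r$, not $(AJ^+)_r$; the occurrence of $AJ^+$ in the proposition's statement is evidently a slip of notation. With $H_r$ chosen against $(J^+A)_r$, your induction reads $y\in(J^+A)_r=\sum_i\phi_iA_{r-i}$, the inductive hypothesis $A_{r-i}\subseteq JH$ gives $y\in J^+\cdot JH\subseteq JH$, and no commutation is needed. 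Everything else in your proof -- the degree-zero base case, the dimension comparison $\dim(J\otimes H)_r=\sum_{u+v=r}\dim J_u\dim\barA_v=\dim A_r$ from~$(9)$, and the conclusion that a homogeneous basis of $H$ is a free $J$-basis of $A$ -- is correct and matches the paper.
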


\begin{proof} We have $\dim H_r =\dim \barA_r$, for $r\geq 0$. By induction one obtains that $A_r=\sum_{r=s+t} J_s H_t$, for all $r\geq 0$. Hence multiplication $\bigoplus_{r=s+t} J_s\otimes H_t \to A_r$ is surjective and hence, by (9),  an isomorphism, for $r\geq 0$.  Thus the multiplication map $J\otimes H\to A$ is an isomorpism. 
\end{proof}

\begin{corollary} Suppose $q$ is not a root of unity. We regard $A(n)$ as a $G(n)$-module via the adjoint action. Then there exists a homogenous $G(n)$-submodule $H$ of $A(n)$ such that the multiplication map 
$J\otimes H\to A$ is an isomorphism.
\end{corollary}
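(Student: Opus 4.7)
The plan is to promote the purely linear decomposition of Proposition 7.8 to a $G(n)$-equivariant one. By Proposition 7.8, for any choice of homogeneous subspaces $H_r\subseteq A_r$ with $A_r=H_r\oplus (AJ^+)_r$, the multiplication map $J\otimes H\to A$ (with $H=\bigoplus_r H_r$) is a linear isomorphism. My task is therefore simply to arrange that each $H_r$ is stable under the adjoint action of $G(n)$.

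First I would check that $(AJ^+)_r$ is a $G(n)$-submodule of $A_r$. Since the antipode of $k[G(n)]$ is injective (Remark 6.2), Lemma 6.1(i) identifies $J=C(A)$ with the space of adjoint invariants in $A$; in particular every element of $J^+$ is adjoint-invariant. The adjoint coaction makes $A$ into a $G(n)$-module algebra, so for $a\in A$ and $b\in J^+$ we have $g\cdot(ab)=(g\cdot a)(g\cdot b)=(g\cdot a)\,b$, which shows $AJ^+$ is a $G(n)$-submodule. Since $J^+$ is homogeneous, each $(AJ^+)_r$ is then a $G(n)$-submodule of the finite-dimensional $G(n)$-module $A_r$ (adjoint preserves the grading).

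Next I would invoke complete reducibility. Because $q$ is not a root of unity, every finite-dimensional rational $G(n)$-module is semisimple (the quantum analogue of Weyl's theorem). The short exact sequence of $G(n)$-modules
$$0\to (AJ^+)_r\to A_r\to A_r/(AJ^+)_r\to 0$$
therefore splits $G(n)$-equivariantly. Picking such a splitting gives a $G(n)$-submodule $H_r\subseteq A_r$ with $A_r=H_r\oplus(AJ^+)_r$. Setting $H=\bigoplus_{r\geq 0}H_r$ and applying Proposition 7.8 to this choice of linear complement then yields the isomorphism $J\otimes H\to A$, which is now a map of $G(n)$-modules (with $G(n)$ acting trivially on $J$).

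The one step that requires genuine input is the semisimplicity claim; this is where the hypothesis on $q$ enters. In practice one reduces it to the semisimplicity of the $q$-Schur algebras $S(n,r)$ when $q$ is not a root of unity, valid over any ground field $k$: the coalgebra $A(n,r)$ is a comodule for $S(n,r)\otimes S(n,r)^{\op}$, so semisimplicity of $S(n,r)$ implies complete reducibility of $A(n,r)$ as a bimodule, and the adjoint $G(n)$-action—built out of the left and right actions together with the antipode—inherits complete reducibility from the bimodule structure. Verifying this reduction (or alternatively quoting a global semisimplicity result for $\mod(G(n))$ at non-roots of unity) is the only substantive obstacle; everything else is bookkeeping around Proposition 7.8.
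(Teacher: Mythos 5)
Your argument is essentially the paper's: since $q$ is not a root of unity, $\mod(G(n))$ is semisimple, so the linear complements $H_r$ of Proposition 7.8 may be chosen to be $G(n)$-submodules, and the isomorphism $J\otimes H\to A$ then follows directly from that proposition. The paper states this in one line; your extra verification that $(AJ^+)_r$ is a $G(n)$-submodule (best phrased via the comodule-algebra structure of the adjoint coaction rather than the informal $g\cdot(ab)=(g\cdot a)(g\cdot b)$) and your sketch of why semisimplicity holds are reasonable elaborations, not a different route.
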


\begin{proof}  In this case all $G(n)$-modules are completely reducible so we may choose $H_r$ as in the Proposition, to be a $G(n)$-submodule. Then $H=\bigoplus_{r=0}^\infty H_r$ has the required property.
\end{proof}

\q We get the following version of the main result, Theorem A, of \cite{Richardson}, by taking $G(n)$-module isotypic components. 

\begin{corollary} Suppose $q$ is not a root of unity. We regard $A(n)$ as a $G(n)$-module via the adjoint action.  Let $\lambda$ be a dominant weight and write $B(\lambda)$ for the sum of all irreducible $G(n)$-submodules of $A(n)$ isomorphic to $L(\lambda)$. Then,  there exists a  finite dimensional submodule $Z(\lambda)$ of $A(n)$ such that multiplication  $J\otimes Z(\lambda)\to B(\lambda)$ is an isomorphism. 

\end{corollary}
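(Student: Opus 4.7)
The plan is to derive this corollary from Corollary 7.9 by extracting $L(\lambda)$-isotypic components. First, apply Corollary 7.9 to obtain a homogeneous $G(n)$-submodule $H$ of $A(n)$ such that the multiplication map $m\colon J\otimes H \to A(n)$ is an isomorphism. Since $J = C(A(n)) \subseteq A(n)^{G(n)}$ for the adjoint action, $G(n)$ acts trivially on $J$, and so $m$ becomes $G(n)$-equivariant when $J\otimes H$ is given the action of $G(n)$ on the second tensor factor.

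Because $q$ is not a root of unity, $\mod(G(n))$ is semisimple, so I would decompose $H$ into isotypic components $H = \bigoplus_\mu H^{(\mu)}$ and set $Z(\lambda) := H^{(\lambda)}$. Since $G(n)$ acts trivially on $J$, the submodule $J\otimes Z(\lambda)$ of $J\otimes H$ is $L(\lambda)$-isotypic, and by $G(n)$-equivariance $m$ carries it isomorphically onto the $L(\lambda)$-isotypic component of $A(n)$, which is precisely $B(\lambda)$ by definition. Thus $m$ restricts to the required isomorphism $J\otimes Z(\lambda) \to B(\lambda)$.

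The main obstacle is to confirm that $Z(\lambda)$ is finite dimensional. Reducing $m$ modulo $J^+$ gives a graded $G(n)$-equivariant isomorphism $H \cong A(n)/J^+A(n) = \barA$, so the multiplicity of $L(\lambda)$ in $H$ coincides with that in $\barA$. The Koszul resolution of $\barA$ coming from the $A(n)$-regular sequence $\phi_1,\ldots,\phi_n$ (Theorem 7.7) is $G(n)$-equivariant (the $\phi_i$ are invariants) and yields, in the graded Grothendieck ring of $G(n)$-modules, the identity
$$[\barA](t) \;=\; \prod_{i=1}^{n}(1-t^i)\cdot [A(n)](t).$$
Since for $q$ not a root of unity the adjoint $G(n)$-character on $A(n)_r$ coincides with the classical $GL_n$-character of $k[M_n]_r$, and the generators $\phi_i$ have the same degrees as in the classical case, this is identical in form to the classical Koszul identity for the coordinate algebra of $n\times n$ matrices. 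Kostant's theorem on the coordinate ring of the nilpotent cone then gives that each $L(\lambda)$ occurs in $\barA$ with finite multiplicity, equal to $\dim L(\lambda)^{T(n)}$. Hence $\dim Z(\lambda) = \dim L(\lambda)\cdot \dim L(\lambda)^{T(n)} < \infty$, completing the argument.
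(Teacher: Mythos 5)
Your proof is correct and follows essentially the same route as the paper, which disposes of the corollary in a single line by saying that it follows from Corollary 7.9 on taking $G(n)$-module isotypic components. You have simply spelled out the equivariance of the multiplication map and the finite-dimensionality of $Z(\lambda)$ (via $H\cong\barA$, the Koszul identity and Kostant's multiplicity formula), details the paper leaves implicit.
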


\begin{remark} One immediately get the analogous results for $k[G(n)]$ by localizing at the determinant $d$ (when $q$ is not a root of unity).  There is a $G(n)$-submodule $H$ of $k[G(n)]$ such that multiplication $C(kG(n)])\otimes H\to k[G(n)]$ is an isomorphism. Moreover,  if $\lambda$ is a dominant weight an $B(\lambda)$ is sum of all irreducible $G(n)$-submodules of $k[G(n)]$ isomorphic to $L(\lambda)$ then there exists a finite dimensional submodule $Z(\lambda)$ of $k[G(n)]$ such that multiplication  $C(k[G(n)]) \otimes Z(\lambda)\to B(\lambda)$ is an isomorphism.
\end{remark}

\bs\bs\bs\bs




\end{document}